\tikzset{->-/.style={decoration={
  markings,
  mark=at position #1 with {\arrow{>}}},postaction={decorate}}}
\newtheorem{thm}{Theorem}[section]
\newtheorem{claim}[thm]{Claim}
\newtheorem{cor}[thm]{Corollary}
\newtheorem{lem}[thm]{Lemma}
\newtheorem*{thm1.1}{Theorem 1.1}
\newtheorem*{thm1.2}{Theorem 1.2}
\newtheorem*{thm1.3}{Theorem 1.3}
\numberwithin{equation}{section}
  \newcommand{\A}{{\mathcal{A}}}
  \newcommand{\E}{{\mathcal{E}}}
  \newcommand{\F}{{\mathcal{F}}}
  \newcommand{\G}{{\mathcal{G}}}
  \newcommand{\W}{{\mathcal{W}}}
  \newcommand{\X}{{\mathcal{X}}}
\begin{document}

\title{A constructive characterisation of circuits in the simple $(2,1)$-sparse matroid}
\author[T. McCourt]{T.A. McCourt}
\address{School of Computing, Electronics and Mathematics\\ Plymouth University\\ Plymouth\\ PL4 8AA\\ U.K. }
\email{tom.a.mccourt@gmail.com}
\author[A. Nixon]{A. Nixon}
\address{Department of Mathematics and Statistics\\ Lancaster University\\ Lancaster\\
LA1 4YF \\ U.K. }
\email{a.nixon@lancaster.ac.uk}
\begin{abstract}
A simple graph $G=(V,E)$ is a $(2,1)$-circuit if $|E|=2|V|$ and $|E(H)|\leq 2|V(H)|-1$ for every proper subgraph $H$ of $G$. Motivated, in part, by ongoing work to understand unique realisations of graphs on surfaces, we derive a constructive characterisation of $(2,1)$-circuits. The characterisation uses the well known 1-extension and $X$-replacement operations as well as several summation moves to glue together $(2,1)$-circuits over small cutsets.
\end{abstract}

\maketitle

\section{Introduction}

For a finite (multi)graph $G=(V,E)$ let $i_G(X)$ denote the number of edges induced by $X\subseteq V$. The graph $G$ is \emph{$(k,\ell)$-tight} if $|E|=k|V|-\ell$ and $i_G(X)\leq k|V|-\ell$ for all $X\subseteq V$. For $\ell <2k$ the (edge sets of) $(k,\ell)$-tight graphs form the bases of a matroid \cite{L&S,Whi3}. These matroids are examples of count matroids (see \cite{Fra}); we refer to them as the $(k,\ell)$-sparse matroid, and when loops and multiple edges are prohibited, as the simple $(k,\ell)$-sparse matroid.

A simple (respectively multi-)graph $G$ is a \emph{$(k,\ell)$-circuit} if $G$ is the graph induced by a circuit in the simple $(k,\ell)$-sparse matroid (respectively the $(k,\ell)$-sparse matroid). Equivalently a simple (respectively multi-)graph $G$ is a $(k,\ell)$-circuit if $|E|=k|V|-\ell+1$ and $i_G(X)\leq k|V|-\ell$ for all $X\subsetneq V$.

A constructive characterisation of a class of graphs is a method of building all graphs in the class from certain base graphs by elementary operations. 
Constructive characterisations of classes of graphs (or other objects) from small base cases by elementary local transformations are natural and relevant to a variety of problems. In particular in the field of combinatorial optimization \cite{K&V}. As particular motivation for us we seek a means to understand when a generic realisation of a graph on a surface is globally rigid (unique up to isometries), see \cite{JMN,JNstress} for details on the geometry of this problem. A related construction for circuits in the $(2,3)$-sparse matroid \cite{B&J} was a vital aspect of the characterisation of global rigidity in the plane \cite{J&J} and we expect that the construction here, along with the construction for circuits in the simple $(2,2)$-sparse matroid, will be crucial in establishing analogues for global rigidity on surfaces supporting either two (e.g. the cylinder) or one (e.g. the cone or torus) tangentially acting isometries. See \cite[Conjecture $5.7$]{Nix} and \cite[Conjecture $9.1$]{JMN}. 

Constructive characterisations of $(k,\ell)$-tight (multi)graphs are known when $k=\ell$ \cite{Tay} and more generally when $l\leq k$ \cite{F&S}. 
In the case of simple graphs each intermediate graph in the recursive construction needs to be simple; this necessitates establishing new constructions.
When $k=2$ constructions are known for several classes \cite{N&O,NOP,NOP2}.

For $(k,\ell)$-circuits less is known, however some general constructions, such as when $k=\ell$ can be extracted from work on tree packings \cite{Fr&S}. In this paper we are interested in simple $(k,\ell)$-circuits. For $k=1$ the problem is easy. When $k=2$, and $\ell=3$, there is an elegant result of Berg and Jord\'{a}n \cite{B&J}. It is easy to check that the minimum degree, $\delta(G)$, in a $(2,3)$-circuit $G$ is equal to three. They proved first that every $(2,3)$-circuit which was 3-connected contains an \emph{admissible} degree three vertex, that is a degree three vertex which can be removed and an edge added between its neighbours in such a way that the resulting graph is a $(2,3)$-circuit. To complete their constructive characterisation they observed that whenever $G$ was not 3-connected there is a 2-separation $\{x,y\}$ and $xy\notin E$. Such a graph can be broken into two smaller $(2,3)$-circuits by separating over this cut and adding the edge $xy$ to both parts. (This is the inverse of the well known 2-sum operation.)

For $(2,2)$-circuits one must distinguish between the multigraph and simple graph cases. For multigraphs a constructive characterisation occurs as a special case of a characterisation of highly $k$-tree connected graphs \cite{Fr&S}. For simple graphs there is a constructive characterisation in \cite{Nix}. It is a nontrivial extension of Berg and Jord\'{a}n's result in the following senses: firstly since $K_4$ can occur as a subgraph of any $(2,2)$-circuit, it is non-trivial to preserve simplicity; secondly the connectivity level required to guarantee admissibility is higher (requiring 3-connectivity and essential 4-edge-connectivity); and thirdly when these connectivity assumptions do not hold there are several separation moves required.

In this paper we consider simple $(2,1)$-circuits and in particular, we prove a constructive characterisation of such circuits where every intermediate graph is also simple. 
 At a high level the scheme is analogous to the above cases: we establish the connectivity level required to guarantee an admissible vertex and when this fails we define separation moves to pull apart $(2,1)$-circuits into smaller $(2,1)$-circuits. 
However, in the $(2,1)$-circuit case there are numerous complications to previous cases.
 Firstly observe that $(2,1)$-circuits need not have $\delta(G)=3$, thus we are forced to consider a degree four operation. Secondly since $K_4$ is not a base in the simple $(2,1)$-sparse matroid we separate out two cases in our result guaranteeing an admissible vertex of degree three. 
Thirdly, due to the connectivity level required to guarantee an admissible node, there are six different separations that can occur.

\subsection{Terminology}

We now define the set $\G$ of base graphs for our constructive characterisation.
Note first that $K_5$ is the unique simple $(2,1)$-circuit on at most five vertices. There are five distinct graphs formed from deleting three edges from $K_6$. Deleting a one factor leaves a 4-regular graph which has an admissible vertex by Theorem \ref{thm:4reg}. Deleting a degree three star does not give a $(2,1)$-circuit. The other three possibilities are the complements of $G57,G59$ and $G60$ as listed in \cite{R&W} (see also Figure \ref{fig:3fromK6}). 

\begin{figure}
\begin{center}
\begin{tikzpicture}[scale=1, vertex/.style={circle,inner sep=2,fill=black,draw}, vertex2/.style={circle,inner sep=4,fill=black,draw}]

\coordinate (v1) at (0,0);
\coordinate (v2) at (1,0);
\coordinate (v3) at (2,0.5);
\coordinate (v4) at (0,1);
\coordinate (v5) at (1.25,1.25);
\coordinate (v6) at (0.5,2);

\node at (v1) [vertex]{};
\node at (v2) [vertex]{};
\node at (v3) [vertex]{};
\node at (v4) [vertex]{};
\node at (v5) [vertex]{};
\node at (v6) [vertex]{};

\draw (v1) -- (v2);
\draw (v1) -- (v5);
\draw (v1) -- (v4);
\draw (v2) -- (v3);
\draw (v2) -- (v4);
\draw (v2) -- (v5);
\draw (v3) -- (v5);
\draw (v4) -- (v5);
\draw (v4) -- (v6);
\draw (v5) -- (v6);
\draw (v2) -- (v6);
\draw (v3) -- (v4);

\node at (1,0) [label=south:$\overline{G57}$]{};

\coordinate (v7) at (3.5,0);
\coordinate (v8) at (4.5,0);
\coordinate (v9) at (3.5,1);
\coordinate (v10) at (4.5,1);
\coordinate (v11) at (3.5,2);
\coordinate (v12) at (4.5,2);

\node at (v7) [vertex]{};
\node at (v8) [vertex]{};
\node at (v9) [vertex]{};
\node at (v10) [vertex]{};
\node at (v11) [vertex]{};
\node at (v12) [vertex]{};

\draw (v7) -- (v11);
\draw (v8) -- (v12);
\draw (v7) -- (v8);
\draw (v9) -- (v10);
\draw (v11) -- (v12);
\draw (v11) -- (v10);
\draw (v9) -- (v8);
\draw (v7) -- (v10);
\draw (v9) -- (v12);

\draw[bend left=30] (v7) edge (v11);

\node at (4,0) [label=south:$\overline{G59}$]{};

\coordinate (v13) at (6,0);
\coordinate (v14) at (7,0);
\coordinate (v15) at (6,1);
\coordinate (v16) at (7,1);
\coordinate (v17) at (6.5,1.5);
\coordinate (v18) at (6.5,2);

\node at (v13) [vertex]{};
\node at (v14) [vertex]{};
\node at (v15) [vertex]{};
\node at (v16) [vertex]{};
\node at (v17) [vertex]{};
\node at (v18) [vertex]{};

\draw (v13) -- (v14);
\draw (v13) -- (v15);
\draw (v13) -- (v16);
\draw (v14) -- (v15);
\draw (v14) -- (v16);
\draw (v15) -- (v17);
\draw (v15) -- (v18);
\draw (v16) -- (v17);
\draw (v16) -- (v18);
\draw (v18) -- (v17);

\draw[bend left=50] (v13) edge (v18);
\draw[bend right=50] (v14) edge (v18);

\node at (6.5,0) [label=south:$\overline{G60}$]{};

\end{tikzpicture}
\end{center}
\caption{$\overline{G57}$ and $\overline{G59}$ are base graphs on six vertices. $\overline{G60}$ can be reduced to $K_5$ by a 1-reduction.}
\label{fig:3fromK6}
\end{figure}
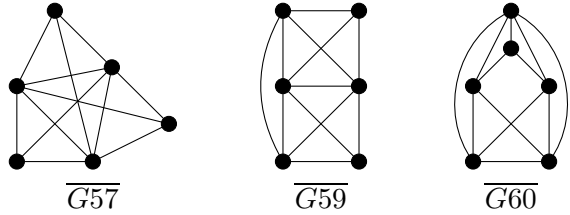

Examining the list of small graphs in \cite{R&W}, there are 65 non-isomorphic graphs formed from deleting seven edges from $K_7$; of these 34 are $(2,1)$-circuits.
Two of these are 4-regular and hence contain an admissible node by Theorem \ref{thm:4reg}.
Of the remainder note that if there is a node not in a copy of $K_4$ then either adding a missing edge creates a $K_5$ subgraph or that node is admissible. This observation allows us to easily spot that 29 of the remaining $(2,1)$-circuits contain an admissible node. The remaining three do not contain admissible nodes; they are the complements of $G293,G308$ and $G312$ as listed in \cite{R&W} (see also Figure \ref{fig:7fromK7}).

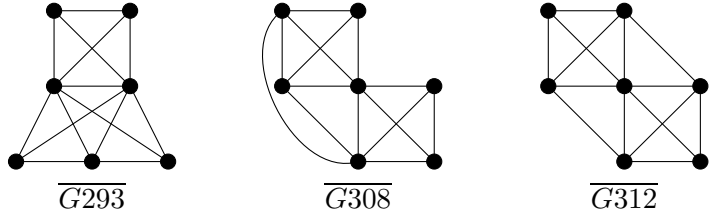
\begin{figure}
\begin{center}
\begin{tikzpicture}[scale=1, vertex/.style={circle,inner sep=2,fill=black,draw}, vertex2/.style={circle,inner sep=4,fill=black,draw}]

\coordinate (v1) at (0,0);
\coordinate (v2) at (1,0);
\coordinate (v3) at (2,0);
\coordinate (v4) at (0.5,1);
\coordinate (v5) at (1.5,1);
\coordinate (v6) at (0.5,2);
\coordinate (v7) at (1.5,2);

\node at (v1) [vertex]{};
\node at (v2) [vertex]{};
\node at (v3) [vertex]{};
\node at (v4) [vertex]{};
\node at (v5) [vertex]{};
\node at (v6) [vertex]{};
\node at (v7) [vertex]{};

\draw (v1) -- (v2);
\draw (v1) -- (v4);
\draw (v1) -- (v5);
\draw (v2) -- (v3);
\draw (v2) -- (v4);
\draw (v2) -- (v5);
\draw (v3) -- (v4);
\draw (v3) -- (v5);
\draw (v4) -- (v5);
\draw (v4) -- (v6);
\draw (v4) -- (v7);
\draw (v5) -- (v7);
\draw (v5) -- (v6);
\draw (v6) -- (v7);

\node at (1,0) [label=south:$\overline{G293}$]{};

\coordinate (v8) at (4.5,0);
\coordinate (v9) at (5.5,0);
\coordinate (v10) at (3.5,1);
\coordinate (v11) at (4.5,1);
\coordinate (v12) at (5.5,1);
\coordinate (v13) at (3.5,2);
\coordinate (v14) at (4.5,2);

\node at (v8) [vertex]{};
\node at (v9) [vertex]{};
\node at (v10) [vertex]{};
\node at (v11) [vertex]{};
\node at (v12) [vertex]{};
\node at (v13) [vertex]{};
\node at (v14) [vertex]{};

\draw (v8) -- (v9);
\draw (v8) -- (v10);
\draw (v8) -- (v11);
\draw (v8) -- (v12);
\draw (v9) -- (v11);
\draw (v9) -- (v12);
\draw (v10) -- (v11);
\draw (v10) -- (v13);
\draw (v10) -- (v14);
\draw (v11) -- (v12);
\draw (v11) -- (v13);
\draw (v11) -- (v14);
\draw (v13) -- (v14);

\draw[bend left=80] (v8) edge (v13);

\node at (4.5,0) [label=south:$\overline{G308}$]{};

\coordinate (v15) at (8,0);
\coordinate (v16) at (9,0);
\coordinate (v17) at (8,1);
\coordinate (v18) at (9,1);
\coordinate (v19) at (7,1);
\coordinate (v20) at (7,2);
\coordinate (v21) at (8,2);

\node at (v15) [vertex]{};
\node at (v16) [vertex]{};
\node at (v17) [vertex]{};
\node at (v18) [vertex]{};
\node at (v19) [vertex]{};
\node at (v20) [vertex]{};
\node at (v21) [vertex]{};

\draw (v20) -- (v16);
\draw (v15) -- (v19);
\draw (v19) -- (v21);
\draw (v19) -- (v18);
\draw (v18) -- (v16);
\draw (v21) -- (v15);
\draw (v21) -- (v20);
\draw (v20) -- (v19);
\draw (v15) -- (v16);
\draw (v18) -- (v21);
\draw (v18) -- (v15);

\node at (8,0) [label=south:$\overline{G312}$]{};

\end{tikzpicture}
\end{center}
\caption{Base graphs on seven vertices.}
\label{fig:7fromK7}
\end{figure}

A further five base graphs of orders eight and nine arise in the proof of the recursive construction; there are four on eight vertices (graphs $S_1$, $S_2$, $S_3$ and $S_4$ in Figure \ref{fig:S234}) and there is one on nine vertices (graph $S_5$ in Figure \ref{fig:base9}).

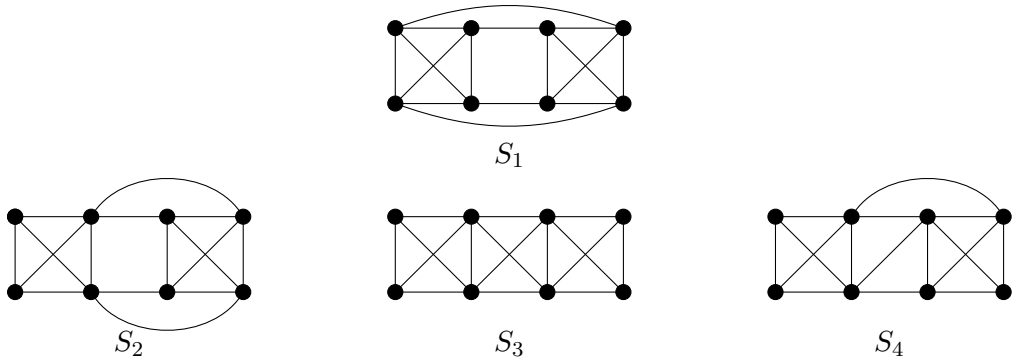
\begin{figure}
\begin{center}
\begin{tikzpicture}[scale=1, vertex/.style={circle,inner sep=2,fill=black,draw}, vertex2/.style={circle,inner sep=4,fill=black,draw}]

\coordinate (v1) at (0,0);
\coordinate (v2) at (1,0);
\coordinate (v3) at (2,0);
\coordinate (v4) at (3,0);
\coordinate (v5) at (0,1);
\coordinate (v6) at (1,1);
\coordinate (v7) at (2,1);
\coordinate (v8) at (3,1);

\node at (v1) [vertex]{};
\node at (v2) [vertex]{};
\node at (v3) [vertex]{};
\node at (v4) [vertex]{};
\node at (v5) [vertex]{};
\node at (v6) [vertex]{};
\node at (v7) [vertex]{};
\node at (v8) [vertex]{};

\draw (v1) -- (v4);
\draw (v5) -- (v8);
\draw (v1) -- (v5);
\draw (v2) -- (v6);
\draw (v3) -- (v7);
\draw (v4) -- (v8);
\draw (v1) -- (v6);
\draw (v2) -- (v5);
\draw (v3) -- (v8);
\draw (v4) -- (v7);

\draw[bend right=60] (v2) edge (v4);

\draw[bend left=60] (v6) edge (v8);

\node at (1.5,-0.25)[label=south:$S_2$]{};


\coordinate (v9) at (5,0);
\coordinate (v10) at (6,0);
\coordinate (v11) at (7,0);
\coordinate (v12) at (8,0);
\coordinate (v13) at (5,1);
\coordinate (v14) at (6,1);
\coordinate (v15) at (7,1);
\coordinate (v16) at (8,1);

\node at (v9) [vertex]{};
\node at (v10) [vertex]{};
\node at (v11) [vertex]{};
\node at (v12) [vertex]{};
\node at (v13) [vertex]{};
\node at (v14) [vertex]{};
\node at (v15) [vertex]{};
\node at (v16) [vertex]{};

\draw (v9) -- (v12);
\draw (v13) -- (v16);
\draw (v9) -- (v13);
\draw (v10) -- (v14);
\draw (v11) -- (v15);
\draw (v12) -- (v16);
\draw (v9) -- (v14);
\draw (v10) -- (v13);
\draw (v11) -- (v16);
\draw (v12) -- (v15);

\draw (v11) -- (v14);
\draw (v10) -- (v15);

\node at (6.5,-0.25)[label=south:$S_3$]{};

\coordinate (v17) at (10,0);
\coordinate (v18) at (11,0);
\coordinate (v19) at (12,0);
\coordinate (v20) at (13,0);
\coordinate (v21) at (10,1);
\coordinate (v22) at (11,1);
\coordinate (v23) at (12,1);
\coordinate (v24) at (13,1);

\node at (v17) [vertex]{};
\node at (v18) [vertex]{};
\node at (v19) [vertex]{};
\node at (v20) [vertex]{};
\node at (v21) [vertex]{};
\node at (v22) [vertex]{};
\node at (v23) [vertex]{};
\node at (v24) [vertex]{};

\draw (v17) -- (v20);
\draw (v21) -- (v24);
\draw (v17) -- (v21);
\draw (v18) -- (v22);
\draw (v19) -- (v23);
\draw (v20) -- (v24);
\draw (v17) -- (v22);
\draw (v18) -- (v21);
\draw (v19) -- (v24);
\draw (v20) -- (v23);
\draw (v18) -- (v23);

\draw[bend left=60] (v22) edge (v24);

\node at (11.5,-0.25)[label=south:$S_4$]{};


\coordinate (v1) at (5,2.5);
\coordinate (v2) at (6,2.5);
\coordinate (v3) at (5,3.5);
\coordinate (v4) at (6,3.5);


\coordinate (v6) at (7,2.5);
\coordinate (v7) at (8,2.5);
\coordinate (v8) at (7,3.5);
\coordinate (v9) at (8,3.5);


\node at (v1) [vertex]{};
\node at (v2) [vertex]{};
\node at (v3) [vertex]{};
\node at (v4) [vertex]{};
\node at (v5) [vertex]{};
\node at (v6) [vertex]{};
\node at (v7) [vertex]{};
\node at (v8) [vertex]{};
\node at (v9) [vertex]{};

\draw (v1) -- (v7);
\draw (v1) -- (v3);
\draw (v1) -- (v4);
\draw (v2) -- (v3);
\draw (v2) -- (v4);
\draw (v3) -- (v9);
\draw (v6) -- (v8);
\draw (v6) -- (v9);
\draw (v7) -- (v8);
\draw (v7) -- (v9);

\draw[bend right=20] (v1) edge (v7);

\draw[bend left=20] (v3) edge (v9);

\node at (6.5,2.25)[label=south:$S_1$]{};

\end{tikzpicture}
\end{center}
\caption{Base graphs on eight vertices.}
\label{fig:S234}
\end{figure}

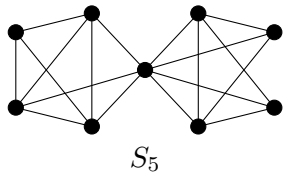
\begin{figure}
\begin{center}
\begin{tikzpicture}[scale=1, vertex/.style={circle,inner sep=2,fill=black,draw}, vertex2/.style={circle,inner sep=4,fill=black,draw}]

\coordinate (v1) at (0,1);
\coordinate (v2) at (0,0);
\coordinate (v3) at (1,-0.25);
\coordinate (v4) at (1,1.25);
\coordinate (v5) at (1.7,0.5);
\coordinate (v6) at (2.4,-0.25);
\coordinate (v7) at (2.4,1.25);
\coordinate (v8) at (3.4,1);
\coordinate (v9) at (3.4,0);

\node at (v1) [vertex]{};
\node at (v2) [vertex]{};
\node at (v3) [vertex]{};
\node at (v4) [vertex]{};
\node at (v5) [vertex]{};
\node at (v6) [vertex]{};
\node at (v7) [vertex]{};
\node at (v8) [vertex]{};
\node at (v9) [vertex]{};

\draw (v1) -- (v2);
\draw (v1) -- (v3);
\draw (v1) -- (v4);
\draw (v2) -- (v3);
\draw (v2) -- (v4);
\draw (v2) -- (v5);
\draw (v3) -- (v4);
\draw (v3) -- (v5);
\draw (v4) -- (v5);
\draw (v5) -- (v6);
\draw (v5) -- (v7);
\draw (v5) -- (v8);
\draw (v5) -- (v9);
\draw (v6) -- (v7);
\draw (v6) -- (v8);
\draw (v6) -- (v9);
\draw (v7) -- (v8);
\draw (v7) -- (v9);

\node at (1.7,-0.25)[label=south:$S_5$]{};

\end{tikzpicture}
\end{center}
\caption{Base graphs on 9 vertices.}
\label{fig:base9}
\end{figure}

Hence we define $\G$ to be the set $\{K_5,\overline{G57},\overline{G59},\overline{G293},\overline{G308},\overline{G312}, S_1, S_2,S_3,S_4,S_5\}.$

For a graph $G$ let $\delta(G)$ denote the minimum degree of a vertex in $G$. A $k$-edge-cutset is a set of $k$ edges whose removal disconnects a graph. If one of the components, of the resulting disconnected graph, contains only a single vertex then we say that the $k$-edge-cutset is \emph{trivial}. Otherwise it is \emph{non-trivial}. A graph is \emph{essentially $k$-edge-connected} if it has no non-trivial $(k-1)$-edge-cutsets. Since $(2,1)$-circuits have $\delta(G)\in \{3,4\}$ this will be a useful notion for us.

Let $G=(V,E)$ be a graph and let $G'$ be formed from $G$ by deleting an edge $xy$ from $E$ and adding a new vertex $v$ and edges $xv,yv,zv$ for $x,y,z\in V$. This operation is known as a 1-extension (elsewhere it is also referred to as the Henneberg 2 move \cite{NR,Tay,Whi3}). The inverse is known as a 1-reduction.
Also let $G''$ be formed from $G$ by deleting two non-adjacent edges $xy,zw$ from $E$ and adding a new vertex $v$ and edges $xv,yv,zv,wv$ for $x,y,z,w\in V$. This operation is known as $X$-replacement \cite{NR,Whi3}. 

As outlined above, when a circuit is sufficiently connected we will show that either a 1-reduction or an inverse $X$-replacement can be applied to form a new circuit. In the cases where a circuit is not sufficiently connected we will apply inverse `summation' moves to reduce to a smaller circuit. 

There will remain a handful of cases where the circuit, $G$ say, is invariant under these inverse `summation' moves. In order to deal with these cases we will construct a new circuit $G^*$ which is a, sufficiently connected, multigraph. We will then show that there exists a 1-reduction or inverse $X$-replacement that can be applied to $G^*$ to yield a new circuit. Finally we use this to show that an analogous 1-reduction or $X$-replacement could have been applied in $G$ to yield a new circuit.

Let $M^*(2,1)$ denote the set of all multigraphs that are $(2,1)$-circuits and let $M(2,1)$ denote the set of all simple graphs that are $(2,1)$-circuits.
The multigraph circuits that we construct in the manner referred to above are all elements of the subset $\mathcal{M}\subseteq M^*(2,1)$ defined as follows.
\begin{itemize}
\item[] Let $\mathcal{M}'$ be the subset of $M^*(2,1)$ where $G\in \mathcal{M}'$  if and only if 
\begin{itemize}
\item $G$ is either 3-connected or has fewer than four vertices;
\item the maximum edge multiplicity is three;
\item all the vertices in $G$ incident with a multiple edge have degree greater than three; 
\item if $G$ contains a loop, then the vertex incident with the loop has degree greater than three and it is not incident with any multiple edges; and
\item if $G=(V,E)$ contains a vertex, $v$ say, incident to two loops then $V=\{v\}$. 
\end{itemize}
\item[] Then $\mathcal{M}=M(2,1)\cup\mathcal{M}'$.
\end{itemize}

Note that $M(2,1)\subset\mathcal{M}\subset M^*(2,1)$. With this in mind, letting $G\in\mathcal{M}$, we call
\begin{itemize}
\item a vertex of degree three in $G$ a \emph{node};
\item a node $v$ of $G$ \emph{admissible} if there is a 1-reduction removing $v$ that results in a $(2,1)$-circuit in which the added edge is between previously non-adjacent vertices;
\item a degree four vertex $v$ in $G$ \emph{admissible} if there is an inverse $X$-replacement removing $v$ that results in a $(2,1)$-circuit in which the added edges are between pairs of previously non-adjacent vertices. 
\end{itemize}  
As $M(2,1)\subset\mathcal{M}$, in the case where $G\in M(2,1)$, the resulting circuit for an admissible 1-reduction should also be in $M(2,1)$. (The reader may find the motivation behind the results in Section \ref{sec:hen2} clearer if these results are thought of in the restricted case of $M(2,1)$; we will not need the general case until the end of the proof of Theorem \ref{thm:mainresult}.)
 We will use $\G^*$ to refer to the set of base graphs in $\mathcal{M}$. The elements of $\G^* \backslash M(2,1)$ will be derived in Section \ref{sec:hen2}.

\subsection{Results}

Our main result is as follows.

\begin{thm}\label{thm:mainresult}
A simple graph $G$ is a $(2,1)$-circuit if and only if it can be generated recursively from $G\in \G$ by applying 1-extensions and $X$-replacements sequentially within connected components and taking sums of connected components.
\end{thm}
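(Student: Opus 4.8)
The plan is to prove both directions of the equivalence, with the forward (``only if'') direction being by far the substantial one. For the easy direction, I would verify that each graph in $\G$ is indeed a simple $(2,1)$-circuit (a finite check on $K_5$, $\overline{G57}$, $\overline{G59}$, $\overline{G293}$, $\overline{G308}$, $\overline{G312}$ and $S_1,\dots,S_5$), and then show that 1-extension, $X$-replacement, and the various summation moves preserve the property of being a simple $(2,1)$-circuit when applied within a connected component (respectively of producing a graph all of whose connected components are $(2,1)$-circuits). The count condition $|E|=2|V|+1$ within a component and $i_G(X)\leq 2|V(X)|-1$ for proper subsets are local to check: a 1-extension adds one vertex and two edges net, an $X$-replacement adds one vertex and two edges net, and in each case one checks no proper subgraph violates the sparsity bound, while the summation moves are designed precisely to glue along a cutset so that both the global count and all the induced counts are controlled. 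This direction is essentially bookkeeping.

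For the ``only if'' direction I would argue by induction on $|V(G)|$. If $G$ is disconnected, then each connected component is itself a $(2,1)$-circuit (since a circuit in the matroid is connected at the matroid level but the graph may split, and the sparsity counts pass to components), so we may assume $G$ is connected. If $G\in\G$ we are done. Otherwise we must produce a reduction. The first case split is on the connectivity of $G$: I would prove that if $G$ is ``sufficiently connected'' --- 3-connected and essentially 4-edge-connected, in analogy with the $(2,2)$ case of \cite{Nix}, using $\delta(G)\in\{3,4\}$ --- then $G$ contains an admissible node (if $\delta(G)=3$) or an admissible degree four vertex (if $\delta(G)=4$, via Theorem \ref{thm:4reg} for the 4-regular case and the degree-four analysis otherwise). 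Applying the inverse operation yields a smaller simple $(2,1)$-circuit (within the same connected component), to which induction applies. When $G$ is not sufficiently connected, I would show that one of the six summation moves applies across the offending small cutset, splitting $G$ into strictly smaller $(2,1)$-circuits (adding appropriate edges or vertices to each side so that each piece satisfies the count), again closing the induction.

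The main obstacle --- and where the bulk of the work lies --- is establishing the existence of an admissible vertex in the sufficiently connected case while \emph{preserving simplicity}. The naive 1-reduction at a node $v$ with neighbours $\{x,y,z\}$ requires adding one of $xy$, $yz$, $xz$; this fails if all three are already present (i.e.\ $v$ lies in a $K_4$, which can occur since $K_4$-subgraphs are unavoidable), or if adding the edge creates a subgraph violating $i_G(X)\leq 2|V(X)|-1$, or --- the genuinely new difficulty here --- if adding the edge creates a $K_5$ subgraph, since $K_5$ is a $(2,1)$-circuit and hence cannot sit properly inside a larger one. This forces the two-case treatment announced in the introduction (nodes in a $K_4$ versus not), and for degree four vertices one must similarly control all the ways an inverse $X$-replacement can fail. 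I expect the argument to proceed by a careful local analysis around a minimum-degree vertex: count the ``blocked'' configurations, use the connectivity hypotheses to rule them out or to relocate to another low-degree vertex, and handle the finitely many small exceptional circuits that survive --- which is exactly how $S_1,\dots,S_5$ (and the small base graphs on six and seven vertices) enter $\G$. The final wrinkle, already flagged in the excerpt, is the handful of circuits invariant under all summation moves and lacking an admissible vertex in the simple sense: here one passes to an auxiliary multigraph $G^*\in\mathcal{M}$, finds an admissible reduction there using the more permissive multigraph framework, and transfers it back to $G$; making this transfer rigorous, and checking $G^*$ genuinely lands in the class $\mathcal{M}$ with its multiplicity and loop constraints, is the last substantial step.
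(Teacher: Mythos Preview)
Your high-level architecture matches the paper's: verify the base graphs and closure under the operations for the easy direction; for the hard direction, induct on $|V|$, split on connectivity, find an admissible vertex when $G$ is sufficiently connected, apply a sum move otherwise, and handle the residual cases via the auxiliary multigraph $G^*\in\mathcal{M}$. That is exactly the paper's scheme.

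There are, however, a few concrete slips and one real gap. First, a $(2,1)$-circuit has $|E|=2|V|$, not $2|V|+1$ (the formula is $k|V|-\ell+1$). Second, $(2,1)$-circuits are always connected and indeed $2$-edge-connected (Lemma~\ref{lem:connected}), so your ``disconnected'' case never arises; the phrase ``sums of connected components'' in the statement refers to gluing two previously constructed circuits via the sum moves, not to handling a disconnected $G$. Third, the $K_5$ obstruction you name is not a separate phenomenon: a proper $K_5$ subgraph cannot occur in a $(2,1)$-circuit at all, and the simplicity obstruction to $1$-reduction is entirely the $K_4$-containment issue together with the $v$-critical sets of Lemma~\ref{lem:admissibleedge}.

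The substantive gap is your connectivity threshold. You propose ``$3$-connected and essentially $4$-edge-connected'' by analogy with the $(2,2)$ case, but this is \emph{not} sufficient here. The paper's Theorem~\ref{thm:Hen2admissible} splits into two regimes: essentially $5$-edge-connected when there is no proper critical set, and essentially $4$-edge-connected when a proper critical set exists. The distinction is not cosmetic---without a proper critical set, a $4$-edge-cut can genuinely block all admissible reductions (this is precisely what forces the Case~(5) sum move of Lemma~\ref{lem:sum5a}), and the arguments of Section~\ref{sec:hen2} use the presence or absence of a proper critical set in essential ways (Lemmas~\ref{lem:properlem} versus~\ref{lem:noproper}). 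Your proposal would need to discover this split to go through. Relatedly, you should be aware that the sum moves do not always produce \emph{strictly smaller} circuits (see e.g.\ the $T_1$, $T_2$ configurations in the proof), which is why the $G^*$ machinery and the careful case analysis in Section~\ref{sec:construction} are needed rather than a straightforward size induction after one sum move.
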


In order to prove the theorem we first establish admissibility when the minimum degree is 4.

\begin{thm}\label{thm:4reg}
Every essentially 5-edge-connected $(2,1)$-circuit $G\in M(2,1)$ not equal to $K_5$ with $\delta(G)=4$ contains an admissible vertex.
\end{thm}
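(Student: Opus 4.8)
The plan is to take a degree-4 vertex $v$ in $G$ with neighbours $N(v)=\{a,b,c,d\}$ and show that some pair among the $\binom{4}{2}=6$ (in fact, up to $3$ disjoint) pairings of $N(v)$ yields an admissible inverse $X$-replacement. Recall that the inverse $X$-replacement removing $v$ via the pairing $\{ab,cd\}$ produces the graph $G_{ab,cd}$ obtained by deleting $v$ and adding the two edges $ab$ and $cd$; this graph has $|E|=2|V|$ and the correct edge count to be a $(2,1)$-circuit, so the only ways it can fail to be a $(2,1)$-circuit are: (i) one of $ab$, $cd$ is already an edge (violating simplicity), or (ii) there is a proper subset $X\subsetneq V(G_{ab,cd})$ with $i_{G_{ab,cd}}(X)\geq 2|X|$, i.e. a \emph{violated set}. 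As is standard in these arguments, a violated set after the move must contain at least one of the two added edges, and one shows that a minimal violated set $X$ is actually \emph{tight} in $G_{ab,cd}$, that is $i(X)=2|X|-1$ before we count the added edge(s); this reduces everything to analysing tight sets of $G$ (sets with $i_G(X)=2|X|-1$) together with the ``$K_5$-type'' obstruction coming from (i).

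The key steps, in order, are as follows. First I would set up the submodularity of the function $X\mapsto i_G(X)$ (equivalently of $X\mapsto 2|X|-1-i_G(X)$ on nonempty sets), and record the elementary fact that if $X,Y$ are tight with $X\cap Y\neq\emptyset$ then $X\cup Y$ and $X\cap Y$ are tight and there are no edges between $X\setminus Y$ and $Y\setminus X$; and that if $X\cap Y=\emptyset$ then there is at most one edge between them (since $G$ is a $(2,1)$-circuit, $i_G(X)\leq 2|X|-1$ for proper subsets, with equality exactly on tight sets). Second, I would classify the local obstructions at $v$: the pairing $\{ab,cd\}$ fails iff $ab\in E$, or $cd\in E$, or $N(v)\cup\{v\}$ minus $v$... more precisely iff there is a tight set $X$ with $|X\cap N(v)|\geq 2$ containing the relevant pair, or $a,\dots,d$ create a $K_5$. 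The crucial combinatorial claim is then: if \emph{every} pairing of $N(v)$ is blocked, the blocking tight sets (one or two per pairing) can be intersected/unioned using Step 1 to produce either a small non-trivial edge-cutset of size $\leq 4$ around $v$ — contradicting essential $5$-edge-connectivity — or a global structure forcing $G=K_5$. Third, I would run the case analysis on how many of the $3$ perfect pairings $\{ab,cd\},\{ac,bd\},\{ad,bc\}$ are blocked and by which type: handling the simplicity obstruction (edges already present among $N(v)$) by noting that if, say, all of $ab,ac,ad,bc,bd,cd$ were present then $\{a,b,c,d,v\}$ induces $K_5$, which by essential $5$-edge-connectivity forces $G=K_5$ (excluded); and handling the tight-set obstructions by the cutset-extraction argument.

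The main obstacle I expect is Step 3's bookkeeping: a single tight set can simultaneously block several pairings, and one must carefully combine the (possibly up to six) tight sets — some intersecting, some disjoint — while tracking the four edges incident to $v$ and which endpoints lie in which set, to force a contradiction with essential $5$-edge-connectivity. The cleanest route is probably to take a \emph{maximal} tight set $T$ with $v\notin T$ and $|T\cap N(v)|$ as large as possible: if $|T\cap N(v)|\in\{2,3\}$ one counts the edges leaving $T\cup\{v\}$ to get a cutset of size $\leq 4$; if $|T\cap N(v)|=4$ then $N(v)\subseteq T$ and $T\cup\{v\}$ is tight-plus-one, i.e.\ $V(G)=T\cup\{v\}$, and then direct inspection (using that the only ``bad'' small circuit is $K_5$) finishes it. I would also need to treat separately the degenerate possibility that two of $a,b,c,d$ coincide as seen from $v$ — impossible here since $G\in M(2,1)$ is simple — and the possibility that $G-v$ is disconnected, which again yields a small cutset at $v$.
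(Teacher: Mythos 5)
There is a genuine gap, and it sits exactly where the real difficulty of this theorem lies. Your framework fixes a single degree-4 vertex $v$ and aims to show that \emph{some} pairing of $N(v)$ gives an admissible inverse $X$-replacement, deriving a contradiction if all three pairings are blocked. But the theorem does not assert that every vertex is admissible, and indeed it is false that every vertex is: a pairing $\{ab,cd\}$ can be blocked purely by simplicity, and to block all three pairings this way you do not need all six edges among $N(v)$ (the only sub-case you address). Three edges suffice, in exactly two patterns: either three of the neighbours of $v$ form a triangle (so $v$ lies in a $K_4$), or one neighbour $x$ is adjacent to the other three. The first pattern is ruled out globally by an edge-counting argument (a $K_4$ in a 4-regular graph has exactly four edges leaving it, a non-trivial 4-edge-cutset unless $G=K_5$), but the second pattern is perfectly compatible with essential 5-edge-connectivity and yields no contradiction at $v$. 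At that point you must abandon $v$ and prove that some \emph{other} vertex is admissible -- the paper shows the neighbour $y$ of $v$ works, because $y$'s two neighbours $r,s$ outside $\{v,x,y,z,w\}$ satisfy $vr,xs\notin E$ by 4-regularity. Your proposal has no mechanism for switching vertices, so the plan as stated cannot be completed.

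A secondary remark: the tight-set machinery you set up in Steps 1--2 is vacuous here. Since $\delta(G)=4$ forces $G$ to be 4-regular (Lemma \ref{lem:4reg}), a proper subset $X$ with $i(X)\geq 2|X|$ would induce a 4-regular subgraph and hence a union of components, i.e.\ a disconnection, which essential 5-edge-connectivity forbids; likewise any proper critical set would have only two edges leaving it. So the only obstructions to admissibility are simplicity and connectivity of $G-v$ plus the added edges, and the entire proof reduces to the simplicity analysis above -- precisely the part your proposal under-specifies.
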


From there, the key technical step to proving Theorem \ref{thm:mainresult} is to establish admissibility, in the $\delta(G)=3$ case, when $G$ is sufficiently connected.

\begin{thm}\label{thm:Hen2admissible}
Let $G=(V,E)$ be a $3$-connected $(2,1)$-circuit with $\delta(G)=3$ and $G\not\in \G$. Suppose either
\begin{enumerate}[(i)]
\item $G$ is essentially 5-edge-connected and there is no proper critical set or
\item $G$ is essentially 4-edge-connected and there is a proper critical set.
\end{enumerate}
Then $G$ contains an admissible node.
\end{thm}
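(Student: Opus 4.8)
The plan is to find an admissible node by a standard "count-and-contract" argument, ruling out obstructions using the connectivity hypotheses. Let $v$ be a node of $G$ with neighbours $N(v)=\{x,y,z\}$. Since $G$ is a $(2,1)$-circuit, $G-v+e$ (for $e$ one of the three possible edges $xy,yz,xz$) has the right edge count $2|V|-2+1 = 2(|V|-1)-1$, so it fails to be a $(2,1)$-circuit precisely when either (a) $e$ is already present (simplicity obstruction), or (b) there is a set $X$ with $\{$two of $x,y,z\} \subseteq X$, $v\notin X$, and $i_G(X)=2|X|-1$ — a \emph{critical set} blocking the edge across it; or (c) deleting $v$ and adding $e$ fails the lower count, i.e. creates a subgraph violating $i\le 2|\cdot|-1$ from below, which is the same condition (b) phrased via the complementary set. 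So the first step is to write down this characterisation cleanly: $v$ is non-admissible iff for \emph{every} pair from $N(v)$ the added edge is either present or lies inside a critical set avoiding $v$.

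Next I would split along the two cases of the hypothesis. In case (i), $G$ is essentially $5$-edge-connected with no proper critical set (the only critical sets are trivial, i.e. singletons or all of $V$ minus a point, or $V$ itself). Then obstruction (b) can only come from $V\setminus\{v\}$ or from small sets, and essential $5$-edge-connectivity forces every node's neighbourhood to be "spread out": a node $v$ with $i_G(N[v])$ large would create a small edge cut. I would show that the absence of proper critical sets means the only way all three candidate edges fail is that $x,y,z$ are pairwise adjacent, i.e. $N[v]=K_4$; but then (since $K_4$ is not a base here and $G\ne K_5$) one examines the edge cut around $N[v]$ — it has size $2\cdot 4 - 2\cdot 3 = \dots$ wait, $i_G(N[v]) = 3 + 3 = 6$ so the cut out of $N[v]$ has $2\cdot4 - 6 \cdot$... precisely $d(N[v]) = \sum_{u\in N[v]} d(u) - 2 i_G(N[v])$; with the three neighbours of degree $\ge 3$ and $v$ of degree $3$ this cut is small enough to contradict essential $5$-edge-connectivity unless $G$ is tiny, landing us in $\G$. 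So in case (i) every node is either admissible or forces $G\in\G$ or $G=K_5$, contradiction.

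In case (ii), $G$ is essentially $4$-edge-connected and a proper critical set $C$ exists. Here I would take $C$ to be a \emph{minimal} proper critical set (or a minimal one containing a fixed edge); a submodularity argument on the function $i_G$ shows that the intersection/union of two critical sets is critical when they cross nontrivially, so minimal ones behave well and, in particular, the minimal proper critical set induces a $(2,1)$-circuit-like block with few edges leaving it. Essential $4$-edge-connectivity says the cut $d(C)$ is at least $4$ for non-trivial $C$; combining $i_G(C)=2|C|-1$ with the degree sum gives $d(C) = \sum_{u\in C} d(u) - 2(2|C|-1)$. If $C$ contains a node, the boundary is forced to be exactly $4$ and $C$ is essentially a smaller $(2,1)$-circuit glued on a $2$-edge or $2$-vertex cut, and I would argue a node \emph{strictly inside} $C$ (one whose closed neighbourhood stays in $C$) must exist and be admissible, because the only obstructing critical sets now sit inside the smaller block, to which we can apply the minimality of $C$. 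If no node lies strictly inside $C$, then all nodes are "on the boundary" and I'd analyse the bounded number of configurations directly, again either finding an admissible node or hitting one of the finitely many base graphs $S_1,\dots,S_5$ — this is exactly why those exceptional graphs on $8$ and $9$ vertices must be added to $\G$.

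The main obstacle I expect is the bookkeeping in case (ii): controlling how critical sets can overlap each other and overlap the neighbourhoods of the various nodes, and showing that the finitely many irreducible configurations are precisely $\G$ and nothing more. The submodularity (laminar-family) structure of critical sets is the right tool, but extracting a usable "innermost block" and verifying that some node there survives all obstructions — while simultaneously keeping the added edge non-adjacent for simplicity, since $K_4$'s abound — is the delicate part, and is presumably where the case analysis and the appearance of $S_1,\dots,S_5$ come from.
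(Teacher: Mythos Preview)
Your opening characterisation of non-admissibility is fine, and in case (i) your first deduction is correct: with no proper critical set, the only critical sets are of the form $V\setminus\{w\}$ for a node $w$, and these cannot block any edge at $v$ (they contain all three neighbours), so a non-admissible node $v$ must indeed sit in a copy of $K_4$.

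The gap is what you do next. Looking at the edge cut around a single $K_4$ does not give a contradiction: if the three neighbours of $v$ have high degree, the cut $d(N[v])$ can be arbitrarily large, so essential $5$-edge-connectivity is not violated locally. The paper's argument is global, not local. It takes \emph{all} copies of $K_4$ simultaneously, with vertex sets $W_1,\dots,W_k$, and shows (using a degree-counting lemma) that either some node lies outside $\bigcup W_i$ --- and such a node is automatically admissible by your own observation --- or the $W_i$ cover all but at most one vertex and their union is connected. In that residual case one builds an auxiliary graph on $\{W_1,\dots,W_k\}$ recording which $K_4$'s intersect; absence of proper critical sets forces any two to meet in at most one vertex with no cross-edges, so the auxiliary graph is a tree or a single cycle, and then $3$-connectivity of $G$ is violated unless $|V|$ is small, which is handled by direct inspection. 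Your single-$K_4$ cut argument would need to be replaced by this structural analysis.

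In case (ii) your strategy is essentially inverted relative to the paper and, as written, does not close. You take a \emph{minimal} proper critical set $C$ and look for an admissible node inside; but obstructing critical sets for a node $u\in C$ are not confined to $C$, so minimality of $C$ gives you no leverage, and $G[C]$ is not itself a $(2,1)$-circuit (it has one edge too few). The paper instead takes a \emph{maximal} node-critical set $X$ (node-critical: critical, containing two neighbours of some node $v\in V_3^*$ whose third neighbour has degree $\geq 4$) and looks for an admissible node \emph{outside} $X$. A separate counting lemma guarantees $|V_3^*\setminus X|\geq 2$; one picks such a node $u$ that is a leaf of $G[V_3^*\setminus(X\cup\{v\})]$, and the key step shows that if $u$ were non-admissible then one could enlarge $X$ to a strictly bigger node-critical set $X'\supsetneq X$, contradicting maximality. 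That enlargement lemma is the engine of case (ii) and has no analogue in your sketch.

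Finally, the graphs $S_1,\dots,S_5$ are already in $\G$ and are excluded by hypothesis; they do not emerge from the case-(ii) analysis as you suggest. The small-graph enumeration that does occur lives entirely in case (i), for $|V|\leq 8$.
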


Proper critical sets are defined in Section \ref{sec:hen2} and the reason for splitting the above theorem into these two cases is purely technical.

\subsection{Outline of paper}

In Section \ref{sec:4reg} we prove Theorem \ref{thm:4reg} establishing that when $\delta(G)=3$ then either a $(2,1)$-circuit has an admissible vertex or it has a small separating set of edges.
In Section \ref{sec:hen2} we prove Theorem \ref{thm:Hen2admissible} establishing that whenever a $(2,1)$-circuit has $\delta(G)=4$ then either it has an admissible vertex or it has a small separating set of edges. In Section \ref{sec:sumsec} we deal with graphs that are $(2,1)$-circuits but fail to be sufficiently connected. In Section \ref{sec:construction} we combine our results to prove Theorem \ref{thm:mainresult}.


\section{$X$-replacement}
\label{sec:4reg}

An \emph{X-replacement} is the deletion of two non-adjacent edges $ab,cd$ and the addition of a vertex $v$ adjacent to $a,b,c,d$. The inverse operation is the deletion of a degree four vertex and the addition of two non-adjacent edges between the neighbours. 
For $G=(V,E)$, define $f(G)=2|V|-|E|$, so that a graph $G$ is a $(2,1)$-circuit if and only if $f(G)=0$ and every proper subgraph $H$ satisfies $f(H)\geq 1$.
If $G$ is a $(2,1)$-circuit and $v \in V$, then we say that $v$ is \emph{admissible} if there is an inverse X-replacement on $v$ resulting in a $(2,1)$-circuit.

\begin{lem}\label{lem:4reg}
A graph $G$ is a $(2,1)$-circuit with $\delta(G)=4$ if and only if $G$ is connected and 4-regular.
\end{lem}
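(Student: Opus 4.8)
I want to prove Lemma~\ref{lem:4reg}: a graph $G$ is a $(2,1)$-circuit with $\delta(G)=4$ if and only if $G$ is connected and 4-regular. The "if" direction is a short count; the "only if" direction is where the work is, and the main point is to upgrade the hypothesis $\delta(G)\ge 4$ to 4-regularity using the circuit condition $f(G)=0$, $f(H)\ge 1$ for all proper $H$.

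Let me write the plan for "only if". Suppose $G=(V,E)$ is a $(2,1)$-circuit with $\delta(G)=4$. First, connectivity: if $G$ were disconnected with components $G_1,\dots,G_k$ ($k\ge 2$), then each $G_i$ is a proper subgraph, so $f(G_i)\ge 1$, hence $f(G)=\sum_i f(G_i)\ge k\ge 2>0$, contradicting $f(G)=0$. So $G$ is connected. Next, 4-regularity: by the handshake identity, $\sum_{v\in V}\deg(v)=2|E|$, and since $f(G)=2|V|-|E|=0$ we have $|E|=2|V|$, so $\sum_{v\in V}\deg(v)=4|V|$. Because $\delta(G)=4$, every term $\deg(v)\ge 4$; a sum of $|V|$ terms each at least $4$ equalling $4|V|$ forces every term to equal exactly $4$. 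Hence $G$ is 4-regular. That finishes "only if" — this direction is in fact entirely a degree count and uses only $f(G)=0$, not the full circuit condition.

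For the "if" direction: suppose $G$ is connected and 4-regular on $n=|V|$ vertices. Then $|E|=2n$, so $f(G)=0$. It remains to check that every proper subgraph $H=(V',E')\subsetneq G$ satisfies $f(H)\ge 1$, i.e. $|E'|\le 2|V'|-1$. It suffices to verify this for induced subgraphs on vertex sets $X\subsetneq V$ (removing edges only helps), and we may take $X$ nonempty. If $X=V$ but $E'\subsetneq E$, then $|E'|\le 2n-1$ is immediate. If $X\subsetneq V$ with $X\neq\emptyset$, then since $G$ is connected there is at least one edge leaving $X$; in fact, counting degrees within $X$: $2\,i_G(X)=\sum_{v\in X}\deg_G(v)-d(X,V\setminus X)=4|X|-d(X,V\setminus X)$ where $d(X,V\setminus X)\ge 1$ is the number of edges from $X$ to its complement. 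Hmm, this only gives $i_G(X)\le 2|X|-\tfrac12$, i.e. $i_G(X)\le 2|X|-1$ once we note $d(X,V\setminus X)$ is... not necessarily even. Actually $d(X,V\setminus X)$ can be odd, so $4|X|-d(X,V\setminus X)$ being even forces $d(X,V\setminus X)$ even, hence $d(X,V\setminus X)\ge 2$, giving $i_G(X)\le 2|X|-1$ as required. So $\delta(G)=4$ as well, since every vertex has degree $4$. That completes the plan.

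**Main obstacle.** The only mildly delicate point is the last parity argument in the "if" direction: one must observe that $4|X|-d(X,V\setminus X)=2\,i_G(X)$ is even, so $d(X,V\setminus X)$ is even and therefore (being positive by connectedness) at least $2$, which is exactly what is needed to get the strict inequality $i_G(X)\le 2|X|-1$. Everything else is routine handshake-lemma bookkeeping, so I expect the proof to be quite short.
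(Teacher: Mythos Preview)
Your proof is correct and follows essentially the same approach as the paper: both directions are short degree-counting arguments. The only cosmetic difference is in the ``if'' direction: you use the parity observation that $d(X,V\setminus X)=4|X|-2\,i_G(X)$ is even (hence $\ge 2$), whereas the paper argues by average degree---if a proper subgraph $H$ had $|E'|\ge 2|V'|$ then $H$ would have average degree at least four, yet connectedness and $4$-regularity of $G$ force some vertex of $H$ to have degree strictly less than four, a contradiction. Both arguments are equally short and amount to the same handshake count.
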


\begin{proof}
Let $G=(V,E)$.
If $G$ is a $(2,1)$-circuit with $\delta(G)=4$, then $G$ is 4-regular and a simple counting argument
implies that $G$ is connected.
Conversely, if $G$ is connected and 4-regular then $|E|=2|V|$. Suppose $H=(V',E')$ is a proper subgraph of $G$ with $|E'| \geq 2|V'|$ then $H$ has average degree at least four. Since $H$ is proper and $G$ is connected we contradict the 4-regularity of $G$.
\end{proof}

With this lemma, Theorem \ref{thm:4reg} can be extracted from \cite{BJF}. For completeness we provide an alternate proof.


\begin{proof}[Proof of Theorem \ref{thm:4reg}]

Let $G=(V,E)$.

\begin{claim}\label{claim:simple1}
Let $v \in V$. Then $v$ is non-admissible if and only if every possible inverse X-replacement on $v$ results in a non-simple graph.
\end{claim}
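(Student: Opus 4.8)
## Proof proposal for Claim \ref{claim:simple1}

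The plan is to prove the two directions separately, with the non-trivial content residing in showing that if every inverse $X$-replacement on $v$ yields a non-simple graph, then $v$ is non-admissible. Throughout, recall that by Lemma \ref{lem:4reg} the graph $G$ is $4$-regular, so $v$ has exactly four neighbours $a,b,c,d$ (all distinct, since $G$ is simple), and an inverse $X$-replacement on $v$ deletes $v$ together with its four incident edges and adds two edges forming a perfect matching on $\{a,b,c,d\}$. There are exactly three such matchings: $\{ab,cd\}$, $\{ac,bd\}$, $\{ad,bc\}$.

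The easy direction is immediate from the definition: if $v$ is admissible, then by definition there is an inverse $X$-replacement on $v$ producing a $(2,1)$-circuit, and since we are working in $M(2,1)$ the resulting graph is simple; hence not every inverse $X$-replacement on $v$ is non-simple. For the converse, suppose that some inverse $X$-replacement on $v$, say the one adding edges $e_1, e_2$, produces a simple graph $G'$; I must show $G'$ is a $(2,1)$-circuit, i.e. that $v$ is admissible. First note the count is automatic: $|V(G')| = |V|-1$ and $|E(G')| = |E|-4+2 = |E|-2 = 2|V|-2 = 2|V(G')|$, so $f(G') = 0$. It remains to check that every proper subgraph $H$ of $G'$ has $f(H) \geq 1$. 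The key step is the standard argument: if some proper $H' = (V',E')$ of $G'$ violated this, i.e. $|E'| \geq 2|V'|$, consider the corresponding subgraph of $G$. If $H'$ uses neither of the new edges $e_1,e_2$, then $H'$ is already a proper subgraph of $G$, contradicting that $G$ is a $(2,1)$-circuit. If $H'$ uses one or both new edges, pull $v$ back in: form $H$ in $G$ on vertex set $V' \cup \{v\}$ by replacing each new edge $e_i$ among the $e_j \in E'$ by the two edges of $G$ through $v$ that created it, keeping all other edges of $E'$. Then $|V(H)| = |V'|+1$ and $|E(H)| \geq |E'| + (\text{number of new edges in } H') \geq |E'| + 1 \geq 2|V'| + 1 = 2|V(H)| - 1$; one checks that if both new edges appear then $|E(H)| \geq |E'| + 2 = 2|V(H)|$, and if $H \ne G$ this contradicts $G$ being a $(2,1)$-circuit. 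The only remaining possibility is $H = G$, which forces $V' = V \setminus \{v\}$ and $E'$ to contain both $e_1, e_2$ and all of $E \setminus \{va,vb,vc,vd\}$; but then $|E'| = (2|V|-4) + 2 = 2|V|-2 < 2|V'| = 2(|V|-1)$, contradicting $|E'| \geq 2|V'|$. Hence no such $H'$ exists and $G'$ is a $(2,1)$-circuit, so $v$ is admissible.

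The main obstacle is the bookkeeping in the subgraph pull-back: one must be careful that replacing a new edge $e_i = xy$ by the path $x$--$v$--$y$ genuinely adds the vertex $v$ and two edges (not one), which is why the non-adjacency condition built into $X$-replacement matters — $e_1$ and $e_2$ are not parallel, so the pulled-back edges through $v$ are four distinct edges when both new edges are used, and the vertex $v$ is added exactly once. A secondary subtlety is confirming that the $H=G$ boundary case cannot produce a violation; the count above handles it cleanly since reconstructing all of $G$ from $G'$ necessarily undershoots the $2|V'|$ threshold. I would also remark that this claim reduces checking admissibility of a degree-four vertex to a purely local, finite check (inspecting which of the three matchings on $\{a,b,c,d\}$ avoids creating a multi-edge), which is exactly how it will be used in the remainder of the proof of Theorem \ref{thm:4reg}.
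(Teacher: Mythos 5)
Your approach is genuinely different from the paper's: the paper does not verify the sparsity counts directly, but instead invokes Lemma \ref{lem:4reg} (a graph with $\delta=4$ is a $(2,1)$-circuit iff it is connected and $4$-regular), notes that $G'$ is clearly $4$-regular, and uses essential $5$-edge-connectedness to see that $v$ is not a cut-vertex, so $G'$ stays connected. Unfortunately your direct verification has a real gap in the case where the violating subgraph $H'$ of $G'$ contains \emph{exactly one} of the new edges, say $e_1=ab$. There your pull-back only gives $|E(H)|\geq 2|V(H)|-1$, and this is \emph{not} a contradiction with $G$ being a $(2,1)$-circuit: proper subgraphs with $f(H)=1$ (critical sets) are perfectly allowed. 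Concretely, if $G$ had a proper critical set $X$ with $a,b\in X$ and $v\notin X$, $\{c,d\}\not\subseteq X$, then $i_{G'}(X)=i_G(X)+1=2|X|$ and $G'$ would fail to be a circuit even though it is simple --- this is exactly the obstruction recorded in Lemma \ref{lem:admissibleedge} for the degree-three analogue, and it is the reason Section \ref{sec:hen2} has to work so hard around critical sets. The case can be excluded here, but only by actually using the hypotheses of Theorem \ref{thm:4reg}: in a $4$-regular graph a proper critical set $X$ has exactly $4|X|-2(2|X|-1)=2$ edges to its complement, giving a non-trivial $2$-edge-cutset (both sides have at least two vertices), contradicting essential $5$-edge-connectedness. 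Your proof never invokes essential $5$-edge-connectedness, and the claim is false without it, so this step cannot be repaired by bookkeeping alone.

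Two smaller points. First, in your $H=G$ boundary case the displayed inequality $2|V|-2<2(|V|-1)$ is false (the two sides are equal); the correct observation is that $H=G$ forces $H'=G'$, contradicting the assumption that $H'$ is a proper subgraph. Second, once you are willing to use $4$-regularity and connectivity anyway, the paper's route through Lemma \ref{lem:4reg} is shorter: it replaces all of the subgraph counting by the single observation that a connected $4$-regular simple graph is automatically a $(2,1)$-circuit.
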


\begin{proof}[Proof of Claim]
If every inverse X-replacement on $v$ creates a multigraph, then clearly $v$ is non-admissible.
For the converse let $G'$ be the result of the inverse X-replacement. By Lemma \ref{lem:4reg} we need to check that 4-regularity and connectedness are preserved. The first of these is clear and if $G'$ is not connected then $v$ is a cut-vertex, but since $v$ has degree four this contradicts the assumption that G is essentially 5-edge-connected.
\end{proof}

It follows that 
$G'$ is not simple if and only if in $G$, $v$ is (a) contained in a copy of $K_4$ or (b) there is $x \in N(v)$ (the neighbour set of $v$) adjacent to every other vertex in $N(v)$.

We next show that no subgraph of $G$ is isomorphic to $K_4$.
Suppose otherwise, i.e., that $H$ is a subgraph of $G$ isomorphic to $K_4$. As $G$ is 4-regular there is a 4-edge-cutset between $H$ and $G-H$, since $G\neq K_5$ this cutset is non-trivial.

We complete the proof by showing that either $v$ or one of its neighbours are admissible.
(In fact, either $v$ or three of its neighbours are admissible.)
Let $N(v)={w,x,y,z}$ and note that $v$ is not in a copy of $K_4$. Hence if $v$ is not admissible then, say, $xy, xz, xw \in E$. Let $N(y)=\{v,x,r,s\}$ and note that, since $v$ is not in a subgraph isomorphic to $K_4$, $r,s \in V-\{v,x,y,z,w\}$. By 4-regularity $vr, xs$ are not in $E$. Thus Claim \ref{claim:simple1} implies $y$ is admissible.
\end{proof}

\section{The 1-extension Operation}
\label{sec:hen2}

In this section we prove Theorem \ref{thm:Hen2admissible}. We start with some facts (Lemmas \ref{lem:connected} to \ref{lem:V3}) about $(2,1)$-circuits that can be proved by simple counting arguments. See \cite{B&J} or \cite{Nix} for similar results. Crucial to the problem is that we must retain simplicity throughout the recursive construction. We achieve this, in Subsection \ref{subsec:simple}, by establishing conditions for a $(2,1)$-circuit with $\delta(G)=3$ to have nodes not contained in subgraphs isomorphic to $K_4$. Finally in Subsection \ref{subsec:adm} we loosely follow the method established in \cite{B&J} to deduce admissibility for some node.

\subsection{Preliminaries}

Let $G=(V,E)$ be a $(2,1)$-circuit. A subset $X\subsetneq V$ is \emph{critical} if $i(X)=2|X|-1$ and is \emph{semi-critical} if $i(X)=2|X|-2$ and for all $X'\subseteq X$ we have $i(X')\leq 2|X'|-2$. We are particularly interested in special kinds of critical sets. We define a critical set $X\subsetneq V$ to be: \emph{$v$-critical} for a node $v\in V$ if $X$ contains exactly two neighbours of $v$ but not $v$ itself; \emph{node-critical} if $X$ is $v$-critical for some node $v$ (where $N(v)=\{x,y,z\}$) in $V$ such that $x,y\in X$ and $d(z)\geq 4$; and \emph{proper} if $|X|<|V|-1$. Note that node-critical sets are proper, but proper critical sets need not be node-critical.
Suppose that $A,B\subseteq V$; then we denote the number of edges $uv\in E$ such that $u\in A-B$ and $v\in B-A$ by $d(A,B)$.

\begin{lem}\label{lem:connected}
Let $G=(V,E)$ be a $(2,1)$-circuit. Then $G$ is connected and 2-edge-connected.
\end{lem}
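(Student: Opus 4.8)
The plan is to use the defining sparsity inequality together with the edge count to rule out disconnections. First, suppose for contradiction that $G$ is disconnected, with vertex set partitioned as $V = V_1 \sqcup V_2$ where both parts are nonempty and there are no edges between them. Then $E = E_1 \sqcup E_2$ where $E_i$ is the edge set induced on $V_i$. Since $G$ is a $(2,1)$-circuit we have $|E| = 2|V| - 1$. On the other hand $V_1$ and $V_2$ are each proper subsets of $V$ (each is nonempty and their union is all of $V$, so neither can be everything), so the sparsity condition $i(X) \leq 2|X| - 1$ applies to both: $|E_1| = i(V_1) \leq 2|V_1| - 1$ and $|E_2| = i(V_2) \leq 2|V_2| - 1$. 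Adding these gives $|E| = |E_1| + |E_2| \leq 2|V| - 2 < 2|V| - 1$, a contradiction. Hence $G$ is connected.

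For 2-edge-connectedness, suppose $G$ has a cut edge $e = uv$, so that $G - e$ splits into components with vertex sets $V_1 \ni u$ and $V_2 \ni v$, again both proper subsets of $V$. Now $i(V_1) + i(V_2) = |E| - 1 = 2|V| - 2$, while the sparsity bounds give $i(V_1) + i(V_2) \leq (2|V_1| - 1) + (2|V_2| - 1) = 2|V| - 2$. So equality must hold throughout, meaning $V_1$ and $V_2$ are both critical sets; in particular neither is a single vertex (a single vertex $w$ would have $i(\{w\}) = 0 \neq 2\cdot 1 - 1 = 1$, as $G$ is simple and loopless). This does not yet give a contradiction directly, so instead I would argue via the circuit property more carefully: consider the subgraph $H$ induced on $V_1$. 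It satisfies $i(V_1) = 2|V_1| - 1$, which is exactly the edge count of a $(2,1)$-circuit on $|V_1|$ vertices plus being one short — wait, $2|V_1| - 1$ is the full circuit count only if we compare with $k|V| - \ell + 1 = 2|V_1| - 1 + 1$. Let me instead use: $H_1 := G[V_1]$ has $|E(H_1)| = 2|V_1| - 1$ and, being a subgraph of the circuit $G$, every proper subgraph of $H_1$ is independent, so $H_1$ itself is independent (its edge count equals $2|V_1| - 1 < 2|V_1|$... no). The cleanest route: since $|E(H_1)| = 2|V_1| - 1$ and $G$ is a circuit, $E(H_1)$ is an independent set in the matroid iff $|E(H_1)| \le 2|V_1|-1$ with all subsets tight-respecting — actually $E(H_1)$ being a proper subset of the circuit $E$ is automatically independent. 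Then adding the edge $e$ to $E(H_1) \cup E(H_2)$: $E(H_1) \cup E(H_2) \cup \{e\} = E$ is dependent (it is the circuit), but $E(H_1) \cup \{e\}$ lives on $V_1$ with $2|V_1|$ edges on $|V_1|$ vertices... but $e$ is not induced by $V_1$.

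The honest approach that avoids this tangle: after establishing $i(V_1) = 2|V_1|-1$ and $i(V_2) = 2|V_2|-1$ by the equality argument above, note that $E(G[V_1])$ is a proper subset of the circuit $E$ (it misses at least $e$), hence independent in the simple $(2,1)$-sparse matroid. But an independent set on $V_1$ has at most $2|V_1| - 1$ edges with the property that it contains no circuit; having exactly $2|V_1|-1$ edges it would be a basis of the matroid restricted to $V_1$. This is not contradictory. So 2-edge-connectedness must instead follow by a slightly different counting: if $e=uv$ is a bridge, set $V_1' = V_1$ and observe $V_1 \cup \{v\}$ is also a proper subset (assuming $|V_2| \geq 2$) with $i(V_1 \cup \{v\}) \geq i(V_1) + 1 = 2|V_1|$ (the $+1$ from edge $e$), contradicting $i(V_1 \cup \{v\}) \leq 2|V_1 \cup \{v\}| - 1 = 2|V_1| + 1$ — still not a contradiction.

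The step I expect to be the real obstacle is exactly this: the naive sum-of-bounds argument gives equality rather than contradiction for a single bridge, so 2-edge-connectedness genuinely needs the \emph{circuit} structure, not just sparsity. The fix: with $e = uv$ a bridge and (WLOG) $|V_2| \ge 2$, consider $X = V \setminus \{u\}$ if $u$ has degree... better, use that $G$ being a circuit means $G - e'$ is independent for \emph{every} $e' \in E$, equivalently $f(G) = 0$ and $f(H) \ge 1$ for all proper $H$; now take $H = G - e$, which is proper, so $f(G-e) \ge 1$, i.e. $2|V| - (|E|-1) \ge 1$, i.e. $2|V| - |E| \ge 0$, consistent. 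Then within $G - e$ the two components $G[V_1], G[V_2]$ each satisfy $f \ge 1$ (proper subgraphs of $G$), so $f(G-e) = f(G[V_1]) + f(G[V_2]) \ge 2$, giving $2|V| - |E| + 1 \ge 2$, i.e. $|E| \le 2|V| - 1$, with equality forced; equality means $f(G[V_1]) = f(G[V_2]) = 1$, so both are themselves $(2,1)$-circuits, hence each has $\geq 5$ vertices and in particular neither is trivial — and now deleting \emph{any} edge $e_1 \in E(G[V_1])$ from $G$ leaves $f(G - e_1) = 1 > 0$ forcing $G - e_1$ to still be a $(2,1)$-circuit's underlying... no, $f(G-e_1)=1$ just says it is independent-ish. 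The contradiction I would finally drive home: $G[V_1]$ is a $(2,1)$-circuit, so $E(G[V_1])$ is dependent in the matroid, but $E(G[V_1]) \subsetneq E(G)$ and $E(G)$ is a circuit, so every proper subset of $E(G)$ is independent — contradiction. That is the clean finish. So the lemma follows from: (a) the sum-of-sparsity-bounds argument for connectedness, and (b) for 2-edge-connectedness, the observation that a bridge would split $G$ into two induced subgraphs whose edge sets are proper subsets of the circuit $E$ yet (by the forced equality) must each be circuits themselves, contradicting that proper subsets of a circuit are independent.
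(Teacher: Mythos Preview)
Your proof has a basic arithmetic slip that derails the entire second half. A $(2,1)$-circuit has $|E|=k|V|-\ell+1=2|V|-1+1=2|V|$, not $2|V|-1$ (see the abstract or the definition in the introduction). With the correct count, the connectedness argument you wrote still works: $|E|=|E_1|+|E_2|\le (2|V_1|-1)+(2|V_2|-1)=2|V|-2<2|V|$, contradiction. But for 2-edge-connectedness the very same one-line count already finishes the job: if $e$ is a bridge with sides $V_1,V_2$, then
\[
2|V|-1=|E|-1=i(V_1)+i(V_2)\le (2|V_1|-1)+(2|V_2|-1)=2|V|-2,
\]
a contradiction. This is exactly what the paper means by ``simple counting arguments'' and why it omits the proof.

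All of your subsequent struggling---the attempt to show $G[V_1]$ is itself a circuit, the matroid-independence argument---stems from the wrong edge count producing an equality where there should be a strict inequality. Note also that your ``clean finish'' contains a second error: you conclude $f(G[V_1])=f(G[V_2])=1$ and then assert ``both are themselves $(2,1)$-circuits'', but $f=1$ means $i(V_i)=2|V_i|-1$, i.e.\ $G[V_i]$ is $(2,1)$-\emph{tight} (critical), not a circuit. So the contradiction you claim (a proper dependent subset of a circuit) does not actually materialise from that line of reasoning.
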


\begin{lem}\label{lem:union}
Let $G=(V,E)$ be a $(2,1)$-circuit. Let $X,Y\subset V$ be critical sets, let $|X\cap Y|\geq 1$ and let $|X\cup Y|\leq |V|-1$. Then $X\cap Y$ and $X\cup Y$ are critical sets and $d(X,Y)=0$.
\end{lem}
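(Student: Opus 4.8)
The plan is to use the standard submodularity argument for the count function $i$, adapted to respect the constraint $|X\cup Y|\leq |V|-1$ so that all sets involved are legitimate proper subsets on which the circuit bound $i(\cdot)\leq 2|\cdot|-1$ applies. First I would recall the submodular inequality
\[
i(X)+i(Y)\leq i(X\cap Y)+i(X\cup Y)-d(X,Y),
\]
valid for any $X,Y\subseteq V$ (the $-d(X,Y)$ term accounts for edges between $X\setminus Y$ and $Y\setminus X$, which are counted on the left but not on the right). Since $X$ and $Y$ are critical, the left-hand side equals $(2|X|-1)+(2|Y|-1)=2(|X|+|Y|)-2 = 2(|X\cap Y|+|X\cup Y|)-2$.

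Next I would bound the right-hand side from above. Because $|X\cap Y|\geq 1$, the set $X\cap Y$ is nonempty; it is also a proper subset of $V$ (it is contained in $X\subsetneq V$), so the circuit condition gives $i(X\cap Y)\leq 2|X\cap Y|-1$. By hypothesis $|X\cup Y|\leq |V|-1$, so $X\cup Y$ is also a proper subset of $V$, hence $i(X\cup Y)\leq 2|X\cup Y|-1$. Substituting, the right-hand side is at most $2(|X\cap Y|+|X\cup Y|)-2 - d(X,Y)$. Comparing with the left-hand side, which equals exactly $2(|X\cap Y|+|X\cup Y|)-2$, forces $d(X,Y)\leq 0$, hence $d(X,Y)=0$, and moreover forces both inequalities $i(X\cap Y)\leq 2|X\cap Y|-1$ and $i(X\cup Y)\leq 2|X\cup Y|-1$ to hold with equality. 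Therefore $X\cap Y$ and $X\cup Y$ are critical sets and $d(X,Y)=0$, as claimed. (One should note that $X\cap Y$ being critical already rules out $|X\cap Y|=|V|$, so there is no degenerate case to exclude there; the only place the proper-subset hypothesis is genuinely used is to license the bound on $i(X\cup Y)$.)

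This proof is essentially a one-line application of submodularity plus the two defining (in)equalities, so there is no real obstacle; the only point requiring care is verifying that the submodular inequality indeed carries the $-d(X,Y)$ correction term and that both $X\cap Y$ and $X\cup Y$ are proper subsets of $V$ so that the circuit bound is applicable to each — the hypotheses $|X\cap Y|\geq 1$ and $|X\cup Y|\leq |V|-1$ are exactly what is needed for this. I would present the displayed submodular inequality, the equality computation for the left side, the substitution, and then read off both conclusions from the resulting chain of inequalities being forced to equalities.
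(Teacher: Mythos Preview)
Your argument is correct and is precisely the ``simple counting argument'' the paper alludes to (the paper omits the proof entirely, referring the reader to \cite{B&J} and \cite{Nix}). One minor expository slip: in your parenthetical you say the $d(X,Y)$ edges ``are counted on the left but not on the right'', but it is the other way around---an edge between $X\setminus Y$ and $Y\setminus X$ lies in $i(X\cup Y)$ but in neither $i(X)$ nor $i(Y)$; in fact what you wrote is an equality $i(X)+i(Y)+d(X,Y)=i(X\cap Y)+i(X\cup Y)$, not merely an inequality. This does not affect the validity of the argument.
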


\begin{lem}\label{lem:admissibleedge}
Let $G=(V,E)$ be a $(2,1)$-circuit. Let $v$ be a node with $N(v)=\{u,w,z\}$. Then removing $v$ and adding $uw$ is not admissible if and only if there exists a critical set $X\subset V$ with $u,w\in X$ and $v,z\notin X$ or $uw\in E$.
\end{lem}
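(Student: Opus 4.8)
The plan is to analyse the 1-reduction directly through the count function $f$. Let $G'$ be obtained from $G$ by deleting $v$ together with its three incident edges and adding the edge $uw$; then $|V(G')|=|V|-1$ and $|E(G')|=|E|-2=2|V|-2=2|V(G')|$, so $f(G')=0$ automatically. Hence, by Lemma \ref{lem:connected}, the reduction produces a $(2,1)$-circuit if and only if $i_{G'}(Y)\le 2|Y|-1$ for every $Y\subsetneq V(G')$, and it is admissible precisely when, in addition, $uw\notin E$ (so that $G'$ is simple with $uw$ a genuinely new edge). Everything then reduces to comparing $i_{G'}(Y)$ with $i_G(Y)$ for subsets $Y\subseteq V\setminus\{v\}$.

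For the easy direction I would argue that the two listed conditions each force non-admissibility: if $uw\in E$ there is nothing to prove, and if $X\subset V$ is critical with $u,w\in X$ and $v\notin X$, then $X\subsetneq V(G')$ (using $z\in(V\setminus\{v\})\setminus X$) and $i_{G'}(X)=i_G(X)+1=2|X|$, which violates the circuit count for $G'$.

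For the converse I would assume the reduction is not admissible and $uw\notin E$; then $G'$ is simple, so $G'$ must fail to be a $(2,1)$-circuit and there is $Y\subsetneq V(G')$ with $i_{G'}(Y)\ge 2|Y|$. If $\{u,w\}\not\subseteq Y$ then $i_{G'}(Y)=i_G(Y)\le 2|Y|-1$, a contradiction, so $u,w\in Y$ and $i_{G'}(Y)=i_G(Y)+1$, forcing $i_G(Y)\ge 2|Y|-1$; since $Y\subsetneq V$ this gives $i_G(Y)=2|Y|-1$, i.e. $Y$ is critical with $v\notin Y$. It then remains to show $z\notin Y$: if $z\in Y$ then $u,w,z\in Y$, so $i_G(Y\cup\{v\})=i_G(Y)+3=2|Y\cup\{v\}|$, and because $Y\subsetneq V(G')=V\setminus\{v\}$ we have $Y\cup\{v\}\subsetneq V$, contradicting that $G$ is a $(2,1)$-circuit. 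Taking $X=Y$ completes the argument.

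The counting in the first three steps is entirely routine; I expect the only point requiring care to be the last step, extracting $z\notin X$, where one adds $v$ back to a would-be critical set containing all of $N(v)$ and uses the properness of $Y$ in $V\setminus\{v\}$ to reach a contradiction. I would present this as the crux of the proof, with the rest following from the degree-count identities for $G'$.
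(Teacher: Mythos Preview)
Your proof is correct and is precisely the kind of counting argument the paper has in mind; note that the paper does not actually prove this lemma but states it among a list of facts ``that can be proved by simple counting arguments'' with references to \cite{B&J} and \cite{Nix}. One minor point: the appeal to Lemma~\ref{lem:connected} is unnecessary, since the equivalence ``$G'$ is a $(2,1)$-circuit $\iff$ $i_{G'}(Y)\le 2|Y|-1$ for every proper $Y$'' follows directly from the definition of a $(2,1)$-circuit once $f(G')=0$ is established.
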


\begin{lem}\label{lem:2edges}
Let $G=(V,E)$ be a 3-connected $(2,1)$-circuit containing a node $v$ with $N(v)=\{w,u,z\}$, $uz\notin E, wz,wu\in E$. Then $v$ is admissible.
\end{lem}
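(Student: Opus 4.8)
Let $G=(V,E)$ be a 3-connected $(2,1)$-circuit with a node $v$, $N(v)=\{w,u,z\}$, where $uz\notin E$ but $wz,wu\in E$. Since $uz\notin E$, the natural candidate for an admissible 1-reduction is the one removing $v$ and adding the edge $uz$. By Lemma \ref{lem:admissibleedge} (applied with the pair $u,z$ playing the role of the pair of neighbours of $v$ whose non-edge we wish to add), this 1-reduction fails to be admissible if and only if there is a critical set $X\subset V$ with $u,z\in X$ and $v,w\notin X$ (the clause ``$uz\in E$'' of the lemma being excluded by hypothesis). So I would argue by contradiction: suppose $v$ is not admissible; then no single 1-reduction at $v$ works. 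The 1-reduction adding $uz$ fails, so there is a critical set $X$ with $u,z\in X$, $v,w\notin X$. Similarly, consider the 1-reduction adding $uw$: since $uw\in E$ this is immediately non-admissible by Lemma \ref{lem:admissibleedge}, and likewise for $wz$. So the only route to admissibility was via the non-edge $uz$, and that route is blocked by $X$.

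Next I would pin down $X$ using $w$. Since $w\notin X$ but $w$ is adjacent to both $u\in X$ and $z\in X$, the set $X+w$ satisfies $i(X+w)\ge i(X)+2 = (2|X|-1)+2 = 2|X+w|-1$, so $X+w$ is critical (it cannot exceed $2|X+w|-1$, since $G$ is a $(2,1)$-circuit and $X+w\subsetneq V$, as $v\notin X+w$). Now I want to also bring in $v$. The set $X+w+v$ has $v$ of degree $3$ with all three neighbours $u,w,z$ already inside, so $i(X+w+v)\ge i(X+w)+3 = 2|X+w|-1+3 = 2|X+w+v|$, which contradicts the $(2,1)$-circuit count unless $X+w+v = V$. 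Hence $V = X\cup\{v,w\}$, i.e. $X = V-\{v,w\}$.

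Now the contradiction must come from 3-connectivity. Since $X=V-\{v,w\}$ is critical, $i(X)=2|X|-1$, so the number of edges leaving $X$ is $|E|-i(X)-d(v)-d(w)+\mathbf{1}[vw\in E]$; more directly, $2|V| = |E| = i(X) + (\text{edges between }X\text{ and }\{v,w\}) + i(\{v,w\})$, giving (edges between $X$ and $\{v,w\}$) $= 2|V| - (2|X|-1) - i(\{v,w\}) = 2(|X|+2) - 2|X| + 1 - i(\{v,w\}) = 5 - i(\{v,w\})$. Edge $vw$ may or may not be present. The three edges $vu,vw,vz$ account for $v$'s degree: if $vw\in E$ then $v$ sends $2$ edges to $X$ (namely $vu,vz$) and $w$ sends $5-1-2 = 2$ edges to $X$; if $vw\notin E$ then $v$ sends all $3$ edges to $X$ and $w$ sends $5-0-3=2$ edges to $X$. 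Either way $w$ has exactly $2$ neighbours in $X$, and these are $u$ and $z$. So $\deg(w) = 2 + \mathbf{1}[vw\in E] \le 3$; since $\delta(G)\ge 2$ always and in fact $d(w)\ge 3$ in a $(2,1)$-circuit (Lemma \ref{lem:V3}, or a direct count), we get $\deg(w)=3$ with $N(w)=\{u,z,v\}$ and $vw\in E$. But then $\{v,w\}$ together have all of their neighbours in $\{u,z\}\cup\{v,w\}$, so $\{u,z\}$ is a $2$-cut (removing $u$ and $z$ separates $\{v,w\}$ from the rest, which is nonempty since $|V|\ge 5$ and $|X|=|V|-2\ge 3 > |\{u,z\}|$), contradicting $3$-connectivity.

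\textbf{Main obstacle.} The delicate point is the bookkeeping that forces $X=V-\{v,w\}$ and then extracts the $2$-separation: I must be careful that the successive augmentations $X\subseteq X+w\subseteq X+w+v$ stay proper subsets of $V$ (so that the circuit inequality $i(\cdot)\le 2|\cdot|-1$ applies) right up to the step that yields $V$ itself, and I must handle the two subcases $vw\in E$ and $vw\notin E$ in the final count. I also need $|V|\ge 5$ (true since $K_5$ is the smallest simple $(2,1)$-circuit) so that $V-\{v,w\}$ genuinely contains a vertex outside $\{u,z\}$, making the $2$-cut non-degenerate and contradicting $3$-connectivity; alternatively one can phrase the contradiction via Lemma \ref{lem:connected} and a direct connectivity argument rather than invoking a minimum-order fact.
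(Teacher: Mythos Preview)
Your proof is correct and follows exactly the kind of ``simple counting argument'' the paper alludes to (the paper does not give its own proof of this lemma, so there is no alternative approach to compare against). The chain $X \subsetneq X+w \subsetneq X+w+v = V$ is the right idea, and extracting the $2$-separator $\{u,z\}$ from $X = V-\{v,w\}$ is the natural endgame.

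Two small clean-ups. First, the case split on whether $vw\in E$ is unnecessary: since $w\in N(v)$ by hypothesis, $vw\in E$ automatically, so only that branch occurs. Second, your invocation of Lemma~\ref{lem:V3} for $d(w)\ge 3$ is a mis-citation (that lemma says $G[V_3]$ is a forest); the ``direct count'' you mention is what you want, or simply observe that $vw\in E$ already forces $d(w)=3$ once you know $w$ has exactly two neighbours in $X$. With those tidied, the argument is clean: $X+w$ critical forces $w$ to have \emph{exactly} two neighbours in $X$ (namely $u,z$), so $N(w)=\{u,z,v\}$, and then $\{u,z\}$ separates $\{v,w\}$ from $V-\{u,z,v,w\}$, which is nonempty since $|V|\ge 6$ (any simple $(2,1)$-circuit with a node has $|V|\ge 6$, as $K_5$ is $4$-regular), contradicting $3$-connectivity.
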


\begin{lem}\label{lem:1edge}
Let $G=(V,E)$ be a $(2,1)$-circuit containing a node $v$ with $N(v)=\{w,u,z\}$, $uz,wu\notin E, wz\in E$. Then $v$ is admissible.
\end{lem}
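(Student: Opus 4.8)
The plan is to proceed exactly as in Lemma \ref{lem:2edges}, using Lemma \ref{lem:admissibleedge} to reduce the problem to finding a pair of neighbours of $v$ that can be joined without violating any count. Since $v$ is a node with $N(v)=\{w,u,z\}$ and the only present edge among these is $wz$, there are two candidate 1-reductions to consider: delete $v$ and add $uw$, or delete $v$ and add $uz$. By Lemma \ref{lem:admissibleedge}, the first fails only if there is a critical set $X$ with $u,w\in X$, $v,z\notin X$; the second fails only if there is a critical set $Y$ with $u,z\in Y$, $v,w\notin Y$ (the ``$uw\in E$'' and ``$uz\in E$'' alternatives in that lemma are excluded by hypothesis). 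So I would assume for contradiction that both $X$ and $Y$ exist.

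The next step is to combine these two critical sets. Both contain $u$, so $X\cap Y\ni u$ is nonempty. I want to apply Lemma \ref{lem:union}, which requires $|X\cup Y|\leq |V|-1$; here this holds because $v\notin X\cup Y$, so $|X\cup Y|\leq |V|-1$ automatically. Hence Lemma \ref{lem:union} gives that $X\cup Y$ is critical and $d(X,Y)=0$. Now consider the set $X\cup Y\cup\{v\}$: it contains $w\in X$, $z\in Y$, $u\in X\cap Y$, and $v$ itself, so it induces the three edges $vw,vu,vz$ plus all edges induced by $X\cup Y$. Thus $i(X\cup Y\cup\{v\}) \geq i(X\cup Y) + 3 = 2|X\cup Y| - 1 + 3 = 2(|X\cup Y|+1)+0 = 2|X\cup Y\cup\{v\}|$, which already violates the $(2,1)$-circuit count $i(W)\leq 2|W|-1$ for $W=X\cup Y\cup\{v\}$ — provided $X\cup Y\cup\{v\}\neq V$, i.e.\ provided this set is proper. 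If $X\cup Y\cup\{v\}=V$, then since $|E|=2|V|$ (as $G$ is a $(2,1)$-circuit) the inequality $i(X\cup Y\cup\{v\})\geq 2|V|$ forces $i(V)=|E|=2|V|$, contradicting $f(G)=0$ in the form $|E|=2|V|-1+1$... I need to be slightly more careful here: a $(2,1)$-circuit has $|E| = 2|V|-1+1 = 2|V|$, so $i(V)=2|V|$ is in fact consistent. So in the case $X\cup Y\cup\{v\}=V$ I instead need to use that $d(X,Y)=0$ more aggressively, or note that actually $X\cup Y$ together with $v$ being all of $V$ combined with $d(X,Y)=0$ means $w$ and $z$ and everything splits, contradicting 2-edge-connectedness (Lemma \ref{lem:connected}) unless one side is trivial — and I would chase exactly which vertices lie in $X\setminus Y$, $Y\setminus X$, $X\cap Y$ to force a contradiction with connectivity.

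The main obstacle I anticipate is precisely the boundary case where $X\cup Y\cup\{v\}$ exhausts $V$, since there the naive count is tight rather than violated; handling it will require using the extra information $d(X,Y)=0$ from Lemma \ref{lem:union} (the sets $X\setminus Y$ and $Y\setminus X$ have no edges between them, so removing the few edges at $u$ and $v$ disconnects $G$, contradicting Lemma \ref{lem:connected} or the non-existence of small cutsets) — or, more cleanly, observing that $w\in X\setminus Y$ and $z\in Y\setminus X$ would then be separated, and tracking degrees carefully. A secondary point to verify is that this lemma does \emph{not} require 3-connectivity (unlike Lemma \ref{lem:2edges}) — the statement only assumes ``$(2,1)$-circuit'', so I must make sure the argument only uses 2-edge-connectedness from Lemma \ref{lem:connected}, which the union argument above does.
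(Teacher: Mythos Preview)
Your framework is right---reduce to two critical sets $X$ (blocking $uw$) and $Y$ (blocking $uz$) via Lemma~\ref{lem:admissibleedge}, intersect them at $u$, and apply Lemma~\ref{lem:union}---but you miss the clean finish and, as a result, get stuck on the boundary case $X\cup Y\cup\{v\}=V$. The conclusion of Lemma~\ref{lem:union} is not only that $X\cup Y$ is critical but that $d(X,Y)=0$. Now observe that $w\in X\setminus Y$ (since $w\in X$ and $w\notin Y$) and $z\in Y\setminus X$ (since $z\in Y$ and $z\notin X$), while $wz\in E$ by hypothesis. Hence $d(X,Y)\geq 1$, contradicting $d(X,Y)=0$. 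That's the whole argument; no boundary case arises and no connectivity beyond Lemma~\ref{lem:connected} is needed (indeed, you never even invoke 2-edge-connectedness).

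The reason your counting detour stalls is that your inequality $i(X\cup Y\cup\{v\})\geq i(X\cup Y)+3$ never uses the hypothesis $wz\in E$: the edge $wz$ already lies inside $X\cup Y$ and contributes to $i(X\cup Y)$, so adding $v$ picks up only the three edges $vw,vu,vz$. Without the hypothesis $wz\in E$ the lemma is false---Lemma~\ref{lem:silly} shows that a non-admissible node with no edges among its neighbours can have exactly $X\cup Y=V-v$---and the ``boundary case'' you encounter is precisely this situation. So the edge $wz$ is not an afterthought to be used for a connectivity contradiction at the end; it is the crux, and its role is to witness $d(X,Y)\geq 1$.
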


Let $V_3=\{v \in V:d(v)=3\}$. Let $V_3^*\subseteq V_3$ be the subset of degree three vertices not contained in copies of $K_4$.

\begin{lem}\label{lem:V3}
Let $G$ be a $(2,1)$-circuit. Then $G[V_3]$ is a (possibly empty) forest.
\end{lem}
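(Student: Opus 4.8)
The plan is to show that if $G[V_3]$ contained a cycle then $G$ would fail the sparsity bound on a proper subgraph. Let $C$ be the vertex set of a shortest cycle contained entirely in $V_3$, say $|C|=k\geq 3$. Every vertex of $C$ has degree exactly $3$ in $G$, and on the subgraph induced by $C$ each such vertex is incident with the two cycle edges at it; moreover, if the cycle is induced (which I can assume by taking a shortest one, or else argue separately for triangles), then $i_G(C)=k$. I would then count: $i_G(C)=k$ while $2|C|-1=2k-1$, so for $k\geq 2$ this is consistent with criticality, not yet a contradiction. So the induced-subgraph bound alone is not enough; the sharper observation is that the degree-three constraint forces $C$ to be "cheap" globally.

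The key step is therefore to look not at $C$ itself but at the number of edges leaving $C$. Each vertex of $C$ has degree $3$ and uses $2$ of its incident edges inside the cycle (when $C$ is induced), so exactly $k$ edges leave $C$, i.e. $d(V\setminus C, C)=k$ in the notation of the paper. Now consider the set $X = V\setminus C$. Then $i_G(X) = |E| - i_G(C) - d(X,C) = 2|V| - k - k = 2|V|-2k = 2|X|$, which violates the requirement $i_G(X)\le 2|X|-1$ for the proper subset $X\subsetneq V$ (note $X\ne\emptyset$ and $X\ne V$ since $3\le k<|V|$, as a $(2,1)$-circuit certainly has more than $k$ vertices once one checks small cases — or, if $X=\emptyset$, then $V=C$ is a cycle, which has $|E|=|V|\ne 2|V|$, absurd). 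This contradiction shows no cycle of $V_3$ can be induced.

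It then remains to handle cycles in $G[V_3]$ that are not induced in $G$, i.e. to reduce to the induced case. If $G[V_3]$ contains any cycle, it contains a chordless one, say on vertex set $C$; the chordlessness is taken within $G[V_3]$. But a chord of $C$ in $G$ would be an edge between two vertices of $V_3$, hence an edge of $G[V_3]$, contradicting chordlessness there. So any chordless cycle of $G[V_3]$ is in fact an induced cycle of $G$, and the previous paragraph applies. Hence $G[V_3]$ is acyclic, i.e. a forest (possibly empty, e.g. when $\delta(G)=4$ and $V_3=\emptyset$, or when the degree-three vertices form an independent set).

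The main obstacle I anticipate is bookkeeping around the edge count $i_G(C)=k$ and $d(X,C)=k$: one must be careful that each edge at a vertex of $C$ is correctly classified as interior to $C$ or crossing, which is exactly where chordlessness (equivalently, being an induced cycle) is used — a chord would make $i_G(C)>k$ and reduce the crossing count, breaking the computation. A minor secondary point is ruling out the degenerate possibilities $X=\emptyset$ (handled by noting a $(2,1)$-circuit is not a single cycle) and confirming $X\ne V$, but these are immediate. Everything else is the standard sparsity-counting template already used for Lemmas \ref{lem:connected}–\ref{lem:1edge}.
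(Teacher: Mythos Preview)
Your proof is correct and is precisely the kind of simple counting argument the paper alludes to (the paper omits the proof entirely, referring to analogous results in \cite{B&J} and \cite{Nix}). One small simplification worth noting: the reduction to an induced (chordless) cycle is unnecessary. If $C$ is the vertex set of \emph{any} cycle in $G[V_3]$ with $|C|=k$, then $i_G(C)\geq k$, and since every vertex of $C$ has degree $3$ in $G$, the degree-sum identity $3k = 2\,i_G(C) + d(C, V\setminus C)$ yields
\[
i_G(V\setminus C) \;=\; 2|V| - i_G(C) - d(C,V\setminus C) \;=\; 2|V| - 3k + i_G(C) \;\geq\; 2|V\setminus C|,
\]
which is already the desired violation of sparsity. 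Your chordless-cycle step recovers the equality case of this inequality, but the inequality alone suffices and avoids the extra reduction.
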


\subsection{Finding Nodes}
\label{subsec:simple}

We now establish connectivity conditions that guarantee that, in a 3-connected, essentially 5-edge-connected $(2,1)$-circuit $G$ with $\delta(G)=3$, there exist nodes not contained in copies of $K_4$.
Since (the vertex set of) $K_4$ is semi-critical, this is significantly 
harder than the corresponding result in \cite{Nix}. We will first prove two lemmas for $(2,1)$-circuits that contain proper critical sets, before proving their (simpler) analogues for $(2,1)$-circuits that contain no proper critical sets. We make this distinction as we can give a weaker edge-connectivity assumption for $(2,1)$-circuits that contain proper critical sets, than for $(2,1)$-circuits that do not (see Theorem \ref{thm:Hen2admissible}).

\subsubsection{Proper critical sets} 

We first consider the case when there exists at least one proper critical set.

\begin{lem}
\label{lem:proper_implies_deg_3}
Let $G\in\mathcal{M}$ be essentially 4-edge-connected. Suppose that $G$ contains a proper critical set $X$. Then $|V_3-X|\geq 2$ (and hence $\delta(G)=3$).
\end{lem}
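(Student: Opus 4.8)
The plan is to use a counting argument combined with the criticality condition, exploiting the fact that $X$ is a proper critical set in a $(2,1)$-circuit $G \in \mathcal{M}$ (so $G$ is either $3$-connected or has fewer than four vertices, and satisfies the multigraph restrictions in the definition of $\mathcal{M}$). Set $Y = V \setminus X$; since $X$ is proper, $|X| \le |V|-2$, so $|Y| \ge 2$. Because $G$ is a $(2,1)$-circuit, $|E| = 2|V|-1$ and $i(X) = 2|X|-1$ (criticality). Counting edges by where they lie gives $|E| = i(X) + i(Y) + d(X,Y)$, hence $i(Y) + d(X,Y) = (2|V|-1) - (2|X|-1) = 2|Y|$. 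Since $i(Y) \le 2|Y| - 1$ (as $Y \subsetneq V$ is a proper subset — here I should check $Y \ne V$, which holds as $X \ne \emptyset$), we get $d(X,Y) \ge 1$; more importantly, combining $i(Y) \le 2|Y|-1$ with $i(Y) = 2|Y| - d(X,Y)$ forces $d(X,Y) \ge 1$ and, if I want a stronger bound, I'll need to track how small $i(Y)$ must be.

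The key step is to bound the sum of degrees of vertices in $Y$. We have $\sum_{v \in Y} d(v) = 2\,i(Y) + d(X,Y) = 2(2|Y| - d(X,Y)) + d(X,Y) = 4|Y| - d(X,Y)$. So the average degree over $Y$ is $4 - d(X,Y)/|Y| < 4$, which already shows $Y$ contains at least one vertex of degree $\le 3$; since $\delta(G) \ge 3$ always (by Lemma~\ref{lem:connected} and $G$ being a $(2,1)$-circuit, minimum degree is $3$ or $4$), that vertex has degree exactly $3$, giving $|V_3 \cap Y| \ge 1$. To push this to $|V_3 - X| \ge 2$, suppose for contradiction that $Y$ contains exactly one vertex of degree $3$ and all others have degree $\ge 4$. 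Then $\sum_{v \in Y} d(v) \ge 3 + 4(|Y|-1) = 4|Y| - 1$, so $d(X,Y) \le 1$. Now I invoke connectivity: if $d(X,Y) = 1$ then that single edge is a cut-edge (bridge), contradicting $2$-edge-connectedness from Lemma~\ref{lem:connected}; if $d(X,Y) = 0$ then $G$ is disconnected, again contradicting Lemma~\ref{lem:connected}. This rules out the case, so $|V_3 - X| \ge 2$, and in particular $V_3 \ne \emptyset$, i.e. $\delta(G) = 3$.

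I expect the main obstacle to be the bookkeeping at the boundary: I must be careful that $Y = V \setminus X$ is genuinely a proper subset so that the inequality $i(Y) \le 2|Y|-1$ applies (this needs $X \ne \emptyset$, which follows since $i(X) = 2|X|-1 \ge 1$ forces $|X| \ge 1$), and that $|Y| \ge 2$ so that "$Y$ has two vertices" is not vacuous (this is exactly properness of $X$). A secondary subtlety is whether the essential $4$-edge-connectivity hypothesis is actually needed for this particular lemma or only $2$-edge-connectivity; from the argument above it looks like plain $2$-edge-connectivity (Lemma~\ref{lem:connected}) suffices to exclude $d(X,Y) \le 1$ as long as both sides of the cut have at least two vertices — and $|X| \ge 2$ holds because $i(X) = 2|X|-1$ and a single vertex induces no edges, forcing $2|X| - 1 \le i(X) \le \binom{|X|}{2}$, impossible for $|X| = 1$. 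So the essential $4$-edge-connectivity may be a convenience for later lemmas rather than strictly required here; I would state the proof using only what is needed but keep the hypothesis as given. If the multigraph setting of $\mathcal{M}$ causes trouble (loops contributing to degree, multi-edges in the cut), I would note that the definition of $\mathcal{M}'$ forces any vertex on a loop or multi-edge to have degree $\ge 4$, so such vertices never lie in $V_3$ and the degree-$3$ vertices we find are incident only to simple edges, keeping the counting honest.
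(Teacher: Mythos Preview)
Your overall strategy---a degree-sum count over $Y=V\setminus X$---is exactly the paper's approach, but there is a genuine arithmetic error that invalidates your conclusions. A $(2,1)$-circuit has $|E|=2|V|$, not $|E|=2|V|-1$ (recall $|E|=k|V|-\ell+1$, or equivalently $f(G)=2|V|-|E|=0$ as stated in Section~\ref{sec:4reg}). With the correct edge count you get
\[
i(Y)+d(X,Y)=2|V|-(2|X|-1)=2|Y|+1,\qquad \sum_{v\in Y}d(v)=2i(Y)+d(X,Y)=4|Y|+2-d(X,Y),
\]
which is off by $+2$ from your formula. Now if $|V_3\cap Y|\le 1$ then $\sum_{v\in Y}d(v)\ge 4|Y|-1$, forcing $d(X,Y)\le 3$, and to exclude this you genuinely need the edge-cut between $X$ and $V\setminus X$ to have size at least~$4$. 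Since $|X|\ge 2$ (criticality) and $|Y|\ge 2$ (properness), the cut is non-trivial, and essential $4$-edge-connectivity gives $d(X,Y)\ge 4$; mere $2$-edge-connectivity from Lemma~\ref{lem:connected} is not enough.

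So your remark that the essential $4$-edge-connectivity hypothesis ``may be a convenience\dots rather than strictly required here'' is wrong: it is precisely what closes the argument, and the paper's proof uses it at exactly this point. Once you fix the edge count and invoke the hypothesis as intended, your write-up matches the paper's proof.
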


\begin{proof}
Let $E'$ be the set of edges between $X$ and $V-X$.
The sum of the degrees of vertices in $X$ is $4|X|-2+|E'|$. The total sum of the degrees of vertices in $G$ is $4|V|$. So the sum of the degrees in $V-X$  is $4(|V|-|X|)+2-|E'|$. Hence, as $|E'|\geq 4$, $\delta(G)\geq 3$ and the average degree of $G$ is four, there are at least two nodes that are not in $X$.
\end{proof}

\begin{lem}\label{lem:criticalsemi}
Let $G\in\mathcal{M}$ be 3-connected and essentially 4-edge-connected. 
Let $\W=\{W_i\mid 1\leq i\leq n:W_i\text{ is critical or semi-critical}\,\},$
where $W_1$ is critical and $|W_i|\geq 2$.  Let $Y=V-(\cup_{i=1}^n W_i)$. Suppose that either
\begin{enumerate}
\item $|Y|\geq 2$; or
\item $\A:=\cup_{i=1}^n G[W_i]$ is disconnected.
\end{enumerate}
Then $Y$ contains two nodes.
\end{lem}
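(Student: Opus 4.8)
The plan is a degree count, in the spirit of the proof of Lemma~\ref{lem:proper_implies_deg_3}. Write $Z=\bigcup_{i=1}^{n}W_i$ and $Y=V-Z$, and suppose for contradiction that $Y$ contains at most one node. Since $G$ is a $(2,1)$-circuit we have $|E|=2|V|$ and $\delta(G)\ge 3$, so $\sum_{v\in V}d(v)=4|V|$ and $\sum_{v\in Y}(4-d(v))\le 1$; equivalently $\sum_{v\in Z}d(v)\le 4|Z|+1$. Hence it suffices to prove $\sum_{v\in Z}d(v)\ge 4|Z|+2$, which, via $\sum_{v\in Z}d(v)=2\,i(Z)+d(Z,Y)$, is a statement purely about the edges inside and leaving $Z$.

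First I would introduce the auxiliary subgraph $\A=\bigcup_{i}G[W_i]$, which has vertex set $Z$; let $C_1,\dots,C_m$ be its components, labelled so that $W_1\subseteq C_1$. A preliminary remark is that each $W_i$ induces a connected subgraph of $G$ with no isolated vertex (otherwise a proper subset of $W_i$ would violate the critical or semi-critical count), so $\A$ has minimum degree at least one, each $C_j$ has at least two vertices, and each $W_i$ lies inside a single $C_j$. The first substantive step is to show $|C_j|\le|V|-2$ for every $j$: if $|C_j|\in\{|V|-1,|V|\}$ then the at most one missing vertex would have to be an isolated component of $\A$, so $\A$ is connected with $|Y|\le 1$, contradicting \emph{both} alternative hypotheses (1) and (2). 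This bound is exactly what supplies ``properness'' in the next step.

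Next I would merge. Because every edge of $\A$ lies inside some single $W_i$, the sets $W_i$ contained in a fixed $C_j$ have connected intersection graph (a separation of that graph would separate $C_j$ in $\A$), so $C_j$ can be built up by adjoining the $W_i\subseteq C_j$ in an order in which each new one meets the running union. Applying submodularity in the form $i(A\cup B)+i(A\cap B)=i(A)+i(B)+d(A,B)$, together with $i(A\cap B)\le 2|A\cap B|-2$ when $B$ is semi-critical (and Lemma~\ref{lem:union} when both sets are critical), and using $|C_j|\le|V|-2$ to keep all intermediate unions proper, I obtain that $C_1$ is critical, so $i(C_1)=2|C_1|-1$, and that $i(C_j)\ge 2|C_j|-2$ for every $j$. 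Separately, for each $j$ I would bound the edge boundary $\partial C_j$ (the edges of $G$ with exactly one end in $C_j$): since $2\le|C_j|\le|V|-2$ and $G[C_j]$ is connected, $\partial C_j$ is an edge cut with both sides of size at least two; $3$-connectedness gives $|\partial C_j|\ge 3$, and a short argument (using $\delta(G)\ge 3$, connectedness of $G$, and $|C_j|\le|V|-2$) shows that a $3$-edge cut in this position must be non-trivial, which essential $4$-edge-connectivity forbids; hence $|\partial C_j|\ge 4$.

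Finally I would assemble. Writing $e_{\mathrm{b}}$ for the number of edges of $G$ joining distinct components $C_j$, we have $i(Z)=\sum_{j}i(C_j)+e_{\mathrm{b}}\ge 2|Z|-2m+1+e_{\mathrm{b}}$ and $\sum_{j}|\partial C_j|=2e_{\mathrm{b}}+d(Z,Y)\ge 4m$, so
\[
\sum_{v\in Z}d(v)=2\,i(Z)+d(Z,Y)\ \ge\ 2\bigl(2|Z|-2m+1+e_{\mathrm{b}}\bigr)+d(Z,Y)\ =\ 4|Z|-4m+2+\sum_{j=1}^{m}|\partial C_j|\ \ge\ 4|Z|+2,
\]
the required contradiction. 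I expect the main obstacle to be the merging step: stating the intersection-graph connectivity cleanly and carrying the ``critical $\cup$ (semi-)critical is critical'' bookkeeping through all the unions while tracking which intermediate sets are proper (the case $|C_j|=|V|-1$, eliminated in the previous step, is precisely what could otherwise break this). The verification, in the boundary step, that a $3$-edge cut separating a component of $\A$ must be non-trivial is the other place where care is needed.
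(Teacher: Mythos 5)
Your proof is correct and follows essentially the same route as the paper's: both decompose $\A$ into its connected components, use the submodular merging of critical/semi-critical sets to show each component $C_j$ satisfies $i(C_j)\ge 2|C_j|-2$ with the component containing $W_1$ critical, invoke essential $4$-edge-connectivity to get at least four boundary edges per component, and conclude by the degree count $\sum_{v\in V}(4-d(v))=0$. Your version merely rephrases the paper's deficiency sums $f_j,g_j$ in terms of $i(Z)$, $e_{\mathrm{b}}$ and $\partial C_j$, and spells out two details the paper leaves implicit (why $|C_j|\le|V|-2$ and why a $3$-edge boundary cut would be non-trivial).
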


\begin{proof}
Since $G\in \mathcal{M}$, with vertices of $V$ labelled as $v_1,v_2,\dots, v_{|V|}$, we have
\begin{eqnarray} \sum_{i=1}^{|V|}(4-d_G(v_i))=0.\label{eqn1} \end{eqnarray}
Let $A_1,\dots,A_m$ be the vertex sets of the connected components of $\A$. For any pair of semi-critical sets $W_i,W_j\in \mathcal{W}$ we have 
\begin{eqnarray} 2(|W_i|+|W_j|)-3 &=& 2|W_i\cup W_j|-1 + 2|W_i\cap W_j|-2 \nonumber\\ &\geq & i(W_i\cup W_j)+i(W_i\cap W_j)\nonumber\\ &=& i(W_i)+i(W_j)+d(W_i,W_j)\nonumber\\ &=& 2(|W_i|+|W_j|)-4+d(W_i,W_j)\label{eqnnew} \end{eqnarray}
so either $d(W_i,W_j)=0$ or equality holds and $d(W_i,W_j)=1$. 
Hence, whenever $W_i$ and $W_j$ have non-empty intersection, the union is semi-critical or critical. Similarly if $W_i$ is critical, $W_j$ is semi-critical and they have a non-empty intersection then the union is critical. (Note two critical sets with a non-empty intersection gives $V$, contradicting (1) and (2).)

Reordering if necessary, let $A_1,\dots, A_r$ be critical sets and let $A_{r+1},\dots, A_m$ be semi-critical sets. 
Define
\[ f_j=\sum_{v\in A_j}(4-d_{G[A_j]}(v)). \]
Hence, for $1\leq j\leq r$, $f_j=2$ and for $r+1\leq j\leq m$, $f_j=4$. As either (1) or (2) holds and there are no non-trivial 3-edge-cutsets, there exist four edges $x_\ell y_\ell$, $1\leq \ell \leq 4$, where $x_\ell \in A_j$ and $y_\ell \in V-A_j$, where the $x_\ell \in A_j$ (respectively the $y_\ell \in V-A_j$) are not necessarily all distinct. Define 
\[ g_j=\sum_{v\in A_j} (4-d_G(v)). \]
Hence, for $1\leq j \leq r$, $g_j\leq -2$ and, for $r+1\leq j \leq m$, $g_j\leq 0$. Now
\begin{eqnarray} \sum_{i=1}^m g_j = -2r\leq -2 \label{eqn2} \end{eqnarray}
since $r\geq 1$. The result follows by comparing Equations (\ref{eqn1}) and (\ref{eqn2}).
\end{proof}

\begin{lem}\label{lem:properlem}
Let $G=(V,E) \in\mathcal{M}$ be 3-connected and essentially 4-edge-connected. Suppose that $G$ contains a proper critical set $X$. Then $|V_3^*-X|\geq 2$.
\end{lem}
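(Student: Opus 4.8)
The plan is to bootstrap from Lemma \ref{lem:proper_implies_deg_3}, which already gives $|V_3 - X| \geq 2$, and to upgrade this to $|V_3^* - X| \geq 2$ by ruling out the possibility that too many of the nodes outside $X$ lie inside copies of $K_4$. The obstruction to overcome is exactly that $K_4$ is semi-critical (as a vertex set, $i(K_4) = 6 = 2\cdot 4 - 2$), so a node sitting in a $K_4$ "uses up" its deficiency inside that semi-critical block rather than contributing to the global count; this is precisely the phenomenon Lemma \ref{lem:criticalsemi} was built to handle. So the strategy is: suppose for contradiction that $|V_3^* - X| \leq 1$, meaning at least all but one of the (at least two) nodes outside $X$ are each contained in a copy of $K_4$.

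First I would set up the family $\W$ to feed into Lemma \ref{lem:criticalsemi}. Take $W_1 = X$ (critical, and $|X| \geq 2$ since a singleton cannot be critical). For each node $v \in V_3 - X$ that lies in a copy of $K_4$, let $K_v$ denote (the vertex set of) such a copy; each $K_v$ is semi-critical with $|K_v| = 4 \geq 2$. Let $\W = \{X\} \cup \{K_v : v \in (V_3 \setminus V_3^*) - X\}$ and put $Y = V - \bigcup_{W \in \W} W$. The aim is to verify that one of the two hypotheses of Lemma \ref{lem:criticalsemi} holds, so that $Y$ contains two nodes. But by construction every node of $V_3 - X$ is either in some $K_v \subseteq \bigcup \W$ or is the (at most one) exceptional node of $V_3^* - X$; since $X$ contains no nodes relevant here beyond itself, $Y \cap V_3 \subseteq (V_3^* - X)$ has size at most $1$. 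That contradicts "$Y$ contains two nodes", completing the argument — provided we can always arrange hypothesis (1) or (2) of Lemma \ref{lem:criticalsemi}.

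The main obstacle is therefore verifying that hypothesis (1) ($|Y| \geq 2$) or hypothesis (2) ($\A := \bigcup_{W \in \W} G[W]$ is disconnected) holds; this is where the real case analysis lives. If $\A$ is disconnected we are done immediately via (2). So assume $\A$ is connected. Then I would argue about $|Y|$: if $|Y| \geq 2$ we invoke (1). The remaining danger is $|Y| \leq 1$, i.e. $\bigcup \W$ covers all but at most one vertex of $G$. Here I would derive a contradiction directly from the count, rather than from Lemma \ref{lem:criticalsemi}: combine $X$ with the various $K_v$'s using the union/intersection behaviour established in the proof of Lemma \ref{lem:criticalsemi} (inequality \eqref{eqnnew}) — overlapping members merge into a single critical-or-semi-critical set — to reduce $\W$ to a pairwise-disjoint family, and then run the degree-sum computation of Lemma \ref{lem:proper_implies_deg_3} (each critical block contributes $\leq -2$ to $\sum_v (4 - d_G(v))$ after accounting for the $\geq 4$ boundary edges, each semi-critical block $\leq 0$), noting that $X$ already forces a strict deficit; if $Y$ has at most one vertex there is nowhere for the compensating positive terms (the nodes) to come from, contradicting \eqref{eqn1}. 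One also has to handle the degenerate possibility that some $K_v$ meets $X$ or that two copies $K_v, K_{v'}$ coincide or overlap — but overlaps only help, since they shrink $\W$ and, by \eqref{eqnnew}, preserve criticality/semi-criticality of the merged set, and a $K_v$ contained in $X$ simply doesn't arise because its node $v \notin X$. Finally, I'd double-check the edge-cut hypotheses needed by Lemma \ref{lem:criticalsemi}: $G$ is $3$-connected and essentially $4$-edge-connected by assumption, and every $A_j$ in that lemma's proof needs four edges to its complement, which holds because essential $4$-edge-connectivity forbids non-trivial $3$-edge-cutsets and the blocks $X$, $K_v$ each have at least two vertices hence are not trivial sides of a cut.
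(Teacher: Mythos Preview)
Your approach is the same as the paper's for the easy branch: set up $\W$ with $X$ together with the relevant $K_4$'s, and if $|Y|\geq 2$ or $\A$ is disconnected invoke Lemma~\ref{lem:criticalsemi} to find two nodes in $Y$, which must then lie in $V_3^*-X$. That part is fine.

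The gap is in the remaining case ($\A$ connected and $|Y|\leq 1$). Your proposed fix --- merge overlapping members into a single critical/semi-critical block and rerun the degree-sum count --- does not produce a contradiction. With $\A$ connected there is a single block $A_1$, and the inequality $g_1=\sum_{v\in A_1}(4-d_G(v))\leq -2$ (respectively $\leq 0$) used in Lemma~\ref{lem:criticalsemi} comes from the existence of at least four edges from $A_1$ to $V\setminus A_1$. That bound is a consequence of essential $4$-edge-connectivity \emph{for non-trivial cuts}, and here $|V\setminus A_1|\leq 1$, so the cut is trivial (or empty) and nothing forces four crossing edges. Concretely, if $|Y|=1$ with $Y=\{y\}$ then $\sum_{v\in A_1}(4-d_G(v))=d_G(y)-4$, which is $-1$ when $y$ is a node and $\geq 0$ otherwise; if $|Y|=0$ the sum is exactly $0$. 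Neither contradicts~\eqref{eqn1}. Relatedly, the ``merge'' step can collapse $\W$ all the way to $V$, which is neither critical nor semi-critical, so the inductive description breaks down there too.

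The paper handles this hard case by an entirely different, structural route: it shows each $B_i$ meeting $X$ satisfies $|X\cap B_i|=1$ and $d(X,B_i)=0$ (the other intersection sizes force $X\cup B_i=V$, contradicting properness or $3$-connectivity), then builds an auxiliary intersection graph $G^\dagger$ on $\{X,B_1,\ldots,B_n\}$, proves $G^\dagger$ is a tree, and concludes that $\A$ has a cut-vertex. Since $|Y|\leq 1$, that cut-vertex yields a cut-vertex or cut-pair in $G$, contradicting $3$-connectivity. You need this (or an equivalent structural argument exploiting $3$-connectivity directly) to close the $|Y|\leq 1$ case; counting alone is not enough.
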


\begin{proof}
If $G$ contains no copies of $K_4$, then $V_3^*=V_3$ and Lemma \ref{lem:proper_implies_deg_3} yields the result.

So suppose that $B_1,\ldots, B_n$ are the vertex sets of all the copies of $K_4$ which are not subsets of $X$. Let $Y=V-(\cup_{i=1}^n B_i)-X$ and $\A=G[X]\cup(\cup_{i=1}^n G[B_i])$. If $|Y|\geq 2$ or $\A$ is disconnected, then we may apply Lemma \ref{lem:criticalsemi} to yield the result.

Hence, suppose that $|Y|\leq 1$ and $\A$ is connected. So, 
without loss of generality, there exists a $t\geq 1$, such that $X\cap B_i\neq \emptyset$, for all $1\leq i\leq t$.
For each of these $B_i$, and a subset $B'\subset B_i$, we have $i(B')=2|B'|-\alpha$ where $\alpha=2$ when $|B'|\in \{1,4\}$ and $\alpha=3$ when $|B'|\in \{2,3\}$. Thus, using a similar calculation to Equation (\ref{eqnnew}) we have: if $|X\cap B_i|=3$, then $X\cup B_i=V$, contradicting the fact that $X$ is a proper critical set, and if $|X\cap B_i|=2$, then $X\cup B_i=V$, contradicting 3-connectivity. So we have that $|X\cap B_i|=1$. If $d(X,B_i)>0$ or the multiplicity of any of the edges in $G[B_i]$ is greater than one, then $X\cup B_i=V$, contradicting 3-connectivity. So $d(X,B_i)=0$ and $G[B_i]$ is simple, for $1\leq i\leq t$.

We define an auxiliary graph $G^\dagger$ as follows. The vertex set is $X,B_1,\ldots,B_n$ and there is an edge between two vertices if and only if the corresponding sets in $G$ intersect.

\begin{claim}
The graph $G^\dagger$ is a tree.
\end{claim}

\begin{proof}
First note that a path in $G^\dagger$ either corresponds to a critical or a semi-critical set in $G$.

Suppose that $G^\dagger$ contains a cycle. Let $C=(c_0,c_1,\ldots, c_\ell)$ be such a cycle. Without loss of generality $c_0$ corresponds to one of the $B_i$, so let $P$ be the path obtained by deleting $c_0$ from $C$. 

Suppose $P$ corresponds to a critical set $P'=\cup_{i=1}^\ell c_i$ in $G$, that is $i(P')=2|P'|-1$. Now let $C'=\cup_{i=0}^\ell c_i$, note that $C'$ contains, at most, an additional two vertices from $P'$ and $G[C']$ contains an additional six edges from $G[P']$, a contradiction.

Suppose $P$ corresponds to a semi-critical set $P'=\cup_{i=1}^\ell c_i$ in $G$, that is $i(P')=2|P'|-2$. Then $X$ is not a vertex of $P$. Now let $C'=\cup_{i=0}^\ell c_i$, note that, as $P'$ is not critical, $G[C']$ contains an additional two vertices and an additional six edges from $G[P']$. Hence $C'=V$, but does not contain $X$, a contradiction.

The claim follows from the connectivity of $\A$.
\end{proof}

As $B_1$ exists it follows from the claim that $\A$ contains a cut-vertex, $v$. As $\A$ is connected the vertex set of $\A$ either corresponds to a critical set in $G$ or it is $V$. In the first case $v$ is part of a cut-pair in $G$ (as $|Y|\leq 1$) and in the second case $v$ is a cut-vertex in $G$. In either case we have contradicted the 3-connectivity of $G$.
\end{proof}

\subsubsection{No proper critical sets}

Now we deal with the case when there are no proper critical sets.

\begin{lem}\label{lem:noproper}
Let $G=(V,E)\in\mathcal{M}$ be essentially 5-edge-connected. Suppose $V$ contains no proper critical sets, let $W_1,\dots, W_k$, with $k\geq 1$, be the vertex sets of all the copies of $K_4$ in $G$ and let $Y=V-\cup_{i=1}^k W_i$. Suppose that either
\begin{enumerate}
\item $|Y|\geq 2$; or
\item $\cup_{i=1}^kG[W_i]$ is disconnected.
\end{enumerate}
Then $G$ contains an admissible node.
\end{lem}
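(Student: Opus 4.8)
The plan is to produce a node of $G$ that lies in no copy of $K_4$; since $W_1,\dots,W_k$ are \emph{all} such copies, this is the same as producing a node in $Y$. Suppose $v$ is such a vertex, with $N(v)=\{a,b,c\}$. Not all of $ab,bc,ca$ can be present, or $\{v,a,b,c\}$ would span a $K_4$; say $ab\notin E$. By Lemma \ref{lem:admissibleedge}, the $1$-reduction that deletes $v$ and adds $ab$ is admissible unless $ab\in E$ (false) or there is a critical set $X$ with $a,b\in X$ and $v,c\notin X$ --- but such an $X$ omits two distinct vertices, so $|X|\le|V|-2$ and $X$ is proper, contrary to hypothesis. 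Hence $v$ is admissible (and when $G$ is simple the reduced graph is again simple, so it stays in $M(2,1)$). Thus everything reduces to finding a node in $Y$.

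First I would record two easy facts. (a) $\delta(G)=3$: otherwise Lemma \ref{lem:4reg} makes $G$ connected and $4$-regular, so the four edges leaving a copy of $K_4$ form a $4$-edge-cutset, which is non-trivial unless $G=K_5$, contradicting essential $5$-edge-connectedness; and $G=K_5$ is impossible here, since then $\A:=\bigcup_{i=1}^k G[W_i]$ equals $K_5$, which is connected with $Y=\emptyset$, so neither (1) nor (2) holds. (b) Since $G$ is a $(2,1)$-circuit, $\sum_{v\in V}(4-d_G(v))=0$, and every summand is at most $1$, with equality exactly at the nodes. So it suffices to show $\sum_{v\in Y}(4-d_G(v))\ge1$.

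Let $A_1,\dots,A_m$ be the vertex sets of the components of $\A$. The heart of the argument is that each $A_j$ is semi-critical with $4\le|A_j|\le|V|-2$. The lower bound is clear (each component contains some $W_i$); for the upper bound, $|A_j|\ge|V|-1$ would force $\A$ connected --- its components having at least four vertices --- with $|Y|\le1$, so that neither (1) nor (2) holds. Thus each $A_j$ is proper, and I would show $A_j$ is semi-critical by adjoining its $K_4$'s one at a time: a $K_4$ is semi-critical, and if $S$ is semi-critical and $W$ is a $K_4$ meeting it with $S\cup W$ proper (as it is, sitting inside $A_j$), then the computation around Equation (\ref{eqnnew}) in the proof of Lemma \ref{lem:criticalsemi} shows $S\cup W$ is semi-critical or critical, the latter excluded since there are no proper critical sets; the same computation forces $|S\cap W|=1$, which is exactly what lets the subset condition in the definition of ``semi-critical'' pass to $S\cup W$.

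Granting this, $i(A_j)=2|A_j|-2$, so
\[ \sum_{v\in A_j}\bigl(4-d_G(v)\bigr)=4|A_j|-2\,i(A_j)-d(A_j,V-A_j)=4-d(A_j,V-A_j). \]
I claim $d(A_j,V-A_j)\ge5$. If some vertex $w\in V-A_j$ is isolated by the cut $(A_j,V-A_j)$ --- i.e.\ all of $w$'s edges enter $A_j$ --- then $d_G(w)=3$ (if $d_G(w)\ge4$ then $i(G[A_j\cup\{w\}])=2|A_j|-2+d_G(w)\ge2(|A_j|+1)$, forcing $A_j\cup\{w\}=V$ since only the whole graph has $f\le0$, contradicting $|A_j|\le|V|-2$), so $w$ is a node in $Y$ and we are finished outright. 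Otherwise the cut is non-trivial --- no vertex of $A_j$ is isolated by it, each lying in a $K_4$ inside $A_j$ --- so $d(A_j,V-A_j)\ge5$ by essential $5$-edge-connectedness, whence $\sum_{v\in A_j}(4-d_G(v))\le-1$. Summing over the $m\ge1$ components and using $\sum_{v\in V}(4-d_G(v))=0$ yields $\sum_{v\in Y}(4-d_G(v))\ge m\ge1$, which produces the required node in $Y$. The step I expect to be the main obstacle is the semi-criticality of the $A_j$: keeping every intermediate union proper and genuinely ruling out the appearance of a \emph{proper} critical set is precisely where one needs essential $5$-edge-connectedness rather than the weaker $4$-edge-connectedness that sufficed when a proper critical set was present (Lemma \ref{lem:properlem}).
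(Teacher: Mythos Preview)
Your proof is correct and follows essentially the same approach as the paper's: establish that each component $A_j$ of $\A$ is semi-critical, use essential $5$-edge-connectedness to get at least five edges leaving each $A_j$, and compare the resulting inequality with $\sum_{v\in V}(4-d_G(v))=0$ to force a node in $Y$, which is then admissible by Lemma~\ref{lem:admissibleedge} and the absence of proper critical sets. You in fact fill in two points the paper leaves implicit: the inductive argument that each $A_j$ is semi-critical, and the verification that the cut $(A_j,V-A_j)$ is non-trivial (your case analysis on a hypothetical isolated $w\in V-A_j$ is a genuine improvement over the paper's bare assertion). One small remark: the subset condition for semi-criticality of $S\cup W$ follows directly from the hypothesis of no proper critical sets together with $|S\cup W|\le|A_j|\le|V|-2$, so the fact that $|S\cap W|=1$ is not really what drives it; and your closing comment misattributes where essential $5$-edge-connectedness enters --- it is used for the edge-cut bound, not for semi-criticality.
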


\begin{proof}
If any of the $W_i$ are critical sets (i.e., $G[W_i]$ contains multi-edges or loops), then, as $V$ contains no proper critical sets, $|V|=5$ and neither (1) nor (2) hold.
So, since $G$ is a $(2,1)$-circuit, Equation (\ref{eqn1}) holds and $W_i$ is semi-critical, for each $1\leq i \leq k$. Let $A_1,\dots,A_r$ be the vertex sets of the connected components of $\cup_{i=1}^k G[W_i]$. 
Observe that each of $A_1,\dots,A_m$ is semi-critical.

Then
\[ 4=\sum_{v\in A_j}(4-d_{G[A_j]}(v)). \]
There are no non-trivial 4-edge-cutsets so, as at least one of (1) or (2) holds, there exists five edges $x_\ell y_\ell$, $1\leq \ell \leq 5$, where $x_\ell \in A_j$ and $y_\ell \in V-A_j$ (where the $x_\ell$ are not necessarily distinct and nor are the $y_\ell$). Then 
\begin{eqnarray} \sum_{j=1}^r \sum_{v\in A_j} (4-d_G(v))\leq -1. \label{eqn3} \end{eqnarray}
Comparing Equations (\ref{eqn1}) and (\ref{eqn3}) gives that $|Y|$ contains a node $v$. Now $v$ is not in a copy of $K_4$ and since there are no proper critical sets, we can apply Lemma \ref{lem:admissibleedge} to deduce that $v$ is admissible.
\end{proof}

\subsection{Admissibility}
\label{subsec:adm}

\begin{lem}
\label{lem:silly}
Let $G=(V,E)$ be a 3-connected $(2,1)$-circuit. Suppose $v\in V_3^*$ is a non-admissible node with $N(v)=\{x,y,z\}$. Then there exists two $v$-critical sets $X,Y$ such that $X\cup Y=V-v$. Moreover, we may choose $X,Y$ such that $z\in X\cap Y$.
\end{lem}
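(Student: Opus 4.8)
The plan is to exploit non-admissibility of $v$ together with Lemma~\ref{lem:admissibleedge} to produce three critical sets (one for each pair of neighbours), and then combine them using Lemma~\ref{lem:union}. Write $N(v)=\{x,y,z\}$. Since $v\in V_3^*$, no two of $x,y,z$ can be such that adding the corresponding edge creates a multigraph in a way that... actually the cleaner route: since $v$ is non-admissible, for \emph{each} of the three pairs the 1-reduction adding that edge fails. By Lemma~\ref{lem:admissibleedge}, for the pair $\{x,y\}$ either $xy\in E$ or there is a critical set containing $x,y$ but not $v,z$; and similarly for $\{x,z\}$ and $\{y,z\}$. I would first rule out that all three pairs are edges: if $xy,xz,yz\in E$ then $\{v,x,y,z\}$ induces a $K_4$, contradicting $v\in V_3^*$. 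So at least one pair, say $\{x,y\}$, yields a genuine critical set $X$ with $x,y\in X$ and $v,z\notin X$.

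Next I would argue that the pair $\{x,z\}$ and the pair $\{y,z\}$ likewise give critical sets. Here I need to handle the possibility that, say, $xz\in E$. The point is that by symmetry of the three pairs, if two of them are edges then (together with the $K_4$-freeness just used) we can still extract enough critical sets; the key observation is that we only need \emph{two} $v$-critical sets whose union is $V-v$. Concretely: consider the pair $\{x,z\}$. If there is a critical set $X'$ with $x,z\in X'$ and $v,y\notin X'$ we take it; if instead $xz\in E$, then since $v$ is non-admissible and $v$ is not in a $K_4$, the pair $\{y,z\}$ must fail via a critical set $Y$ with $y,z\in Y$, $v,x\notin Y$ (it cannot fail via $yz\in E$ as well, else $xz,yz\in E$ and combined with the edges $vx,vy,vz$ and whatever we know about $xy$... — this needs care, but $K_4$-freeness plus 3-connectivity should force a critical set somewhere). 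In all cases we end up with two $v$-critical sets; after relabelling call them $X$ (containing $x$) and $Y$ (containing $y$), each being critical, each containing $z$ by construction or by the following step.

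The core step is then: given two $v$-critical sets, show $X\cup Y=V-v$ and that we may take $z\in X\cap Y$. Since $X,Y$ are critical and $v\notin X\cup Y$, if $|X\cup Y|\le |V|-1$ were to allow it, Lemma~\ref{lem:union} would apply to $X\cup Y$; but I want to push $X\cup Y$ to be all of $V-v$. Suppose not, i.e.\ $|X\cup Y|\le |V|-2$, so $X\cup Y\subseteq V-\{v,w\}$ for some $w$. If $X\cap Y\ne\emptyset$, Lemma~\ref{lem:union} gives that $X\cup Y$ is critical and $d(X,Y)=0$; then $X\cup Y\cup\{v\}$ would be a set containing all three neighbours of $v$ plus $v$, and a count ($i(X\cup Y\cup\{v\}) = (2|X\cup Y|-1)+3 = 2|X\cup Y\cup\{v\}|$) contradicts $G$ being a $(2,1)$-circuit (proper subgraph with $f\le 0$) — unless $X\cup Y\cup\{v\}=V$, i.e.\ $w$ does not exist. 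If $X\cap Y=\emptyset$, then since both contain a neighbour of $v$ and the third neighbour $z$ lies in at most... — this is where I force $z\in X\cap Y$: redo the extraction so that whichever neighbour plays the role of "$z$" in Lemma~\ref{lem:admissibleedge} is the \emph{same} vertex for both sets. That is, pick the labelling of $\{x,y,z\}$ at the start so that the two surviving critical sets are the ones excluding $z$; then automatically $z\notin X$, $z\notin Y$ — wait, that's backwards. Let me instead take $X$ to be $v$-critical for the pair $\{x,z\}$ and $Y$ for $\{y,z\}$, so $z\in X\cap Y$ by definition of these being the chosen pairs; then $X\cap Y\ni z\ne\emptyset$, Lemma~\ref{lem:union} applies, $X\cup Y$ is critical, and the count above forces $X\cup Y=V-v$.

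The main obstacle I anticipate is the case analysis at the start: guaranteeing that (at least) \emph{two} of the three neighbour-pairs fail via an actual critical set rather than via an already-present edge, while also arranging that those two failures "share" the vertex $z$. The $K_4$-freeness of $v$ kills the all-three-edges case, and I expect 3-connectivity (used to exclude $X\cup Y$ being a separating configuration via Lemma~\ref{lem:union}'s hypotheses, and to rule out small obstructions) to clean up the remaining sub-cases; the count $i(X\cup Y\cup\{v\})=2|X\cup Y\cup\{v\}|$ versus the circuit condition is the engine that collapses $X\cup Y$ to exactly $V-v$.
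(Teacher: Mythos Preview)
Your overall strategy matches the paper's: obtain two $v$-critical sets both containing $z$, apply Lemma~\ref{lem:union} (since $z\in X\cap Y$ and $v\notin X\cup Y$), and then use the count $i\bigl((X\cup Y)\cup\{v\}\bigr)=\bigl(2|X\cup Y|-1\bigr)+3=2\bigl|(X\cup Y)\cup\{v\}\bigr|$ to force $X\cup Y=V-v$. That part is fine and is exactly the paper's engine.

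The gap is in your opening case analysis. You spend most of the proposal wrestling with the possibility that one or two of $xy,xz,yz$ lie in $E$, and at the crucial points you retreat to ``this needs care'' without resolving it. You only explicitly rule out the all-three-edges case (via $K_4$-freeness). But in fact the one-edge and two-edge cases are already handled by lemmas you have available: if exactly one edge is present among the neighbours then Lemma~\ref{lem:1edge} makes $v$ admissible, and if exactly two are present then Lemma~\ref{lem:2edges} makes $v$ admissible (note $G$ is $3$-connected). Together with your $K_4$-freeness observation for the three-edge case, this shows that a non-admissible $v\in V_3^*$ has \emph{no} edges among $x,y,z$. This is exactly what the paper does in its first sentence, and it collapses your entire case analysis: Lemma~\ref{lem:admissibleedge} then directly yields $v$-critical sets for each of the three pairs, and you simply take $X$ for the pair $\{x,z\}$ and $Y$ for the pair $\{y,z\}$, giving $z\in X\cap Y$ for free.

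So the fix is a one-line invocation of Lemmas~\ref{lem:1edge} and~\ref{lem:2edges} at the start; after that your argument is correct and coincides with the paper's.
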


\begin{proof}
By Lemmas \ref{lem:2edges} and \ref{lem:1edge}, since $v$ is not admissible and $v\in V_3^*$, there are no edges between $x$, $y$ and $z$. Now \ref{lem:admissibleedge} implies there exists critical sets $X$ containing $x,z$ but not $y,v$ and $Y$ containing $y,z$ but not $x,v$. 
It follows from Lemma \ref{lem:union} and the definition of a $(2,1)$-circuit that $X\cup Y$ is critical, hence $X\cup Y=V-v$.
\end{proof}

The following has essentially the same proof as the corresponding results for $(2,3)$ and $(2,2)$-circuits, \cite[Lemma $3.3$]{B&J} and \cite[Lemma $2.10$]{Nix}; however we obtain a slightly stronger result.

\begin{lem}\label{lem:leaf}
Let $G=(V,E)$ be a 
$(2,1)$-circuit with $\delta(G)=3$. Let $v \in V$ be a node with $N(v)=\{x,y,z\}$, $d(z)\geq 4$ and suppose there are no edges between neighbours of $v$. Let $X$ be $v$-critical on $x,y$. Suppose either
\begin{enumerate}
\item there is a non-admissible node $u\in V-X-v$ where $d_{G[V_3^*]}(u)=2$ with no edges between its neighbours, precisely one neighbour $w$ in $X$ and $w$ is a node; or
\item there is a non-admissible node $u \in V-X-v$ where $d_{G[V_3^*]}(u)=0\text{ or }1$ with no edges between its neighbours. 
\end{enumerate}
Then, in $G$, there is a node-critical set $X'$ for a node with no edges between its neighbours such that $X\subsetneq X'$.
\end{lem}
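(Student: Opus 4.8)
The plan is to take the $v$-critical set $X$ (on $x,y$) together with a suitable critical set witnessing the non-admissibility of $u$, and to form their union. Since $u\notin X$ and $v\notin X$, and $u$ has no edges between its neighbours, Lemma~\ref{lem:admissibleedge} applies to $u$: for each pair of neighbours of $u$ there is a critical set containing that pair but omitting $u$ and the third neighbour. The first step is to choose, among these, a critical set $Z$ that is $u$-critical ``in the right direction'', i.e.\ one that picks up at least one neighbour of $u$ lying in $X$ (in case (1) the neighbour $w\in X$, which is a node, forces us to select the pair of neighbours of $u$ containing $w$; in case (2) we have more freedom since $u$ has few or no neighbours among the nodes and we can argue that some $u$-critical $Z$ meets $X$). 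The point of insisting $Z\cap X\neq\emptyset$ is that Lemma~\ref{lem:union} then gives that $X\cup Z$ is critical (provided $|X\cup Z|\le|V|-1$, which I will need to check) and that $d(X,Z)=0$.

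Next I would verify that $X':=X\cup Z$ is the desired node-critical set. It strictly contains $X$ because $u\in Z\setminus X$. It is proper: if $X\cup Z=V$ or $V-v$ we would get a contradiction, since $v\notin X\cup Z$ forces $X\cup Z\subseteq V-v$, and equality $X\cup Z=V-v$ would have to be ruled out by a counting/connectivity argument (here the degree hypothesis $d(z)\ge 4$ and $z\in X$, together with the fact that $Z$ omits a neighbour of $u$, should prevent $X\cup Z$ from exhausting $V-v$; alternatively one invokes 3-connectivity as in Lemma~\ref{lem:silly}). For node-criticality I need $X'$ to be $u'$-critical for some node $u'$ with two of its neighbours inside $X'$ and the third of degree $\ge 4$ and also with no edges between its neighbours. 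The natural candidate is $u'=u$: since $d(X,Z)=0$, the neighbours of $u$ that lie in $Z$ stay put, and I must count how many of the three neighbours of $u$ end up in $X'$. In case (2), $u$ has at most one neighbour that is a node, so its other neighbours have degree $\ge 4$; combined with $Z$ being $u$-critical on a chosen pair and the fact that $Z\cap X\neq\emptyset$ may add the third neighbour or not, one checks that exactly two neighbours of $u$ lie in $X'$ and the remaining neighbour $z_u$ has $d(z_u)\ge 4$, and $z_u$ can be taken to lie in $X'$ as in the ``moreover'' clause of Lemma~\ref{lem:silly} (applied to $u$ in place of $v$, using that $u$ is non-admissible and $u\in V_3^*$). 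In case (1) the bookkeeping is slightly different because $w\in X$ is itself a node: here I would use $d_{G[V_3^*]}(u)=2$ together with $u$'s membership in the forest $G[V_3]$ (Lemma~\ref{lem:V3}) to pin down which neighbours are nodes and which have degree $\ge4$.

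The main obstacle I anticipate is precisely this last bookkeeping: making sure that after taking the union the resulting critical set $X'$ is genuinely \emph{node}-critical, i.e.\ that the node witnessing it has exactly two neighbours in $X'$, the third neighbour of degree at least four, and no edges among its neighbours — all three conditions must be transported through the union, and the degree-$\ge 4$ condition on the ``outside'' neighbour is the delicate one, since a priori that neighbour could be absorbed into $X'$. I expect to handle this by being careful in the choice of $Z$ (choosing the $u$-critical set that omits the correct third neighbour), by invoking Lemma~\ref{lem:silly} applied to $u$ to get a $u$-critical $Z$ with the third neighbour inside it when needed, and by using the no-edges-between-neighbours hypotheses on both $v$ and $u$ together with Lemma~\ref{lem:V3} to control adjacencies. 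The remaining verifications — that $X\cup Z$ is proper, and that $d(X,Z)=0$ — are immediate from Lemmas~\ref{lem:union} and the $(2,1)$-circuit axioms once $Z$ is chosen, so the real content is the case analysis in selecting $Z$.
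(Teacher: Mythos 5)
Your overall strategy -- enlarge $X$ by uniting it with a critical set witnessing the non-admissibility of $u$, using Lemma~\ref{lem:admissibleedge} to produce such sets and Lemma~\ref{lem:union} to keep the union critical -- is the paper's strategy, and your treatment of case (1) (take the $u$-critical set on the pair containing $w$) is exactly what the paper does. In case (2) the paper likewise invokes the argument of Lemma~\ref{lem:silly} to get two $u$-critical sets $Y_1,Y_2$ with $Y_1\cup Y_2=V-u$, so that one of them meets $X$; it then splits into subcases according to $|N(u)\cap X|\in\{0,1,2,3\}$.

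There is, however, a genuine gap at precisely the point you flag as "the main obstacle": you commit to making $X'$ node-critical \emph{for $u$}, i.e.\ you always take $X'=X\cup Z$ with $Z$ a $u$-critical set, and hope that a careful choice of $Z$ keeps exactly two neighbours of $u$ inside the union. This cannot be made to work when $|N(u)\cap X|=2$. Say $N(u)=\{s_1,s_2,s_3\}$ with $s_1,s_2\in X$. A $u$-critical set on a pair containing $s_3$ forces all three neighbours of $u$ into $X\cup Z$, so the union is not $u$-critical; while a $u$-critical set on $\{s_1,s_2\}$ may be entirely contained in $X$, in which case $X\cup Z=X$ and you lose the strict inclusion $X\subsetneq X'$ that the lemma requires. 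The paper's resolution is to abandon $u$ as the witnessing node in this subcase and take $X'=X\cup\{u\}$ instead: since $s_1,s_2\in X$ this adds one vertex and at least two edges, so $X'$ is critical, and it remains $v$-critical on $x,y$ with $d(z)\geq 4$ (note $u\neq z$ because $d(u)=3<4\leq d(z)$), hence node-critical \emph{for the original node $v$}; strictness is automatic as $u\notin X$. The remaining subcase $|N(u)\cap X|=3$ is killed by a contradiction ($X\cup\{u\}$ would be all of $V$ yet omit $v$). So the missing idea is not a cleverer choice of $Z$ but a change of witness: the conclusion only asks for \emph{some} node-critical set properly containing $X$, and in the troublesome subcase that node is $v$, not $u$. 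With that subcase added (and the $|N(u)\cap X|=0,1$ subcases handled by choosing whichever of $Y_1,Y_2$ contains the neighbour of $u$ lying in $X$, as you sketch), your argument aligns with the paper's.
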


\begin{proof}
Suppose (1) holds and let $N(u)=\{w,p,q\}$. 
Since $u$ is non-admissible and $wp\not\in E$, by Lemma \ref{lem:admissibleedge}, there exists a $u$-critical set, $Y$ say, on $w$ and $p$. Now, $w\in X\cup Y$, and $u,q\not\in X\cup Y$, so by Lemma \ref{lem:union}, $X':=X\cup Y$ is node-critical for $u$. As $p\not\in X$, $X\subsetneq X'$.

So, suppose that (2) holds. Let $r$ be a neighbour of $u$ of minimum degree in $G$, using the argument in the proof of Lemma \ref{lem:silly}, we see that there exist $u$-critical sets $Y_1$ and $Y_2$ whose intersection contains $r$ and $Y_1\cup Y_2= V-u$. Hence, without loss of generality, $X\cap Y_1\neq \emptyset$. As $|N(u)\cap X|\leq 3$, we consider four cases. If $|N(u)\cap X|=0$, then the set $X'=X\cup Y_1$ is node-critical and, as $|N(u)\cap X|=0$, we have that $X\subsetneq X'$.

Suppose that $|N(u)\cap X|=1$, say $N(u)\cap X=\{s\}$. If $s\in Y_1$, then set $X'=X\cup Y_1$ is node-critical and, as $N(u)-(X\cup Y_1)\neq \emptyset$, we have that $X\subsetneq X'$. So, assume that $s\not\in Y_1$, that is $s\in Y_2-Y_1$; hence $X'=X\cup Y_2$ is node-critical and, as $N(u)-(X\cup Y_2)\neq \emptyset$, we have that $X\subsetneq X'$. 

If $|N(u)\cap X|=2$, then the set $X'=X\cup \{u\}$ is node-critical and hence $X\subsetneq X'$. Finally, if $|N(u)\cap X|=3$, then $V=X\cup \{u\}$, but $v\not\in X\cup \{u\}$, a contradiction.
\end{proof}

Before we prove Theorem \ref{thm:Hen2admissible} we establish the following lemma to deal with some small circuits. Note that the set of graphs $\mathcal{G}^*$ is comprised of the graphs in the set $\mathcal{G} = \{K_5,\overline{G57},\overline{G59},\break\overline{G293},\overline{G308},\overline{G312}, S_1, S_2,S_3,S_4,S_5\}$  and those shown in Figure \ref{fig:G*stuff}.

\begin{figure}
\begin{center}
\begin{tikzpicture}[scale=1, vertex/.style={circle,inner sep=2,fill=black,draw}, vertex2/.style={circle,inner sep=4,fill=black,draw}]

\coordinate (a1) at (1.75,7.5);

\coordinate (c1) at (3.75,7);
\coordinate (c2) at (4.25,8);
\coordinate (c3) at (4.75,7);

\coordinate (d1) at (6.25,7);
\coordinate (d2) at (6.75,8);
\coordinate (d3) at (7.25,7);

\coordinate (e1) at (8.75,7);
\coordinate (e2) at (9.25,8);
\coordinate (e3) at (9.75,7);


\coordinate (u1) at (0,4);
\coordinate (u2) at (1,4);
\coordinate (u3) at (0,5);
\coordinate (u4) at (1,5);

\coordinate (v1) at (2.5,4);
\coordinate (v2) at (3.5,4);
\coordinate (v3) at (2.5,5);
\coordinate (v4) at (3.5,5);

\coordinate (w1) at (5,4);
\coordinate (w2) at (6,4);
\coordinate (w3) at (5,5);
\coordinate (w4) at (6,5);

\coordinate (x1) at (7.5,4);
\coordinate (x2) at (8.5,4);
\coordinate (x3) at (7.5,5);
\coordinate (x4) at (8.5,5);

\coordinate (y1) at (10,4);
\coordinate (y2) at (11,4);
\coordinate (y3) at (10,5);
\coordinate (y4) at (11,5);


\coordinate (m1) at (0.5,0.5);
\coordinate (m2) at (0,1.5);
\coordinate (m3) at (1,2.25);
\coordinate (m4) at (2,1.5);
\coordinate (m5) at (1.5,0.5);

\coordinate (n1) at (3.5,0.5);
\coordinate (n2) at (3,1.5);
\coordinate (n3) at (4,2.25);
\coordinate (n4) at (5,1.5);
\coordinate (n5) at (4.5,0.5);

\coordinate (o1) at (6.5,0.5);
\coordinate (o2) at (6,1.5);
\coordinate (o3) at (7,2.25);
\coordinate (o4) at (8,1.5);
\coordinate (o5) at (7.5,0.5);

\coordinate (p1) at (9.5,0.5);
\coordinate (p2) at (9,1.5);
\coordinate (p3) at (10,2.25);
\coordinate (p4) at (11,1.5);
\coordinate (p5) at (10.5,0.5);


\draw (1.75,7.65) circle (0.15cm);
\draw (1.75,7.35) circle (0.15cm);

\draw[bend left=15] (c1) edge (c2);
\draw[bend left=15] (c2) edge (c1);
\draw[bend left=15] (c3) edge (c2);
\draw[bend left=15] (c2) edge (c3);
\draw[bend left=15] (c1) edge (c3);
\draw[bend left=15] (c3) edge (c1);

\draw (6.75,8.15) circle (0.15cm);
\draw (d1) -- (d2) -- (d3) -- cycle;
\draw[bend left=20] (d1) edge (d3);
\draw[bend left=20] (d3) edge (d1);

\draw (e1) -- (e2) -- (e3) -- cycle;
\draw (9.25,8.15) circle (0.15cm);
\draw (8.64393398283,6.89393398282) circle (0.15cm); 
\draw (9.85606601717,6.89393398282) circle (0.15cm); 


\draw (u1) -- (u2) -- (u3) -- (u4) -- (u1) -- (u3);
\draw (u4) -- (u2);
\draw (-0.10606601717,5.10606601717) circle (0.15cm); 
\draw (1.10606601717,5.10606601717) circle (0.15cm); 

\draw (v1) -- (v2) -- (v3) -- (v4) -- (v1);
\draw (v4) -- (v2);
\draw[bend left=20] (v1) edge (v3);
\draw[bend left=20] (v3) edge (v1);
\draw (3.60606601717,5.10606601717) circle (0.15cm); 

\draw (w1) -- (w2) -- (w3) -- (w4) -- (w1);
\draw (w4) -- (w2);
\draw[bend left=20] (w1) edge (w3);
\draw[bend left=20] (w3) edge (w1);
\draw (w3) edge (w1);

\draw (x1) -- (x2);
\draw (x4) -- (x1);
\draw (x3) -- (x2);
\draw (x3) -- (x4);
\draw[bend left=20] (x1) edge (x3);
\draw[bend left=20] (x3) edge (x1);
\draw[bend left=20] (x2) edge (x4);
\draw[bend left=20] (x4) edge (x2);

\draw (y1) -- (y2);
\draw (y4) -- (y1);
\draw (y3) -- (y2);
\draw (y2) -- (y4);
\draw[bend left=20] (y1) edge (y3);
\draw[bend left=20] (y3) edge (y1);
\draw[bend left=20] (y3) edge (y4);
\draw[bend left=20] (y4) edge (y3);


\draw (m5) -- (m4) -- (m3) -- (m2) -- (m1) -- (m4) -- (m2) -- (m5) -- (m3) -- (m1);
\draw (1,2.4) circle (0.15cm);

\draw (n5) -- (n4) -- (n3) -- (n2) -- (n1) -- (n4) -- (n2) -- (n5) -- (n3) -- (n1);
\draw (3.39393398282,0.39393398282) circle (0.15cm); 

\draw (o5) -- (o4);
\draw (o3) -- (o2) -- (o1) -- (o4) -- (o2) -- (o5) -- (o3) -- (o1);
\draw[bend left=20] (o4) edge (o3);
\draw[bend left=20] (o3) edge (o4);

\draw (p4) -- (p3) -- (p2) -- (p1) -- (p4) -- (p2) -- (p5) -- (p3) -- (p1);
\draw[bend left=20] (p4) edge (p5);
\draw[bend left=20] (p5) edge (p4);


\node at (a1) [vertex]{};

\node at (c1) [vertex]{};
\node at (c2) [vertex]{};
\node at (c3) [vertex]{};

\node at (d1) [vertex]{};
\node at (d2) [vertex]{};
\node at (d3) [vertex]{};

\node at (e1) [vertex]{};
\node at (e2) [vertex]{};
\node at (e3) [vertex]{};


\node at (u1) [vertex]{};
\node at (u2) [vertex]{};
\node at (u3) [vertex]{};
\node at (u4) [vertex]{};

\node at (v1) [vertex]{};
\node at (v2) [vertex]{};
\node at (v3) [vertex]{};
\node at (v4) [vertex]{};

\node at (w1) [vertex]{};
\node at (w2) [vertex]{};
\node at (w3) [vertex]{};
\node at (w4) [vertex]{};

\node at (x1) [vertex]{};
\node at (x2) [vertex]{};
\node at (x3) [vertex]{};
\node at (x4) [vertex]{};

\node at (y1) [vertex]{};
\node at (y2) [vertex]{};
\node at (y3) [vertex]{};
\node at (y4) [vertex]{};


\node at (m1) [vertex]{};
\node at (m2) [vertex]{};
\node at (m3) [vertex]{};
\node at (m4) [vertex]{};
\node at (m5) [vertex]{};

\node at (n1) [vertex]{};
\node at (n2) [vertex]{};
\node at (n3) [vertex]{};
\node at (n4) [vertex]{};
\node at (n5) [vertex]{};

\node at (o1) [vertex]{};
\node at (o2) [vertex]{};
\node at (o3) [vertex]{};
\node at (o4) [vertex]{};
\node at (o5) [vertex]{};

\node at (p1) [vertex]{};
\node at (p2) [vertex]{};
\node at (p3) [vertex]{};
\node at (p4) [vertex]{};
\node at (p5) [vertex]{};


\node at (1.75,6.5) {$R_0$};

\node at (4.25,6.5) {$R_1$};
\node at (6.75,6.5) {$R_2$};
\node at (9.25,6.5) {$R_3$};


\node at (0.5,3.5) {$R_4$};
\node at (3,3.5) {$R_5$};
\node at (5.5,3.5) {$R_{6}$};
\node at (8,3.5) {$R_{7}$};
\node at (10.5,3.5) {$R_{8}$};


\node at (1,0) {$R_{9}$};
\node at (4,0) {$R_{10}$};
\node at (7,0) {$R_{11}$};
\node at (10,0) {$R_{12}$};

\end{tikzpicture}
\end{center}
\caption{Circuits in $\mathcal{G}^*\cap(\mathcal{M}\setminus M(2,1))$.}
\label{fig:G*stuff}
\end{figure}
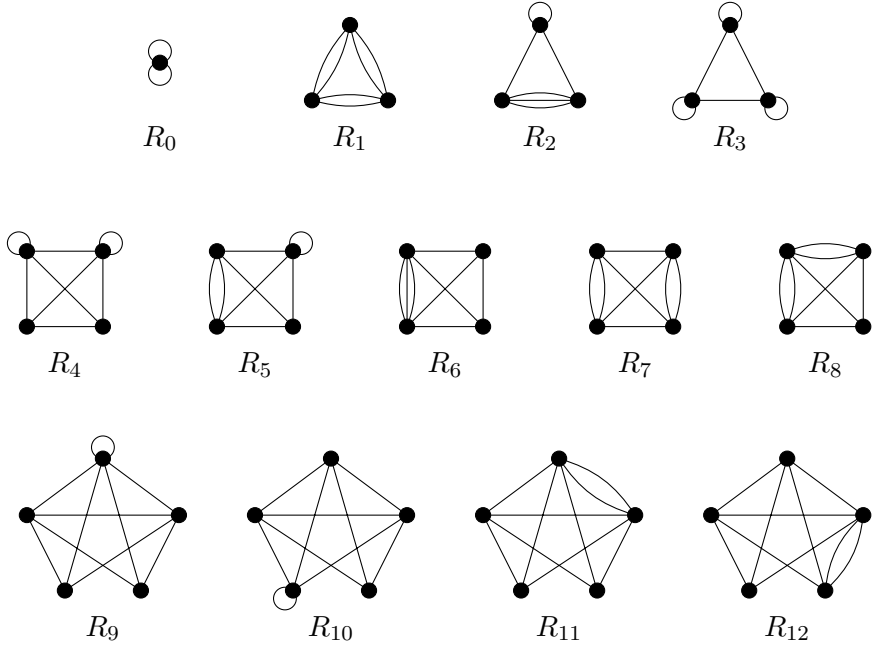

\begin{lem}
\label{lem:small_ones}
Let $G=(V,E)\in\mathcal{M}\setminus\mathcal{G}^*$ be 3-connected and essentially 5-edge-connected  with $|V|\leq 8$, $\delta(G)=3$ and no proper critical sets. 

Suppose that $G$ contains a copy of $K_4$. Let $W_1,\ldots,W_k$, with $k\geq 1$, be the vertex sets of all copies of $K_4$ in $G$ and let $Y=V-\cup_{i=1}^k W_i$. Further suppose that $\mathcal{A}=\cup_{i=1}^k G[W_i]$ is connected and $|Y|\leq 1$. Then $G$ contains an admissible node.
\end{lem}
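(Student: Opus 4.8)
The plan is to exploit the severe size restriction $|V|\le 8$ together with the combinatorial constraints imposed by the hypotheses: $G$ is $3$-connected, essentially $5$-edge-connected, has $\delta(G)=3$, contains no proper critical set, and the union $\mathcal{A}=\cup_i G[W_i]$ of all its $K_4$'s is connected with $|Y|\le 1$. Since $G\in\mathcal{M}$ we have $\sum_{v\in V}(4-d_G(v))=0$, so the number of nodes equals the number of vertices of degree $\ge 5$ counted with the obvious weights; in particular at least one node exists. The first step is to enumerate, up to isomorphism, the possible configurations of the $K_4$'s. Each $W_i$ is a semi-critical set (it cannot be critical, else $|V|=5$ by the no-proper-critical-set hypothesis contradicting the existence of a $K_4$-union that is connected and large). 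By the intersection computation in the style of Equation~(\ref{eqnnew}), two intersecting $K_4$'s must meet in exactly one vertex (meeting in two or three vertices forces their union to be all of $V$, contradicting $3$-connectivity or properness). So $\mathcal{A}$ is a ``tree-like'' gluing of $K_4$'s along single cut-vertices, and since $|V-\cup W_i|=|Y|\le 1$ this pins down $|V|\in\{4,5,7,8\}$ with very few shapes: a single $K_4$ ($|V|=4$, too small), two $K_4$'s sharing a vertex ($|V|=7$), two $K_4$'s sharing a vertex plus one extra vertex ($|V|=8$), or three $K_4$'s in a path or star ($|V|=10$, excluded), etc. The genuinely relevant cases are $|V|=7$ and $|V|=8$.

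The second step is to analyse each surviving configuration. Write $\mathcal{A}$ with cut-vertex(es); by $3$-connectivity each cut-vertex of $\mathcal{A}$ must be supplemented by enough edges of $G$ not lying in $\mathcal{A}$ to destroy the cut, and essential $5$-edge-connectivity forces at least five edges leaving each $W_i$ (the four ``boundary'' edges plus more). Combined with the degree count $\sum(4-d_G(v))=0$ and $f(G)=0$, i.e. $|E|=2|V|$, this is extremely rigid. For $|V|=7$, $\mathcal{A}$ = two $K_4$'s sharing a vertex has $6+6-0=12$ internal edges on $7$ vertices, but $|E|=14$, leaving only $2$ edges outside $\mathcal{A}$; these two edges cannot give the shared vertex (which has $\mathcal{A}$-degree $6$) enough help and cannot make $G$ essentially $5$-edge-connected across the two $K_4$'s — one checks directly that no such $(2,1)$-circuit is essentially $5$-edge-connected unless it is one of $\overline{G293},\overline{G308},\overline{G312}$ (the seven-vertex base graphs, excluded since $G\notin\mathcal{G}^*$). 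For $|V|=8$: $\mathcal{A}$ has $12$ internal edges and $|E|=16$, so $4$ edges lie outside $\mathcal{A}$; $|Y|=1$ so the extra vertex $y$ has all $d_G(y)\ge3$ of its edges among these $4$, forcing $d_G(y)\in\{3,4\}$ and leaving at most one edge to reconnect across the shared cut-vertex. Checking essential $5$-edge-connectivity and $3$-connectivity eliminates everything except the excluded base graphs $S_1,\dots,S_4$.

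The third step is to conclude. In each configuration that is \emph{not} on the excluded list, one locates an explicit node $v$ outside (or on the boundary of) a $K_4$ with no edges among its neighbours, and verifies via Lemma~\ref{lem:admissibleedge} (using that there are no proper critical sets, so the only obstruction to admissibility would be an edge between two neighbours of $v$) that $v$ is admissible; Lemmas~\ref{lem:2edges} and~\ref{lem:1edge} handle the cases where some but not all pairs of neighbours of $v$ are adjacent. Typically $v$ can be taken to be a node of $G[V_3]$ not contained in any $W_i$, which exists because $G[V_3]$ is a nonempty forest (Lemma~\ref{lem:V3}) and the $K_4$'s absorb only boundedly many degree-$3$ vertices.

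\textbf{Main obstacle.} The hard part will be the bookkeeping in the second step: carefully ruling out all configurations and showing that the \emph{only} $3$-connected, essentially $5$-edge-connected $(2,1)$-circuits on $\le 8$ vertices with a connected $K_4$-union and $|Y|\le 1$ are precisely the listed base graphs in $\mathcal{G}^*$. This is a finite but fiddly case check — one must be scrupulous about edge multiplicities (since $G\in\mathcal{M}$ may carry multi-edges, each $W_i$ incident with a multi-edge would be critical not semi-critical, which the no-proper-critical-set hypothesis kills unless $|V|=5$) and about which vertices of each $K_4$ receive the outgoing edges. Organising this as: (a) determine the isomorphism type of $\mathcal{A}$; (b) determine how many edges lie outside $\mathcal{A}$ and their possible endpoints; (c) impose $3$-connectivity and essential $5$-edge-connectivity; (d) read off that the result is a base graph or has an admissible node — keeps the argument manageable.
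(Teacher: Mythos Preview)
Your overall strategy --- enumerate the possible $K_4$-intersection patterns for each $|V|\le 8$ and check each resulting graph --- is exactly the paper's approach. However, there is a concrete error in your intersection analysis that causes you to miss several cases.

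You claim that two $K_4$'s meeting in two or three vertices must have union equal to~$V$. This is false. If $|W_i\cap W_j|=2$ (resp.\ $3$), the union is a set of size~$6$ (resp.\ $5$) inducing at least $11=2\cdot 6-1$ (resp.\ $9=2\cdot 5-1$) edges, hence \emph{critical} --- but a critical set need not be all of $V$. The no-proper-critical-set hypothesis only forces $|W_i\cup W_j|\ge |V|-1$, so intersection~$2$ is allowed when $|V|\in\{6,7\}$ and intersection~$3$ when $|V|\in\{5,6\}$. You have therefore entirely omitted $|V|=6$ (where the paper finds the base graphs $\overline{G57}$ and $\overline{G59}$) and the intersection-$2$ subcase of $|V|=7$ (where the paper exhibits two explicit graphs, each with an admissible node via Lemma~\ref{lem:1edge} or~\ref{lem:2edges}). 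These are precisely the cases where the lemma has genuine content rather than being vacuous.

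Two further slips. First, for $|V|\in\{4,5\}$ you cannot simply dismiss the cases: since $G\in\mathcal{M}$ may be a multigraph, the paper must (and does) list the multigraph circuits $R_4,\dots,R_8$ on four vertices and $K_5,R_9,\dots,R_{12}$ on five vertices, all in $\mathcal{G}^*$. Second, for $|V|=8$ with $|W_1\cap W_2|=1$ and $|Y|=1$, your conclusion is backwards: the surviving graphs are \emph{not} the base graphs $S_1,\dots,S_4$ but rather three explicit graphs (the paper's Figure~\ref{fig:order8_admiss}), each of which \emph{does} contain an admissible node by Lemma~\ref{lem:1edge} or~\ref{lem:2edges}. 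The base graphs $S_1,\dots,S_4$ do not arise under the hypotheses of this lemma at all.
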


\begin{proof}
First note that, as $G$ contains a copy of $K_4$, $|V|\geq 4$.

Suppose that $|V|=4$. In this case $V=W_1$ and as $G$ is a circuit, $G\in\mathcal{M}'$ (so $G$ is a multigraph). There are five possibilities for a circuit on four vertices: $R_4$, $R_5$, $R_6$, $R_7$, $R_8$ (see Figure \ref{fig:G*stuff}) all of which are contained in $\mathcal{G}^*$.  

Suppose that $|V|=5$. Either there are two vertex sets underlying copies of $K_4$ or there is one set and an additional vertex of degree three. Note that these cases are equivalent, hence there are five possibilities for a such a circuit on five vertices: $K_5$, $R_9$, $R_{10}$, $R_{11}$, $R_{12}$ (see Figure \ref{fig:G*stuff}) all of which are contained in $\mathcal{G}^*$.

Suppose that $|V|=6$. As $|Y|\leq 1$, $k\geq 2$ and any pair of distinct $W_i$, $W_j$ intersect in either two or three vertices. Consider $W_1$ and $W_2$. If $|W_1\cap W_2|=2$, then $V=W_1\cup W_2$, and as $G$ is 3-connected it must be isomorphic to $\overline{G59}\in\mathcal{G}$. So suppose that $|W_1\cap W_2|=3$. Then there exists a vertex $v\not\in W_1\cup W_2$. As $W_1\cup W_2$ is a critical set $v$ has degree three and hence $G$ is either isomorphic to $\overline{G57}$ or $\overline{G59}$, in either case $G\in\mathcal{G}$.

Suppose that $|V|= 7$. As $|Y|\leq 1$ and $\mathcal{A}$ is connected, $k\geq 2$ and any two $W_i$, $W_j$ intersect in either one, two or three vertices. However, as $G$ does not contain a proper critical set $|W_i\cap W_j|\neq 3$. 
If $|W_1\cap W_2|=1$, then as $G$ is 3-connected it must be isomorphic to $\overline{G312}$. 
So suppose that $|W_1\cap W_2|=2$. Then there exists a vertex $v\not\in W_1\cup W_2$. As $W_1\cup W_2$ is a critical set $v$ has degree three and as $G$ is 3-connected it is isomorphic to one of the two graphs illustrated in Figure \ref{fig:order7_admiss}. In either case Lemma \ref{lem:1edge} or \ref{lem:2edges} implies the existence of an admissible node.

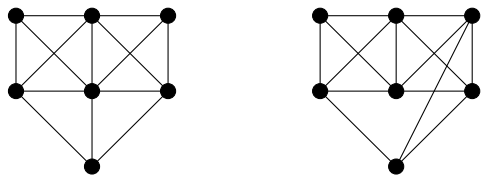
\begin{figure}
\begin{center}
\begin{tikzpicture}[scale=1, vertex/.style={circle,inner sep=2,fill=black,draw}, vertex2/.style={circle,inner sep=4,fill=black,draw}]

\coordinate (v1) at (0,0);
\coordinate (v2) at (1,0);
\coordinate (v3) at (2,0);
\coordinate (v4) at (0,1);
\coordinate (v5) at (1,1);
\coordinate (v6) at (2,1);
\coordinate (v7) at (1,-1);

\draw (v1) -- (v3) -- (v6) -- (v4) -- (v1) -- (v5) -- (v2) -- (v6);
\draw (v4) -- (v2);
\draw (v5) -- (v3);

\draw (v1) -- (v7) -- (v2);
\draw (v7) -- (v3);

\node at (v1) [vertex]{};
\node at (v2) [vertex]{};
\node at (v3) [vertex]{};
\node at (v4) [vertex]{};
\node at (v5) [vertex]{};
\node at (v6) [vertex]{};
\node at (v7) [vertex]{};


\coordinate (u1) at (4,0);
\coordinate (u2) at (5,0);
\coordinate (u3) at (6,0);
\coordinate (u4) at (4,1);
\coordinate (u5) at (5,1);
\coordinate (u6) at (6,1);
\coordinate (u7) at (5,-1);

\draw (u1) -- (u3) -- (u6) -- (u4) -- (u1) -- (u5) -- (u2) -- (u6);
\draw (u4) -- (u2);
\draw (u5) -- (u3);

\draw (u1) -- (u7) -- (u3);
\draw (u7) -- (u6);

\node at (u1) [vertex]{};
\node at (u2) [vertex]{};
\node at (u3) [vertex]{};
\node at (u4) [vertex]{};
\node at (u5) [vertex]{};
\node at (u6) [vertex]{};
\node at (u7) [vertex]{};

\end{tikzpicture}
\end{center}
\caption{Potential base graphs on seven vertices.}
\label{fig:order7_admiss}
\end{figure}

Finally, suppose that $|V|=8$. As $\mathcal{A}$ is connected and $|Y|\leq 1$, $k\geq 2$ and for any pair $W_i$, $W_j$, where $1\leq i<j\leq k$, $|W_i\cap W_j|=0\text{ or }1$ and there exists a pair, say $W_1$, $W_2$, where $|W_1\cap W_2|=1$. Then there exists a vertex $v\not\in W_1\cup W_2$ and as $G$ is 3-connected in this case it must be simple and isomorphic to one of the three graphs illustrated in Figure \ref{fig:order8_admiss}; in each case Lemma \ref{lem:1edge} or \ref{lem:2edges} implies the existence of an admissible node. 
\end{proof}

\begin{figure}
\begin{center}
\begin{tikzpicture}[scale=1, vertex/.style={circle,inner sep=2,fill=black,draw}, vertex2/.style={circle,inner sep=4,fill=black,draw}]

\coordinate (v1) at (0,0);
\coordinate (v2) at (1,0);
\coordinate (v3) at (0,1);
\coordinate (v4) at (1,1);
\coordinate (v5) at (2,0);
\coordinate (v6) at (1,-1);
\coordinate (v7) at (2,-1);
\coordinate (v8) at (2.5,1);

\draw (v1) -- (v3) -- (v2) -- (v4) -- (v1) -- (v6) -- (v7) -- (v5) -- (v6) -- (v2) -- (v5) -- (v8) -- (v2) -- (v1);
\draw (v3) -- (v8);
\draw (v2) -- (v7);

\node at (v1) [vertex]{};
\node at (v2) [vertex]{};
\node at (v3) [vertex]{};
\node at (v4) [vertex]{};
\node at (v5) [vertex]{};
\node at (v6) [vertex]{};
\node at (v7) [vertex]{};
\node at (v8) [vertex]{};


\coordinate (u1) at (4,0);
\coordinate (u2) at (5,0);
\coordinate (u3) at (4,1);
\coordinate (u4) at (5,1);
\coordinate (u5) at (6,0);
\coordinate (u6) at (5,-1);
\coordinate (u7) at (6,-1);
\coordinate (u8) at (6.5,1);

\draw (u1) -- (u3) -- (u2) -- (u4) -- (u1) -- (u6) -- (u7) -- (u5) -- (u6) -- (u2) -- (u5) -- (u8) -- (u7);
\draw (u3) -- (u8);

\draw (u2) -- (u7);
\draw (u2) -- (u1);

\node at (u1) [vertex]{};
\node at (u2) [vertex]{};
\node at (u3) [vertex]{};
\node at (u4) [vertex]{};
\node at (u5) [vertex]{};
\node at (u6) [vertex]{};
\node at (u7) [vertex]{};
\node at (u8) [vertex]{};


\coordinate (w1) at (8,0);
\coordinate (w2) at (9,0);
\coordinate (w3) at (8,1);
\coordinate (w4) at (9,1);
\coordinate (w5) at (10,0);
\coordinate (w6) at (9,-1);
\coordinate (w7) at (10,-1);
\coordinate (w8) at (10.5,1);

\draw (w1) -- (w3) -- (w2) -- (w4) -- (w1);
\draw (w6) -- (w7) -- (w5) -- (w6)-- (w2) -- (w5) -- (w8) -- (w7);
\draw (w3) -- (w4);
\draw (w4) -- (w5);
\draw (w2) -- (w7);
\draw (w2) -- (w1);

\draw[bend left=30] (w3) edge (w8);

\node at (w1) [vertex]{};
\node at (w2) [vertex]{};
\node at (w3) [vertex]{};
\node at (w4) [vertex]{};
\node at (w5) [vertex]{};
\node at (w6) [vertex]{};
\node at (w7) [vertex]{};
\node at (w8) [vertex]{};

\end{tikzpicture}
\end{center}
\caption{Potential base graphs on eight vertices.}
\label{fig:order8_admiss}
\end{figure}
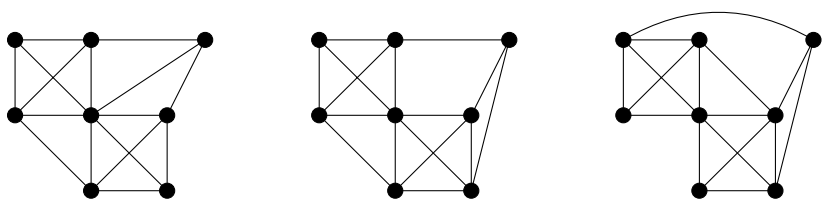

We can now prove Theorem \ref{thm:Hen2admissible}.
\begin{thm1.3}
Let $G=(V,E)\in \mathcal{M}$ be $3$-connected with $\delta(G)=3$ and $G\not\in \G^\ast$. Suppose either
\begin{enumerate}
\item[(i)] $G$ is essentially 5-edge-connected and there is no proper critical set or
\item[(ii)] $G$ is essentially 4-edge-connected and there is a proper critical set.
\end{enumerate}
Then $G$ contains an admissible node.
\end{thm1.3}

\begin{proof}

Suppose that (i) holds.
There exists a vertex $v$ of degree three. If $v$ is not contained in a copy of $K_4$, then, as there are no proper critical sets, we can apply Lemma \ref{lem:admissibleedge} to show that $v$ is admissible. 

So we may assume that $G$ contains a copy of $K_4$. 
Let $W_1,\dots, W_k$, with $k\geq 1$, be the vertex sets of all the copies of $K_4$ in $G$ and let $Y=V-\cup_{i=1}^k W_i$. 
By Lemma \ref{lem:noproper} we may assume that $\mathcal{A}=\cup_{i=1}^k G[W_i]$ is connected and $|Y|\leq 1$. 
If $G$ has less than 9 vertices, then, by Lemma \ref{lem:small_ones}, it is either in $\G^*$ or has an admissible node.

So suppose $G$ has at least 9 vertices. Since there are no proper critical sets, the only options for two copies of $K_4$, with vertex sets $W_i$ and $W_j$ say, are that $W_i\cap W_j=\emptyset$ or $|W_i\cap W_j|=1$ and $d(W_i,W_j)=0$. 

We define an auxiliary graph $G^\ddagger$ as follows. The vertex set is $W_1,\ldots, W_n$ and there is an edge between two vertices if and only if the corresponding sets in $G$ intersect.

\begin{claim}
The graph $G^\ddagger$ is either a tree or a cycle.
\end{claim}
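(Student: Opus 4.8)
The plan is to analyse the intersection pattern of the $K_4$-subgraphs $W_1,\dots,W_n$ using the structural facts we have already accumulated, namely that distinct $W_i,W_j$ either are disjoint or satisfy $|W_i\cap W_j|=1$ and $d(W_i,W_j)=0$, that each $W_i$ is semi-critical (by the counting in Lemma \ref{lem:criticalsemi}), that $G$ has no proper critical set, and that $G$ is $3$-connected. First I would recall that a connected subgraph of $\mathcal{A}$ built from a walk in $G^\ddagger$ — i.e.\ the union $W_{i_0}\cup\cdots\cup W_{i_\ell}$ along a path — corresponds to a semi-critical set in $G$: each new $W_{i_{j+1}}$ adds three new vertices (it meets the previous union in a single vertex) and six new edges, so the count $i(\cdot)=2|\cdot|-2$ is preserved from the single $K_4$. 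Here the hypothesis $|V|\ge 9$ is what forces such a union to be a \emph{proper} subset, so no union of $K_4$'s along a path can equal $V$.

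Next I would bound the cycle structure. Suppose $G^\ddagger$ contains a cycle $C=(c_0,c_1,\dots,c_\ell=c_0)$ with $\ell\ge 3$; each $c_i$ is a $W$-set. Deleting one vertex $c_0$ from $C$ leaves a path $P$, which as above corresponds to a semi-critical set $P'=\bigcup_{i=1}^{\ell-1} c_i$ with $i(P')=2|P'|-2$. Now reattaching $c_0$: because $c_0$ meets the path at (at least) its two endpoints in two \emph{distinct} vertices, adding $c_0$ contributes at most one new vertex but six new edges (the edges of the $K_4$ on $c_0$), which would push the count to at least $2(|P'|+1)-1$, i.e.\ the union would be critical, hence equal to $V$ (no proper critical sets). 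But $|V|\ge 9$ while $|P'|+1\le 3(\ell-1)+1$ vertices is not automatically a contradiction — so the precise bookkeeping must be done, exactly mirroring the cycle argument inside the proof of Lemma \ref{lem:properlem}; there one derives a contradiction either with properness of a critical set or with $3$-connectivity (a cut-vertex of $\mathcal{A}$). I would run the same dichotomy: if the reattached union is all of $V$, the overlap structure forces a single shared vertex playing the role of a cut-vertex of $G$, contradicting $3$-connectivity; if it is proper, we contradict the absence of proper critical sets. This shows $G^\ddagger$ has \emph{at most one} cycle, and if it has a cycle it cannot have any additional edges (a chord or a pendant edge would again create a critical set or a cut-vertex by the same computation).

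Finally, combining: $G^\ddagger$ is connected (this is the hypothesis that $\mathcal{A}$ is connected), and it has at most one cycle with no extra edges — so it is either acyclic, i.e.\ a tree, or it is a single cycle. The main obstacle I anticipate is the careful edge/vertex accounting in the cycle case: one must verify that closing up a path of $K_4$'s genuinely forces the critical (not merely semi-critical) count, keeping track of whether the cycle-closing $W$-set could meet the path in more than the two endpoint vertices and whether two $W$-sets in the cycle could coincide or share the same contact vertex. These degenerate overlaps are handled by the standard submodularity estimate (as in Equation \eqref{eqnnew}) together with $3$-connectivity, just as in Lemma \ref{lem:properlem}, so no genuinely new idea is needed — only a disciplined repetition of that argument in the present setting with the bound $|V|\ge 9$.
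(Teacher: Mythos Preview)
Your overall strategy matches the paper's: show that a path in $G^\ddagger$ gives a semi-critical set, and then analyse what happens when you close up a cycle. However, the execution goes wrong at the key counting step and in the dichotomy that follows.

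First, the arithmetic. When you reattach $c_0$ to the path $P'$, the set $c_0$ has four vertices and meets $P'$ in at least two of them (one for each endpoint of the cycle), so it contributes \emph{at most two} new vertices, not at most one. With $|C'|\le |P'|+2$ and the six new $K_4$-edges you get
\[
i(C')\;\ge\;i(P')+6\;\ge\;(2|P'|-2)+6\;=\;2|P'|+4\;\ge\;2|C'|.
\]
So $C'$ is not merely critical: the count forces $i(C')\ge 2|C'|$, and since $G$ is a $(2,1)$-circuit this means $C'=V$ on the nose. Your bound $2(|P'|+1)-1$ and the conclusion ``critical'' are both off.

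Second, and more importantly, your dichotomy misfires. You argue that if $C'=V$ then ``the overlap structure forces a single shared vertex playing the role of a cut-vertex of $G$'', giving a contradiction with $3$-connectivity. That is not true, and it is not what the paper does. The conclusion $C'=V$ is not a contradiction at all; it is precisely the statement that the cycle already accounts for every $W_i$, so the cycle \emph{is} all of $G^\ddagger$. This is exactly the ``cycle'' alternative in the Claim. The cut-pair/cut-vertex contradiction you have in mind appears only \emph{after} the Claim, when the paper rules out the cycle alternative using $|Y|\le 1$ and $3$-connectivity; it is not part of the proof of the Claim itself.

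Once the count is corrected, the argument is one line: any cycle in $G^\ddagger$ has $C'=V$, hence spans $G^\ddagger$; otherwise $G^\ddagger$ is acyclic and, being connected (since $\mathcal{A}$ is connected), a tree. No appeal to Lemma~\ref{lem:properlem} or to $3$-connectivity is needed inside the Claim.
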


\begin{proof}
We show that if $G^\ddagger$ contains a cycle this must be the whole graph; then, as $\A$ is connected, the claim follows.
As $W_i\cap W_j=\emptyset$ or $|W_i\cap W_j|=1$ and $d(W_i,W_j)=0$ for $1\leq i<j\leq n$ a path in $G^\ddagger$ corresponds to a semi-critical set in $G$.

Suppose that $G^\ddagger$ contains a cycle. Let $C=(c_0,c_1,\ldots, c_\ell)$ and let $P$ be the path obtained by deleting $c_0$ from $C$. Consider the set of vertices $P'=\cup_{i=1}^\ell c_i$ in $G$, then $i(P')=2|P'|-2$. Now let $C'=\cup_{i=1}^\ell c_i$ and note that $G[C']$ contains an additional two vertices and an additional six edges from $G[P']$. Hence $C'=V$.
\end{proof}

If $G^\ddagger$ is a cycle, then $i(\cup_{i=1}^kW_i)=2|\cup_{i=1}^kW_i|$ so we must have $|Y|=0$ and every edge of $G$ is contained in one of the $K_4$'s so $G$ has a cut-pair, a contradiction. Hence, $G^\ddagger$ is a tree. 

As $|V|\geq 9$ and $\mathcal{A}=\cup_{i=1}^k G[W_i]$ is connected, $G^\ddagger$ has at least three vertices. Note that $2|\cup_{i=1}^kW_i|-2\leq i(\cup_{i=1}^kW_i)\leq 2|\cup_{i=1}^kW_i|-1$. In either case we necessarily have $|Y|=1$. Denote the vertex in $Y$ as $v$.
If $\cup_{i=1}^k W_i$ is a semi-critical set in $G$ then $d(v)=4$. As $G^\ddagger$ is a tree, $G$ contains a cut-pair (the vertex $v$ and one of the vertices in the intersection of a pair $W_i$, $W_j$ that intersect); a contradiction. Hence we may suppose that $\cup_{i=1}^k W_i$ is a critical set. Then $d(v)=3$. 
Since $\cup_{i=1}^k W_i$ is critical we have $d(W_i,W_j)=1$ for some pair $i,j$ and $d(W_k,W_\ell)=0$ for all other pairs. If we regard this edge as an edge $W_iW_j$ in $G^\ddagger$ then we get a cycle. If this cycle does not contain every vertex of $G^\ddagger$ then $G$ has a proper critical set, contrary to our assumption.
Hence, by reordering if necessary, we may assume that $G^\ddagger$ is a path on $W_1,W_2,\dots,W_k$ and that $d(W_1,W_k)=1$. It is now easy to find a cutpair in $G$ if $|V(G^\ddagger)|\geq 4$ (thereby contradicting 3-connectivity). Hence, as $G^\ddagger$ contains at least three vertices, $|V(G^\ddagger)|=3$ and evidently $v$ is admissible.

Now, suppose that (ii) holds. 
Let $\X=\{X\subseteq V\mid X\text{ is a node-critical set in }G\}$.
Suppose that $\X=\emptyset$. By Lemma \ref{lem:properlem}, $|V_3^*|\geq 2$ and, by Lemma \ref{lem:V3}, $G[V_3^*]$ is a forest. Let $v$ be a vertex of $G[V_3^*]$.
As $v$ is not contained in a copy of $K_4$ and $\X=\emptyset$, by Lemma \ref{lem:admissibleedge}, $v$ must be admissible. 

So we may assume that $\X\neq\emptyset$. By Lemma \ref{lem:properlem} we have $|V_3^*|\geq 2$. By Lemmas \ref{lem:1edge} and \ref{lem:2edges}, we may assume that each vertex in $V_3^*$ has no edges between the neighbours.

Choose $X\in\X$ to be a maximal node-critical set chosen over all nodes in $V_3^*$. Suppose $X$ is node-critical for $v$ on $x,y$ where $N(v)=\{x,y,z\}$ and $d_G(z)\geq 4$. By Lemma \ref{lem:properlem}, $V-X-v$ contains a node $u\in V_3^*$ and, by Lemma \ref{lem:V3}, we may choose $u$ to be a leaf in $G[V_3^*-X-v]$.

By the maximality of $X$, each vertex $t\in V-X-v-z$ has at most one neighbour in $X$. Hence $d_{G[V_3^*]}(u)\leq 2$, so $d_{G[V_3]}(u)\leq 2$. 
If $d_{G[V_3^*]}(u)=2$, then, since $u$ has precisely one neighbour $w$ in $X$ and since $u$ is a leaf in $G[V_3^*-X-v]$, it follows that $w$ is a node. Thus Lemma \ref{lem:leaf} Part (1) and the maximality of $|X|$ imply that $u$ is an admissible node.
If $d_{G[V_3^*]}(u)=0\text{ or }1$, then Lemma \ref{lem:leaf} Part (2) and the maximality of $|X|$ imply that $u$ is admissible. 
\end{proof}

\section{Sum Moves}
\label{sec:sumsec}

In this section we reduce the connectivity assumptions from the previous sections by defining operations that `pull apart' $(2,1)$-circuits into two smaller $(2,1)$-circuits.

Consider a graph $G=(V,E)$ that is either not 3-connected or has a non-trivial 4-edge-cutset. We will provide a series of lemmas that consider the cases where $G$ satisfies one of the following:
\begin{enumerate}
\item a cut-vertex $x$, $A,B\subsetneq V$ such that $A\cap B=\{x\}$, $A\cup B=V$, $i(A)=2|A|-1$ and $i(B)=2|B|-1$;
\item a cut-pair $x,y$ (where neither are cut-vertices), $xy\notin E$, $A,B\subsetneq V$ such that $A\cap B=\{x,y\}$, $A\cup B=V$ and  either
\begin{enumerate}
\item $i(A)=2|A|-2$ and $i(B)=2|B|-2$, or
\item $i(A)=2|A|-1$ and $i(B)=2|B|-3$;
\end{enumerate}
\item a cut-pair $x,y$ (where neither are cut-vertices), $xy\in E$, $A,B\subsetneq V$ such that $A\cap B=\{x,y\}$, $A\cup B=V$, $i(A)=2|A|-1$ and $i(B)=2|B|-2$;
\item a 3-edge-cutset $\{x_iy_i: x_i\in A, y_i \in B, 1\leq i \leq 3\}$, where $|\{x_i,y_i\mid 1\leq i\leq 3\}|=6$, for $A,B\subsetneq V$, such that $A\cap B=\emptyset$,  $A\cup B=V$, $i(A)=2|A|-1$ and $i(B)=2|B|-2$; or
\item a 4-edge-cutset $\{x_iy_i: x_i\in A, y_i \in B, 1\leq i \leq 4\}$ for $A,B\subsetneq V$, such that $A\cap B=\emptyset$, $A\cup B=V$, $i(A)=2|A|-2$ and $i(B)=2|B|-2$.
\end{enumerate}

\begin{lem}\label{lem:sum1}
Let $G=(V,E)$ be a graph satisfying (1) in the list above. Let $a_i,b_i,c_i,d_i$, for $i=A,B$ be eight distinct vertices not in $V$. Then $G$ is a $(2,1)$-circuit if and only if the graphs $G_i=G[i]\cup (K_5(a_i,b_i,c_i,d_i,x)-e)$, for $i=A,B$, are $(2,1)$-circuits. (See Figure \ref{fig:sum1}.)
\end{lem}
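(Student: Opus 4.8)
The plan is to verify the two numerical conditions for being a $(2,1)$-circuit — the edge count $|E|=2|V|$ and the sparsity bounds $i(X)\le 2|X|-1$ for $X\subsetneq V$ — on both sides, exploiting that $H:=K_5(a_i,b_i,c_i,d_i,x)-e$ is itself ``$(2,1)$-sparse'' with only its full vertex set critical. First I would record the bookkeeping. Since $x$ is a cut-vertex of $G$, every edge lies in $G[A]$ or $G[B]$, so $|E|=i_G(A)+i_G(B)$; also $G_i$ is the union of $G[i]$ and $H$ sharing only $x$, where $H$ has $9$ edges, so $|E(G_i)|=i_G(i)+9$ and $|V(G_i)|=|i|+4$. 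Using $i_G(A)=2|A|-1$, $i_G(B)=2|B|-1$ from hypothesis (1) and $|V|=|A|+|B|-1$, one checks at once that $|E|=2|V|$ and $|E(G_i)|=2|V(G_i)|$ hold automatically, so the content of the lemma is the equivalence of the sparsity conditions. I would also note the elementary facts about $H$: for $\emptyset\ne S\subseteq V(H)$ one has $i_H(S)\le 2|S|-1$, with equality iff $S=V(H)$, and in fact $i_H(S)\le 2|S|-2$ whenever $|S|\le 4$; these are independent of which edge $e$ is. Finally, both $|A|,|B|\ge 2$ since $A\cup B=V$ with $A,B\subsetneq V$, and if either $G$ or $G_A$ is a $(2,1)$-circuit then $x$ carries no loop there: $x$ is a cut-vertex of that graph, so if $\{x\}$ were critical then $\{x\}$ and the side of $A$ containing an $x$-neighbour would violate the $d(X,Y)=0$ conclusion of Lemma \ref{lem:union}; hence $i(\{x\})=0$.

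For ($\Rightarrow$), assume $G$ is a $(2,1)$-circuit; by the symmetry between $A$ and $B$ it suffices to show $G_A$ is one. The key structural point, via Lemma \ref{lem:union}, is that the only critical set $Y$ of $G$ with $x\in Y\subseteq A$ is $Y=A$ itself: if $x\in Y\subsetneq A$ were critical, then $Y$ and $B$ are critical sets meeting in $\{x\}$ with union $\ne V$, so Lemma \ref{lem:union} would force $\{x\}$ critical, contradicting the no-loop observation. Now take any $\emptyset\ne X\subsetneq V(G_A)$ and write $P=X\cap A$, $S=X\cap\{a_A,b_A,c_A,d_A,x\}$, so that $i_{G_A}(X)=i_G(P)+i_H(S)$ and $|X|=|P|+|S|-|\{x\}\cap X|$. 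A short case split finishes it: if $x\notin X$ then $i_G(P)\le 2|P|-1$ and $i_H(S)\le 2|S|-2$ (as $S\ne V(H)$) leave room to spare; if $x\in X$ and $S\ne V(H)$, combining $i_G(P)\le 2|P|-1$ with $i_H(S)\le 2|S|-2$ gives exactly $2|X|-1$; and if $x\in X$ and $S=V(H)$ then $i_{G_A}(X)=i_G(P)+9$ with $x\in P$ and $P\ne A$ (else $X=V(G_A)$), so $i_G(P)\le 2|P|-2$ by the structural point, again giving $2|X|-1$.

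For ($\Leftarrow$), assume $G_A$ and $G_B$ are $(2,1)$-circuits. By the same argument applied inside $G_A$ — using that $C:=\{a_A,b_A,c_A,d_A,x\}$ is critical in $G_A$, that $x$ is a cut-vertex of $G_A$, and Lemma \ref{lem:union} — the only critical set of $G_A$ contained in $A$ and containing $x$ is $A$ itself; likewise in $G_B$. Given $\emptyset\ne X\subsetneq V$, put $X_A=X\cap A$, $X_B=X\cap B$; since $x$ is a cut-vertex of $G$ and $i(\{x\})=0$ we get $i_G(X)=i_G(X_A)+i_G(X_B)=i_{G_A}(X_A)+i_{G_B}(X_B)$. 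If $x\notin X$ then $X_A,X_B$ are disjoint, each nonempty part contributes at most $2|\cdot|-1$ and an empty part $0$, so $i_G(X)\le 2|X|-1$. If $x\in X$ then $X_A,X_B$ both contain $x$ and cannot both equal $A,B$ (else $X=V$); say $X_A\ne A$, so $X_A$ is not critical in $G_A$ and $i_{G_A}(X_A)\le 2|X_A|-2$, whence $i_G(X)\le(2|X_A|-2)+(2|X_B|-1)=2(|X_A|+|X_B|-1)-1=2|X|-1$. This gives the sparsity condition for $G$, completing the proof.

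I expect the only real obstacle to be the tightness bookkeeping in the case $x\in X$: naively the two pieces each contribute up to $2|\cdot|-1$ and the shared cut-vertex then produces an off-by-one overshoot, and the resolution in both directions is precisely the observation, delivered by Lemma \ref{lem:union}, that a critical set through the cut-vertex confined to one side must be that whole side. Everything else is routine counting, and the argument is uniform in the choice of the removed edge $e$.
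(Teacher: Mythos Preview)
Your argument is correct and follows essentially the same route as the paper's: both directions rest on the fact that a critical set containing $x$ and lying inside one side must be that whole side, and both deduce this by pairing such a set with the complementary critical piece (you via Lemma~\ref{lem:union}, the paper by directly adjoining $\{a_A,b_A,c_A,d_A\}$ to $A\cap X$ inside $G_A$ and reading off a density-$2|\cdot|$ subset). One minor slip: your justification of $i(\{x\})=0$ through the $d(X,Y)=0$ clause of Lemma~\ref{lem:union} is vacuous as written, since with $\{x\}\subseteq A$ one has $\{x\}\setminus A=\emptyset$; the claim, however, drops out of your own bookkeeping, as $|E|=i(A)+i(B)-i(\{x\})$ together with $|E|=2|V|$ forces $i(\{x\})=0$.
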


\begin{figure}
\begin{center}
\begin{tikzpicture}[scale=1, fill=gray!30, vertex/.style={circle,inner sep=2,fill=black,draw}]

\filldraw[thick] (1,1) ellipse (1 and 0.8);
\node at (1,0.5) [label=north:$2|A|-1$]{};

\filldraw[thick] (3,1) ellipse (1 and 0.8);
\node at (3,0.5) [label=north:$2|B|-1$]{};

\node at (2,1) [vertex]{};

\draw[ultra thick,<->](4.5,1) -- (5,1);

\filldraw[thick] (6.5,1) ellipse (1 and 0.8);
\node at (6.5,0.5) [label=north:$2|A|-1$]{};

\draw (7.5,1) -- (8.5,0.75) -- (8,1.5) -- (8,0.5) -- (8.5,1.25) -- cycle;
\draw (8.5,1.25) -- (8,1.5) -- (7.5,1) -- (8,0.5) -- (8.5,0.75);

\node at (7.5,1) [vertex]{};
\node at (8.5,0.75) [vertex]{};
\node at (8.5,1.25) [vertex]{};
\node at (8,1.5) [vertex]{};
\node at (8,0.5) [vertex]{};

\filldraw[thick] (11,1) ellipse (1 and 0.8);
\node at (11,0.5) [label=north:$2|B|-1$]{};

\draw (10,1) -- (9,0.75) -- (9.5,1.5) -- (9.5,0.5) -- (9,1.25) -- cycle;
\draw (9,1.25) -- (9.5,1.5) -- (10,1) -- (9.5,0.5) -- (9,0.75);

\node at (10,1) [vertex]{};
\node at (9,0.75) [vertex]{};
\node at (9,1.25) [vertex]{};
\node at (9.5,1.5) [vertex]{};
\node at (9.5,0.5) [vertex]{};

\end{tikzpicture}
\end{center}
\caption{Illustration of the sum move for Case (1).}
\label{fig:sum1}
\end{figure}
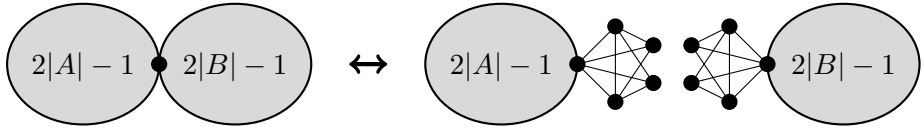

Note that the choice of the missing edge in the $K_5-e$ is arbitrary.

\begin{proof}
Suppose $G_A$ and $G_B$ are $(2,1)$-circuits. Assume for a contradiction that $G$ is not. Then there is a proper subset $X\subsetneq V$ with $i(X)=i(A\cap X)+i(B\cap X)\geq 2|X|$. However $i(A\cap X)\leq 2|A\cap X|-1$, $i(B\cap X)\leq 2|B\cap X|-1$ and $|A\cap X|+|B\cap X|\leq |X|+1$ so
\[ 2|X|\geq 2(|A\cap X|+|B\cap X|)-2\geq i(A\cap X)+i(B\cap X)\geq 2|X|. \]
Hence, we have equality throughout and in particular $i(A\cap X)=2|A\cap X|-1$, $i(B\cap X)=2|B\cap X|-1$ and $(A\cap X)\cap(B\cap X)=\{x
\}$. Now $i((A\cap X)\cup\{a_A,b_A,c_A,d_A\})=2|A\cap X|-1+9=2(|A\cap X|+4)$, so, as $G_A$ is a $(2,1)$-circuit, we have that $(A\cap X)=A$. Similarly $(B\cap X)=B$. Hence
$X=V$, a contradiction.

Conversely, suppose $G$ is a $(2,1)$-circuit. Then for any $X\subseteq A$ we have $i(X)\leq 2|X|-1$. Let $Y\subseteq \{a_i,b_i,c_i,d_i,x\}$, then $i(Y)\leq 2|Y|-1$. As $x$ is a cut-vertex, $i(X\cup Y)\leq 2|X\cup Y|$, since $|X\cup Y|\geq |X|+|Y|-1$. Note equality holds if and only if $X=A$ and $Y=\{a_i,b_i,c_i,d_i,x\}$.
\end{proof}

\begin{lem}\label{lem:sum2a}
Let $G=(V,E)$ be a graph satisfying (2) (a) in the list above. Let $a_i,b_i,c_i,d_i$, for $i=A,B$ be eight distinct vertices not in $V$. Then $G$ is a $(2,1)$-circuit if and only if the graphs $G_i=G[i]\cup (K_4(a_i,b_i,c_i,d_i)\cup \{ xa_i, yb_i, xc_i,yd_i \}$ for $i=A,B$, are $(2,1)$-circuits. (See Figure \ref{fig:sum2a}.)
\end{lem}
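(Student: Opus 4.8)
The plan is to follow the template of Lemma~\ref{lem:sum1}, the new feature being that the cut is a \emph{pair} $\{x,y\}$ rather than a single vertex, which forces a larger gadget ($K_4$ on four fresh vertices with two pendant edges to each of $x,y$) and a correspondingly finer count. Write $S_i=\{a_i,b_i,c_i,d_i\}$, so $V(G_i)=i\cup S_i$ and $G_i[S_i]=K_4$. The edge counts are automatic: $|E(G_i)|=i_G(i)+6+4=(2|i|-2)+10=2|V(G_i)|$, and, since $xy\notin E$ and $\{x,y\}$ is a cut-pair (so $G$ has no edge between $A\setminus B$ and $B\setminus A$), $|E(G)|=i_G(A)+i_G(B)=2|V|$. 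Hence in both directions the content is the sparsity inequality on proper vertex subsets.

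The one non-obvious ingredient I would record first is: if either $G$ or $G_A$ is a $(2,1)$-circuit then $i_G(Z')\le 2|Z'|-3$ whenever $\{x,y\}\subseteq Z'\subsetneq A$. In the first case apply the sparsity inequality to the proper, disjoint union $Z'\cup(B\setminus A)$, whose induced edge count, using $i_G(B)=2|B|-2$, collapses to $i_G(Z')+2|B\setminus A|+2$, and compare with $2(|Z'|+|B\setminus A|)-1$; in the second, apply it instead to $Z'\cup S_A$ inside $G_A$, whose induced edge count is $i_G(Z')+10$. Granting this, the direction ``$G$ a $(2,1)$-circuit $\Rightarrow$ each $G_i$ a $(2,1)$-circuit'' (treat $G_A$; $G_B$ is symmetric) goes by taking $Z\subsetneq V(G_A)$, splitting $Z=Z'\cup Z''$ with $Z'=Z\cap A$, $Z''=Z\cap S_A$, and writing $i_{G_A}(Z)=i_G(Z')+i_{G_A}(Z'')+e$, where $e$ counts the gadget edges $xa_A,xc_A,yb_A,yd_A$ lying in $Z$. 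Here $i_{G_A}(Z'')=\binom{|Z''|}{2}\le 2|Z''|-2$ (equality only for $|Z''|\in\{1,4\}$), $e\le|Z''|$ and $e$ contributes at most $2$ for each of $x,y$ in $Z'$, while $i_G(Z')\le 2|Z'|-1$ (with $i_G(A)=2|A|-2$, and $i_G(Z')\le 2|Z'|-3$ if $\{x,y\}\subseteq Z'\subsetneq A$ by the fact). A short case check on $|Z''|$ and on which of $x,y$ lie in $Z'$ then gives $i_{G_A}(Z)\le 2|Z|-1$ throughout; the only place the $-3$ bound is needed is $Z''=S_A$ with $\{x,y\}\subseteq Z'$, and there the excluded equality case is exactly $Z=V(G_A)$.

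For the converse, suppose $G_A,G_B$ are $(2,1)$-circuits but $G$ is not; as $|E(G)|=2|V|$ there is a nonempty $X\subsetneq V$ with $i_G(X)\ge 2|X|$, and with $X_A=A\cap X$, $X_B=B\cap X$ one again has $i_G(X)=i_G(X_A)+i_G(X_B)$. If $X\cap\{x,y\}=\emptyset$, or one of $X_A,X_B$ is empty, then $i_G(X_A)+i_G(X_B)\le 2|X|-2$, a contradiction. If $\{x,y\}\subseteq X$, the fact above forces $i_G(X_A)\le 2|X_A|-2$ and $i_G(X_B)\le 2|X_B|-2$, with equality only if $X_A=A$ and $X_B=B$; since $|X_A|+|X_B|=|X|+2$ this gives $i_G(X)\le 2|X|$ with equality only if $X=V$, again a contradiction.

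The remaining case $|X\cap\{x,y\}|=1$, say $x\in X$ and $y\notin X$, is the one I expect to be the main obstacle: here $|X_A|+|X_B|=|X|+1$ forces $i_G(X)=2|X|$ and that $X_A,X_B$ are both critical in $G$. The plan is to push the gadget in $G_A$: the set $X_A\cup S_A\cup\{y\}$ has $i_{G_A}$-count equal to $2|X_A\cup S_A\cup\{y\}|-1+d_G(y,X_A)$, so the sparsity inequality for the circuit $G_A$ forces $d_G(y,X_A)=0$ unless $X_A\cup S_A\cup\{y\}=V(G_A)$, i.e.\ $X_A=A\setminus\{y\}$, and symmetrically in $B$. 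When $d_G(y,X_A)=d_G(y,X_B)=0$ one derives a contradiction by considering $X\cup\{y\}$ together with the non-edge structure across the cut; the residual configurations $X_A=A\setminus\{y\}$ (or $X_B=B\setminus\{y\}$) must be eliminated using the connectivity/minimum-degree properties that $G$ carries in the setting where this lemma is applied (a $(2,1)$-circuit whose smaller cut-structure has already been removed). Making that last step precise — isolating exactly which property of $G$ is invoked and tracking the sub-cases — is the delicate part of the proof; everything else is routine counting as in Lemma~\ref{lem:sum1}.
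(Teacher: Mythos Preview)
Your treatment of the direction ``$G$ a $(2,1)$-circuit $\Rightarrow$ $G_A,G_B$ are $(2,1)$-circuits'' is correct and in fact more careful than the paper's: you isolate the key bound $i_G(Z')\le 2|Z'|-3$ for $\{x,y\}\subseteq Z'\subsetneq A$ (obtained by pushing $Z'$ against $S_A$ inside $G_A$, or against $B$ inside $G$) and use it in the one spot it is genuinely needed. The paper's converse argument is terser but follows the same counting.

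The genuine gap is exactly where you flag it. In the direction ``$G_A,G_B$ circuits $\Rightarrow$ $G$ circuit'', for $X\subsetneq V$ with $i_G(X)\ge 2|X|$, $x\in X$, $y\notin X$, you correctly reach the point where $X_A=A\cap X$ and $X_B=B\cap X$ are both critical with $X_A\cap X_B=\{x\}$, and then push through the gadget in $G_A$ to force either $d_{G[A]}(y,X_A)=0$ or $X_A=A\setminus\{y\}$. But neither alternative yields a contradiction from the \emph{stated} hypotheses alone: a $(2,1)$-circuit $G_A$ of the required shape can perfectly well contain a critical set $C\subset A$ with $x\in C$, $y\notin C$, $d_{G[A]}(y,C)=0$ (e.g.\ take $G[A]$ to be $K_5$ minus an edge on $\{x,p_1,p_2,p_3,p_4\}$ together with $yp_5,\,p_5p_3,\,p_5p_4$). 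Your fallback---invoking ``connectivity/minimum-degree properties that $G$ carries in the setting where this lemma is applied''---is not available, since in this direction $G$ is not assumed to be a circuit and condition~(2)(a) is the only hypothesis. The paper does \emph{not} take your gadget-pushing route here; instead, after obtaining that $A\cap X$ and $B\cap X$ are both critical, it simply asserts ``this is a contradiction as $(A\cap X)\cup(B\cap X)$ would induce a $(2,1)$-circuit not containing $y$''. You should scrutinise that line rather than accept it: taking $X$ minimal does make $G[X]$ a $(2,1)$-circuit with $x$ a cut-vertex, but Lemma~\ref{lem:connected} only gives $2$-edge-connectedness, which does not forbid cut-vertices. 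In short, your instinct that this sub-case is the crux is correct; the paper's one-line resolution is not self-evidently complete, and you should either supply the missing justification or identify what additional structure is being implicitly used.
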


\begin{figure}
\begin{center}
\begin{tikzpicture}[scale=1, fill=gray!30, vertex/.style={circle,inner sep=2,fill=black,draw}]

\filldraw [thick] plot [smooth cycle] coordinates {(0,1) (0.3,1.5) (1,1.7) (1.95,1.5) (1.85,1) (1.95,0.5) (1,0.3) (0.3,0.5)};
\node at (1,0.5) [label=north:$2|A|-2$]{};

\filldraw [thick] plot [smooth cycle] coordinates {(4,1) (3.7,1.5) (3,1.7) (2.05,1.5) (2.15,1) (2.05,0.5) (3,0.3) (3.7,0.5)};
\node at (3,0.5) [label=north:$2|B|-2$]{};

\node at (2,1.5) [vertex]{};
\node at (2,0.5) [vertex]{};

\draw[ultra thick,<->](4.5,1) -- (5,1);


\filldraw [xshift=5.5cm, thick] plot [smooth cycle] coordinates {(0,1) (0.3,1.5) (1,1.7) (1.95,1.5) (1.85,1) (1.95,0.5) (1,0.3) (0.3,0.5)};
\node at (6.5,0.5) [label=north:$2|A|-2$]{};

\draw(7.75,0.75) -- (8.25,0.75) --  (8.25,1.25) -- (7.75,1.25)-- cycle;
\draw(7.75,0.75) -- (8.25,1.25);
\draw(8.25,0.75) -- (7.75,1.25);

\draw(7.5,1.5) -- (7.75,1.25);
\draw(7.5,1.5) -- (8.25,1.25);
\draw(8.25,0.75) -- (7.5,0.5);
\draw(7.75,0.75) -- (7.5,0.5);

\node at (7.75,0.75) [vertex]{};
\node at (7.75,1.25) [vertex]{};
\node at (8.25,0.75) [vertex]{};
\node at (8.25,1.25) [vertex]{};
\node at (7.5,1.5) [vertex]{};
\node at (7.5,0.5) [vertex]{};


\filldraw [xshift=7.5cm, thick] plot [smooth cycle] coordinates {(4,1) (3.7,1.5) (3,1.7) (2.05,1.5) (2.15,1) (2.05,0.5) (3,0.3) (3.7,0.5)};
\node at (10.5,0.5) [label=north:$2|B|-2$]{};

\draw(8.75,0.75) -- (9.25,0.75) --  (9.25,1.25) -- (8.75,1.25)-- cycle;
\draw(8.75,0.75) -- (9.25,1.25);
\draw(9.25,0.75) -- (8.75,1.25);

\draw(9.5,1.5) -- (8.75,1.25);
\draw(9.5,1.5) -- (9.25,1.25);
\draw(9.25,0.75) -- (9.5,0.5);
\draw(8.75,0.75) -- (9.5,0.5);

\node at (8.75,0.75) [vertex]{};
\node at (8.75,1.25) [vertex]{};
\node at (9.25,0.75) [vertex]{};
\node at (9.25,1.25) [vertex]{};
\node at (9.5,1.5) [vertex]{};
\node at (9.5,0.5) [vertex]{};

\end{tikzpicture}
\end{center}
\caption{Illustration of the sum move for Case (2a).}
\label{fig:sum2a}
\end{figure}
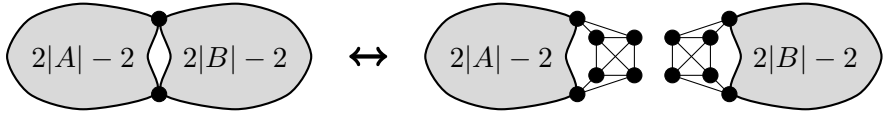

Note that the proof holds for other choices of the 4-edge-cuts in $G_A$ and $G_B$, provided two edges are incident with $x$, two are incident with $y$, simplicity is maintained and we do not choose the edge $xy$. 

\begin{proof}
Suppose $G_A$ and $G_B$ are $(2,1)$-circuits. Assume for a contradiction that $G$ is not. Then there is a proper subset $X\subsetneq V$, containing at least one of $x$ or $y$, with $i(X)=i(A\cap X)+i(B\cap X)\geq 2|X|$. First suppose that $A\cap X$ contains both $x$ and $y$. Then $i(A\cap X)\leq 2|A\cap X|-2$ with equality if and only if $X\cap A=A$; similarly $i(B\cap X)\leq 2|B\cap X|-2$; and $|A\cap X|+|B\cap X|\leq |X|+2$ so
\[ 2|X|\geq 2(|A\cap X|+|B\cap X|)-4\geq i(A\cap X)+i(B\cap X)\geq 2|X|. \]
Hence, we have equality throughout and in particular $i(A\cap X)=2|A\cap X|-2$, $i(B\cap X)=2|B\cap X|-2$. 
As noted above, this implies that $(A\cap X)=A$ and $(B\cap X)=B$. Hence $X=V$, a contradiction. 

Now suppose $A\cap X$ contains $x$ but not $y$. 
Then $|A\cap X|+|B\cap X|\leq |X|+1$.
Thus
\[ 2|X|\geq 2(|A\cap X|+|B\cap X|)-2\geq i(A\cap X)+i(B\cap X)\geq 2|X|. \]
Hence, we have equality throughout and in particular $i(A\cap X)=2|A\cap X|-1$ and $i(B\cap X)=2|B\cap X|-1$. This is a contradiction as $(A\cap X)\cup (B\cap X)$ would induce a $(2,1)$-circuit not containing $y$.

Conversely, suppose $G$ is a $(2,1)$-circuit. First note that for any subset $X\subseteq A$ containing $x$ but not $y$ and any subset $Y\subseteq \{a_A,b_A,c_A,d_A\}$ we have $i(X\cup Y)\leq 2|X\cup Y|-1$.
Now for any $X\subseteq A$ containing $x$ and $y$ we have $i(X)\leq 2|X|-2$ with equality if and only if $X=A$. Let $Y\subseteq \{a_i,b_i,c_i,d_i\}$, then $i(Y)\leq 2|Y|-2$ with equality if and only if $Y= \{a_i,b_i,c_i,d_i\}$. Thus $i(X\cup Y)\leq 2|X|-2+2|Y|-2+4= 2|X\cup Y|$ where equality holds if and only if $X=A$ and $Y=\{a_i,b_i,c_i,d_i\}$. 
\end{proof}

\begin{lem}\label{lem:invXtoT1}
Let $G$ be a 4-regular $(2,1)$-circuit which contains a cut-pair $\{x,y\}$ such that one component of $G[V\setminus\{x,y\}]$ is isomorphic to $K_4$ on the vertices $a,b,c,d$; and $\{xa,xb,yc,yd\}\in E$ where $z_1,z_2\in V\setminus\{a,b,c,d,y\}$ are neighbours of $x$. Then $x$ is admissible.
\end{lem}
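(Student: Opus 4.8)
\emph{Plan of proof.} The plan is to exhibit one explicit inverse $X$-replacement at $x$ and to verify, via Lemma \ref{lem:4reg}, that it produces a $(2,1)$-circuit.

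First I would pin down the local structure around the $K_4$. Since $G$ is $4$-regular and the $K_4$ on $\{a,b,c,d\}$ is a whole component of $G[V\setminus\{x,y\}]$, each of $a,b,c,d$ has exactly one edge leaving the $K_4$; these four edges are $xa,xb,yc,yd$. Hence $N(a)=\{b,c,d,x\}$, $N(b)=\{a,c,d,x\}$, and (as $x$ has degree four) $N(x)=\{a,b,z_1,z_2\}$, with $xy\notin E$. An inverse $X$-replacement on $x$ deletes $x$ and adds two independent edges inside $N(x)$; the pairing $\{ab,\,z_1z_2\}$ is ruled out because $ab\in E$, so the only candidates are the two ``crossing'' pairings. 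I would take $G':=(G-x)+az_1+bz_2$ (the pairing $\{az_2,\,bz_1\}$ works identically). Because $z_1,z_2\notin\{a,b,c,d,x\}=N(a)\cup N(b)$, neither $az_1$ nor $bz_2$ lies in $E$, and these two edges share no endpoint; so $G'$ is simple and the replacement has the required form.

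By Lemma \ref{lem:4reg} it then suffices to show $G'$ is connected and $4$-regular. Four-regularity is immediate: deleting $x$ lowers the degree of each of $a,b,z_1,z_2$ by one, and the two new edges restore it. The substantive point is connectedness, which I expect to be the main (if modest) obstacle, since $G$ need not be $3$-connected --- indeed $\{x,y\}$ is a cut-pair, so $G-x$ may be disconnected. I would argue as follows. Let $C$ be the component of $G'$ containing $a$. The $K_4$ edges among $\{a,b,c,d\}$ and the edges $yc,yd$ survive in $G'$, so $\{b,c,d,y\}\subseteq C$; and the new edges $az_1,bz_2$ force $z_1,z_2\in C$. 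Thus $N(x)\subseteq C$. If $G'$ had a second component $D$, then $D$ would be a union of components of $G-x$; choosing a component $D_0\subseteq D$ of $G-x$, every edge of $G$ leaving $D_0$ must go to $x$ (otherwise $D_0$ is not a full component of $G-x$), and connectedness of $G$ forces at least one such edge, so $D_0$ meets $N(x)$, contradicting $N(x)\subseteq C$ and $C\cap D=\emptyset$. Hence $G'$ is connected.

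Finally I would conclude: $G'$ is a simple, connected, $4$-regular graph on $|V|-1\ge 7$ vertices, hence a $(2,1)$-circuit by Lemma \ref{lem:4reg}, and it is obtained from $G$ by an inverse $X$-replacement at $x$ whose two added edges, $az_1$ and $bz_2$, join previously non-adjacent vertices. Therefore $x$ is admissible.
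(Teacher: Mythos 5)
Your proof is correct and follows essentially the same route as the paper's: perform the inverse $X$-replacement at $x$ adding $az_1$ and $bz_2$ (non-edges since $N(a)=\{b,c,d,x\}$ and $N(b)=\{a,c,d,x\}$), then invoke Lemma \ref{lem:4reg}. The only difference is that the paper simply asserts the resulting graph is connected and $4$-regular, whereas you supply the (correct) verification of connectedness via $N(x)\subseteq C$.
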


\begin{proof}
The edges $z_1a,z_2b\not\in G$; so performing an inverse X-replacement on $x$ that introduces the edges $z_1a$ and $z_2b$ yields a connected 4-regular graph. Hence, by Lemma \ref{lem:4reg}, the resulting graph is a $(2,1)$-circuit.
\end{proof}

\begin{lem}\label{lem:sum2b}
Let $G=(V,E)$ be a graph satisfying (2) (b) in the list above. Let $a_B,b_B,c_B,\break d_B,e_B$ be five distinct vertices not in $V$. Then $G$ is a $(2,1)$-circuit if and only if the graphs $G_A=G[A] \cup xy$ and $G_B=G[B]\cup (K_5(a_B,b_B,c_B,d_B,e_B)-f) \cup \{ xa_B,xb_B,yc_B,yd_B \}$ are $(2,1)$-circuits. (See Figure \ref{fig:sum2b}.)
\end{lem}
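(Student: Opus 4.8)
The plan is to imitate the proofs of Lemmas~\ref{lem:sum1} and~\ref{lem:sum2a}, working throughout with the deficiency $f(G)=2|V|-|E|$. Since $\{x,y\}$ is a cut-pair realising $A\cup B=V$, there are no edges of $G$ joining $A\setminus\{x,y\}$ to $B\setminus\{x,y\}$, and $xy\notin E$; hence for any $X\subseteq V$, writing $X_A=X\cap A$, $X_B=X\cap B$ and $j=|X\cap\{x,y\}|$, one has $i_G(X)=i_G(X_A)+i_G(X_B)$ and therefore
\[ f(G[X])=f(G[X_A])+f(G[X_B])-2j. \]
Moreover $G_A$ has $|A|$ vertices and $i_G(A)+1$ edges while $G_B$ has $|B|+5$ vertices and $i_G(B)+13$ edges, so $f(G_A)=2|A|-1-i_G(A)$ and $f(G_B)=2|B|-3-i_G(B)$; thus the ``$f(G_A)=f(G_B)=0$'' halves of the two circuit conditions are precisely the count hypotheses $i_G(A)=2|A|-1$, $i_G(B)=2|B|-3$ of case~(2)(b), and in particular $f(G)=0$ in either direction.

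The core of the argument is a collection of local bounds on $f$ of induced subgraphs. On the $A$-side, from $f(G_A[X_A])=f(G[X_A])-[\{x,y\}\subseteq X_A]$ one obtains: for every proper nonempty $X_A\subsetneq A$, $f(G[X_A])\ge 1$, and $f(G[X_A])\ge 2$ if in addition $\{x,y\}\subseteq X_A$, while $f(G[A])=1$. On the $B$-side the key identity, obtained by restricting $G_B$ to $X_B\cup Y$ for $Y\subseteq\{a_B,b_B,c_B,d_B,e_B\}=:S$, is
\[ f\big(G_B[X_B\cup Y]\big)=f(G[X_B])+f\big((K_5-f)[Y]\big)-e(X_B,Y), \]
where $e(X_B,Y)\le 2\,|\{x,y\}\cap X_B|$ counts the present edges among $xa_B,xb_B,yc_B,yd_B$. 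Taking $Y=S$ (so that $f((K_5-f)[S])=1$) and using that $G_B$ is a circuit yields $f(G[X_B])\ge 1,\,2,\,4$ according as $X_B\subsetneq B$ contains none, exactly one, or both of $x,y$, together with $f(G[B])=3$; conversely, when $G$ is a circuit these same four inequalities follow by submodularity applied to $X_B\cup A$ — which is a proper subset of $V$ except in the single case $X_B=B\setminus\{y\}$, and that case is handled by $i_G(B)=i_G(B\setminus\{y\})+d_{G[B]}(y)$ together with the hypothesis that $x$ is not a cut-vertex, forcing $d_{G[B]}(y)\ge 1$. The $A$-side ``$\ge 2$'' inequality for a circuit $G$ is the symmetric statement using $X_A\cup B$.

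With these bounds both implications are routine. If $G$ is a circuit, $G_A$ is a circuit by the $A$-side bounds, and for $G_B$ one checks $f(G_B[W])\ge 1$ for every proper $W=X_B\cup Y$ by substituting the $B$-side lower bounds and $f((K_5-f)[Y])\ge 1$ (valid for $Y\neq\emptyset$) into the displayed identity and splitting on $|\{x,y\}\cap X_B|$; the tightest case, $X_B=B$ and $Y=\{a_B,b_B,c_B,d_B\}$, survives because $K_5-f$ restricted to four vertices is $K_4$ or $K_4-e$ and so has deficiency at least $2$. Conversely, suppose $G_A$ and $G_B$ are circuits but $f(G[X])\le 0$ for some proper $X\subsetneq V$. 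After disposing of the cases $X\subseteq A$ and $X\subseteq B$ directly (using $G_A$, resp.\ $G_B$), we may assume $X_A$ and $X_B$ each strictly contain $X\cap\{x,y\}$, and since $X\neq V$ at least one of $X_A\neq A$, $X_B\neq B$ holds; feeding the local bounds into $f(G[X])=f(G[X_A])+f(G[X_B])-2j$ then gives $f(G[X])\ge 1$ in each of the cases $j=0,1,2$, a contradiction.

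The step I expect to be the main obstacle is getting the $B$-side bounds exactly right: one must observe that restricting $G_B$ to $X_B\cup S$ shifts the deficiency by exactly $-3$ when $\{x,y\}\subseteq X_B$, which is what makes the ``$-3$'' in condition~(2)(b) propagate correctly, and one must not overlook the boundary set $X_B=B\setminus\{y\}$, where the statement genuinely fails without the non-cut-vertex hypothesis. No special choice of the deleted edge $f$ of the $K_5$ is needed — only the crude facts that deleting an edge never decreases $f$ and that $K_4$ already has deficiency $2$ are used.
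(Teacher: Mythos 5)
Your proof is correct and takes essentially the same route as the paper's: decompose edge counts over the cut-pair $\{x,y\}$, bound the counts on each side using the circuit property of $G_A$ and $G_B$ (respectively of $G$), and combine by splitting on $|X\cap\{x,y\}|$. Your deficiency-function bookkeeping and your explicit treatment of the boundary sets $B\setminus\{x\}$ and $B\setminus\{y\}$ (where the non-cut-vertex hypothesis is genuinely needed) are if anything more careful than the paper's, which asserts the bound $i(X)\leq 2|X|-2$ for $X\subseteq B$ containing exactly one of $x,y$ without comment.
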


\begin{figure}
\begin{center}
\begin{tikzpicture}[scale=1, fill=gray!30, vertex/.style={circle,inner sep=2,fill=black,draw}]

\filldraw [thick] plot [smooth cycle] coordinates {(0,1) (0.3,1.5) (1,1.7) (1.95,1.5) (1.85,1) (1.95,0.5) (1,0.3) (0.3,0.5)};
\node at (1,0.5) [label=north:$2|A|-1$]{};

\filldraw [thick] plot [smooth cycle] coordinates {(4,1) (3.7,1.5) (3,1.7) (2.05,1.5) (2.15,1) (2.05,0.5) (3,0.3) (3.7,0.5)};
\node at (3,0.5) [label=north:$2|B|-3$]{};

\node at (2,1.5) [vertex]{};
\node at (2,0.5) [vertex]{};

\draw[ultra thick,<->](4.5,1) -- (5,1);


\filldraw [xshift=5.5cm, thick] plot [smooth cycle] coordinates {(0,1) (0.3,1.5) (1,1.7) (1.95,1.5) (1.85,1) (1.95,0.5) (1,0.3) (0.3,0.5)};
\node at (6.5,0.5) [label=north:$2|A|-1$]{};

\draw(7.5,0.5) --(7.5,1.5);

\node at (7.5,1.5) [vertex]{};
\node at (7.5,0.5) [vertex]{};


\filldraw [xshift=7.5cm, thick] plot [smooth cycle] coordinates {(4,1) (3.7,1.5) (3,1.7) (2.05,1.5) (2.15,1) (2.05,0.5) (3,0.3) (3.7,0.5)};
\node at (10.5,0.5) [label=north:$2|B|-3$]{};

\draw(9,0.75) -- (8.5,1.5) --  (8.5,0.5) -- (9,1.25)-- (8,1) --cycle;
\draw(9,0.75) -- (8.5,0.5) -- (8,1) -- (8.5,1.5) -- (9,1.25);

\draw(9,0.75) -- (9.5,0.5) -- (8.5,0.5);
\draw(9,1.25) -- (9.5,1.5) -- (8.5,1.5);

\node at (9,0.75) [vertex]{};
\node at (8.5,0.5) [vertex]{};
\node at (8,1) [vertex]{};
\node at (8.5,1.5) [vertex]{};
\node at (9,1.25) [vertex]{};

\node at (9.5,1.5) [vertex]{};
\node at (9.5,0.5) [vertex]{};

\end{tikzpicture}
\end{center}
\caption{Illustration of the sum move for Case (2b).}
\label{fig:sum2b}
\end{figure}
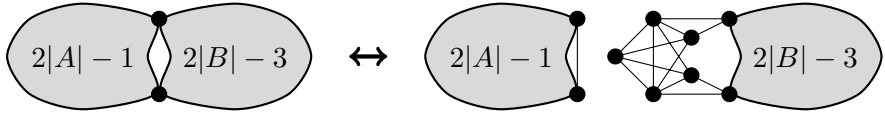

Note that the proof holds for other choices of the 4-edge-cuts in $G_A$ and $G_B$, provided two edges are incident with $x$, two are incident with $y$, simplicity is maintained and we do not choose the edge $xy$. Moreover the choice of the missing edge in $K_5-f$ is arbitrary.

\begin{proof}
Suppose $G_A$ and $G_B$ are $(2,1)$-circuits. Assume for a contradiction that $G$ is not. Then there is a proper subset $X\subsetneq V$, containing at least one of $x$ or $y$, with $i(X)=i(A\cap X)+i(B\cap X)\geq 2|X|$. First suppose that $A\cap X$ contains both $x$ and $y$. Then $i(A\cap X)\leq 2|A\cap X|-1$ with equality if and only if $X\cap A=A$. Similarly $i(B\cap X)\leq 2|B\cap X|-3$ and $|A\cap X|+|B\cap X|\leq |X|+2$ so
\[ 2|X|\geq 2(|A\cap X|+|B\cap X|)-4\geq i(A\cap X)+i(B\cap X)\geq 2|X|. \]
Hence, we have equality throughout. Hence, $(A\cap X)=A$. 
Moreover, $i(B\cap X)=2|B\cap X|-3$. Now $i((B\cap X)\cup\{a_B,b_B,c_B,d_B,e_B\})=2|B\cap X|-3+13=2(|B\cap X|+5)$, so, as $G_B$ is a $(2,1)$-circuit, we have that $(B\cap X)=B$. Hence $X=V$, a contradiction.

Now suppose $A\cap X$ contains $x$ but not $y$. 
Then $|A\cap X|+|B\cap X|\leq |X|+1$.
Thus
\[ 2|X|\geq 2(|A\cap X|+|B\cap X|)-2\geq i(A\cap X)+i(B\cap X)\geq 2|X|. \]
Hence, we have equality throughout and in particular $i(A\cap X)=2|A\cap X|-1$ and $i(B\cap X)=2|B\cap X|-1$. This is a contradiction as $(A\cap X)\cup (B\cap X)$ would induce a $(2,1)$-circuit not containing $y$.

Conversely, suppose $G$ is a $(2,1)$-circuit. Clearly $G_A$ is a $(2,1)$-circuit.
Note that for any subset $X\subseteq B$ containing $x$ but not $y$ we have $i(X)\leq 2|X|-2$ and so for any subset $Y\subseteq \{a_B,b_B,c_B,d_B,e_B\}$ we have $i(X\cup Y)\leq 2|X\cup Y|-1$. 

Now for any $X\subseteq B$ containing $x$ and $y$ we have $i(X)\leq 2|X|-3$ with equality if and only if $X=B$. Let $Y\subseteq \{a_B,b_B,c_B,d_B,e_B\}$, then $i(Y)\leq 2|Y|-1$ with equality if and only if $Y=\{a_B,b_B,c_B,d_B,e_B\}$. Thus $i(X\cup Y)\leq 2|X|-3+2|Y|-1+4= 2|X\cup Y|$ where equality holds if and only if $X=B$ and $Y= \{a_B,b_B,c_B,d_B,e_B\}$. 
\end{proof}

We emphasise that in the following lemma that the edge $xy$ is included in the count for $i_G(A)$ and $i_G(B)$.

\begin{lem}\label{lem:sum3}
Let $G=(V,E)$ be a graph satisfying (3) in the list above. Let $a_A,b_A,c_A,d_A,\break a_B,b_B,c_B,d_B,e_B$ be nine distinct vertices not in $V$. Then $G$ is a $(2,1)$-circuit if and only if the graphs $G_A=(G[A]-xy)\cup K_4(a_A,b_A,c_A,d_A)\cup \{xa_A,xb_A,yc_A,yd_A\}$ and $G_B=(G[B]-xy)\cup (K_5(a_B,b_B,c_B,d_B,e_B)-f)\cup \{xa_B,xb_B,yc_B,yd_B\}$ are $(2,1)$-circuits. (See Figure \ref{fig:sum3}.)
\end{lem}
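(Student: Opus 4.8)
The plan is to establish both directions by the counting method of Lemmas~\ref{lem:sum2a} and~\ref{lem:sum2b}, taking care throughout that the edge $xy$ is counted once in each of $i_G(A)$ and $i_G(B)$. The opening bookkeeping is immediate: since $|V|=|A|+|B|-2$ and $G[A\cap B]$ consists of the single edge $xy$, one has $f(G)=2|V|-\bigl(i_G(A)+i_G(B)-1\bigr)=0$, $f(G_A)=2(|A|+4)-\bigl((i_G(A)-1)+6+4\bigr)=0$ and $f(G_B)=2(|B|+5)-\bigl((i_G(B)-1)+9+4\bigr)=0$, so it remains only to verify the proper-subgraph inequalities in each case.

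Suppose first that $G_A$ and $G_B$ are $(2,1)$-circuits and, for contradiction, that some proper $X\subsetneq V$ has $i_G(X)\ge 2|X|$. If $X\cap\{x,y\}=\emptyset$ then $G[X]$ is the disjoint union of subgraphs of $G[A]$ and $G[B]$, each on a proper subset of $V(G_A)$, resp.\ $V(G_B)$, forcing $i_G(X)<2|X|$; hence we may assume $X$ contains $x$ or $y$. Two ``blow-up'' facts are used: for $A'\subsetneq A$ with $x,y\in A'$, adjoining the four $K_4$-vertices inside $G_A$ yields $i_G(A')\le 2|A'|-2$, and for $B'\subsetneq B$ with $x,y\in B'$, adjoining the five vertices of the $K_5-f$ gadget inside $G_B$ yields $i_G(B')\le 2|B'|-3$. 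When $x,y\in X$, writing $X=(A\cap X)\cup(B\cap X)$ with intersection $\{x,y\}$, the bounds $i_G(A\cap X)\le 2|A\cap X|-1$ (equality only if $A\cap X=A$) and $i_G(B\cap X)\le 2|B\cap X|-2$ (equality only if $B\cap X=B$) give $i_G(X)=i_G(A\cap X)+i_G(B\cap X)-1\le 2|X|$, with equality forcing $X=V$. When exactly one of $x,y$ lies in $X$, say $x\in X$ and $y\notin X$, a similar tally gives $i_G(X)\le 2|X|$ with equality forcing $A\cap X$ and $B\cap X$ to be critical in $G$; but then adjoining the five $K_5-f$ vertices to $B\cap X$ inside $G_B$ produces a proper subset of $V(G_B)$ inducing twice as many edges as its order, contradicting that $G_B$ is a circuit.

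For the converse, assume $G$ is a $(2,1)$-circuit; we check the inequalities for $G_A$ (those for $G_B$ being analogous). We need the following estimates on $G$: $i_G(Z)\le 2|Z|-1$ for $Z\subseteq A$ with $\{x,y\}\not\subseteq Z$ (just the circuit condition); $i_G(Z)\le 2|Z|-2$ for $Z\subsetneq A$ with $\{x,y\}\subseteq Z$; and, for the $G_B$ computation, $i_G(Z)\le 2|Z|-3$ for $Z\subsetneq B$ with $\{x,y\}\subseteq Z$ together with $i_G(Z)\le 2|Z|-2$ for $Z\subseteq B$ with $|Z\cap\{x,y\}|=1$. The ``$\{x,y\}\subseteq Z$'' bounds follow because $Z\cup B$ (resp.\ $Z\cup A$) is a proper subset of $V$ with $i_G(Z\cup B)=i_G(Z)+i_G(B)-1$ (resp.\ $i_G(Z)+i_G(A)-1$), as $\{x,y\}$ is a vertex cut and $xy$ is the only edge it spans, and then the circuit condition applied to that union bounds $i_G(Z)$. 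For $Z\subseteq B$ with $|Z\cap\{x,y\}|=1$ the same union with $A$ works whenever $Z\cup A\subsetneq V$; the only exception is $Z=B\setminus\{y\}$ (or $Z=B\setminus\{x\}$), and there $i_G(Z)=2|Z|-1$ would force $d_{G[B]}(y)=1$ (resp.\ $d_{G[B]}(x)=1$), which makes $x$ (resp.\ $y$) a cut-vertex of $G$, contrary to hypothesis~(3). Granting these estimates, one writes a proper $Z\subsetneq V(G_A)$ as $Z=(Z\cap A)\cup Z_K$ with $Z_K\subseteq\{a_A,b_A,c_A,d_A\}$, expands $i_{G_A}(Z)$ as $i_{G[A]-xy}(Z\cap A)$ plus the $K_4$-edges inside $Z_K$ plus the attaching edges between $Z\cap A$ and $Z_K$, and verifies $i_{G_A}(Z)\le 2|Z|-1$ in each of the finitely many sub-cases indexed by $|Z_K|$ and $(Z\cap A)\cap\{x,y\}$; the analogous computation for $G_B$ uses $K_5-f$ in place of $K_4$ and the sharper estimates above.

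I expect the genuine obstacle to be the converse direction for $G_B$: because the $K_5-f$ gadget together with its four attaching edges is tight, the computation needs $i_G(Z)\le 2|Z|-2$ for every $Z\subseteq B$ meeting $\{x,y\}$ in exactly one vertex, and the only place this can fail numerically is the boundary case $Z\in\{B\setminus\{x\},\,B\setminus\{y\}\}$, which is exactly where the hypothesis that neither $x$ nor $y$ is a cut-vertex of $G$ is invoked. Everything else is bookkeeping of the same flavour as in Lemmas~\ref{lem:sum2a} and~\ref{lem:sum2b}.
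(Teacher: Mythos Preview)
Your proposal is correct and follows essentially the same route as the paper's proof: the same case split on $|X\cap\{x,y\}|$ for the forward direction, and the same ``split $Z$ into its $A$-part and its gadget-part'' bookkeeping for the converse. Your identification of the delicate point is spot-on: the paper dispatches the bound $i_G(Z)\le 2|Z|-2$ for $Z\subseteq B$ with $|Z\cap\{x,y\}|=1$ with the single phrase ``otherwise we contradict $G$ being a $(2,1)$-circuit'', whereas you correctly isolate the boundary case $Z=B\setminus\{y\}$ (or $B\setminus\{x\}$) where that union argument degenerates and the hypothesis that neither $x$ nor $y$ is a cut-vertex is actually needed.
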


\begin{figure}
\begin{center}
\begin{tikzpicture}[scale=1, fill=gray!30, vertex/.style={circle,inner sep=2,fill=black,draw}]

\filldraw [thick] plot [smooth cycle] coordinates {(0,1) (0.3,1.5) (1,1.7) (1.95,1.5) (1.85,1) (1.95,0.5) (1,0.3) (0.3,0.5)};
\node at (1,0.5) [label=north:$2|A|-1$]{};

\filldraw [thick] plot [smooth cycle] coordinates {(4,1) (3.7,1.5) (3,1.7) (2.05,1.5) (2.15,1) (2.05,0.5) (3,0.3) (3.7,0.5)};
\node at (3,0.5) [label=north:$2|B|-2$]{};

\draw (2,1.5) -- (2,0.5);

\node at (2,1.5) [vertex]{};
\node at (2,0.5) [vertex]{};

\draw[ultra thick,<->](4.5,1) -- (5,1);


\filldraw [xshift=5.5cm, thick] plot [smooth cycle] coordinates {(0,1) (0.3,1.5) (1,1.7) (1.95,1.5) (1.85,1) (1.95,0.5) (1,0.3) (0.3,0.5)};
\node at (6.5,0.5) [label=north:$2|A|-2$]{};

\node at (7.5,1.5) [vertex]{};
\node at (7.5,0.5) [vertex]{};

\draw(7.75,0.75) -- (8.25,0.75) --  (8.25,1.25) -- (7.75,1.25)-- cycle;
\draw(7.75,0.75) -- (8.25,1.25);
\draw(8.25,0.75) -- (7.75,1.25);

\draw(7.5,1.5) -- (7.75,1.25);
\draw(7.5,1.5) -- (8.25,1.25);
\draw(8.25,0.75) -- (7.5,0.5);
\draw(7.75,0.75) -- (7.5,0.5);

\node at (7.75,0.75) [vertex]{};
\node at (7.75,1.25) [vertex]{};
\node at (8.25,0.75) [vertex]{};
\node at (8.25,1.25) [vertex]{};
\node at (7.5,1.5) [vertex]{};
\node at (7.5,0.5) [vertex]{};


\filldraw [xshift=8.25cm, thick] plot [smooth cycle] coordinates {(4,1) (3.7,1.5) (3,1.7) (2.05,1.5) (2.15,1) (2.05,0.5) (3,0.3) (3.7,0.5)};
\node at (11.25,0.5) [label=north:$2|B|-3$]{};

\draw[xshift=0.75cm] (9,0.75) -- (8.5,1.5) --  (8.5,0.5) -- (9,1.25)-- (8,1) --cycle;
\draw[xshift=0.75cm] (9,0.75) -- (8.5,0.5) -- (8,1) -- (8.5,1.5) -- (9,1.25);

\draw[xshift=0.75cm] (9,0.75) -- (9.5,0.5) -- (8.5,0.5);
\draw[xshift=0.75cm] (9,1.25) -- (9.5,1.5) -- (8.5,1.5);

\node at (9.75,0.75) [vertex]{};
\node at (9.25,0.5) [vertex]{};
\node at (8.75,1) [vertex]{};
\node at (9.25,1.5) [vertex]{};
\node at (9.75,1.25) [vertex]{};

\node at (10.25,1.5) [vertex]{};
\node at (10.25,0.5) [vertex]{};

\end{tikzpicture}
\end{center}
\caption{Illustration of the sum move for Case (3).}
\label{fig:sum3}
\end{figure}
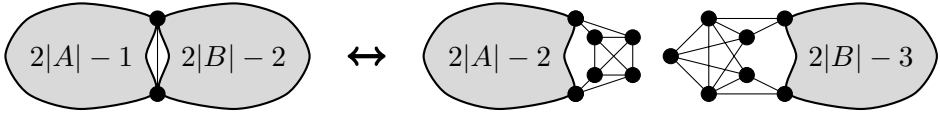

Note again that the proof holds for other choices of the 4-edge-cuts in $G_A$ and $G_B$, provided two edges are incident with $x$, two are incident with $y$, simplicity is maintained and we do not choose the edge $xy$. Moreover the choice of the missing edge in $K_5-f$ is arbitrary.

\begin{proof}
Suppose $G_A$ and $G_B$ are $(2,1)$-circuits. Assume, for a contradiction, that $G$ is not. Then there is a proper subset $X\subsetneq V$, containing at least one of $x$ or $y$, with either:
\begin{itemize}
\item $i_G(X)=i_{G_A}(A\cap X)+i_{G_B}(B\cap X)-1\geq 2|X|$, if $x,y\in X$; or
\item $i_G(X)=i_{G_A}(A\cap X)+i_{G_B}(B\cap X)\geq 2|X|$, if $x\in X$ and $y\not\in X$. 
\end{itemize}

First suppose that $x,y\in X$. Then $i_G(A\cap X)\leq 2|A\cap X|-1$ with equality if and only if $X\cap A=A$. Similarly, $i_G(B\cap X)\leq 2|B\cap X|-2$ and $|A\cap X|+|B\cap X|= |X|+2$ so
\[ 2|X|= 2(|A\cap X|+|B\cap X|)-4\geq i_G(A\cap X)+i_G(B\cap X)-1\geq 2|X|. \]
Hence, we have equality throughout. So, $(A\cap X)=A$.
Also $i_G(B\cap X)=2|B\cap X|-2$. Now $i_{G_B}((B\cap X)\cup\{a_B,b_B,c_B,d_B,e_B\})=2|B\cap X|-3+13=2(|B\cap X|+5)$, so, as $G_B$ is a $(2,1)$-circuit, we have that $(B\cap X)=B$. Hence $X=V$, a contradiction.

Now, suppose that $x\in X$ but $y\not\in X$.
Then $|A\cap X|+|B\cap X|\leq |X|+1$.
Thus
\[ 2|X|\geq 2(|A\cap X|+|B\cap X|)-2\geq i_{G_A}(A\cap X)+i_{G_B}(B\cap X)\geq 2|X|. \]
Hence, we have equality throughout and in particular $i_G(B\cap X)=i_{G_B}(B\cap X)=2|B\cap X|-1$. Then $(B\cap X)\cup\{a_B,b_B,c_B,d_B,e_B\}$ would induce a $(2,1)$-circuit in $G_B$ not containing $y$, a contradiction.

Conversely, suppose $G$ is a $(2,1)$-circuit. 
For any subset $X\subseteq A$ containing $x$ but not $y$ and any subset $Y\subseteq \{a_A,b_A,c_A,d_A\}$ we have $i_{G_A}(X\cup Y)\leq 2|X\cup Y|-1$.
Now, for any $X\subseteq A$ containing $x$ and $y$ we have $i_G(X)\leq 2|X|-1$ with equality if and only if $X=A$. Let $Y\subseteq \{a_A,b_A,c_A,d_A\}$, then $i_{G_A}(Y)\leq 2|Y|-2$ with equality if and only if $Y=\{a_A,b_A,c_A,d_A\}$. Thus $i_{G_A}(X\cup Y)\leq 2|X|-2+2|Y|-2+4= 2|X\cup Y|$ where equality holds if and only if $X=A$ and $Y= K_4(a_A,b_A,c_A,d_A)$. 

For any subset $X\subseteq B$ containing $x$ but not $y$ we have that $i_G(X)\leq 2|X|-2$ and for any subset $Y\subseteq \{a_B,b_B,c_B,d_B,e_B\}$ we have $i_{G_A}(X\cup Y)\leq 2|X\cup Y|-1$, otherwise we contradict $G$ being a $(2,1)$-circuit.

Now for any $X\subseteq B$ containing $x$ and $y$ we have $i_G(X)\leq 2|X|-2$ with equality if and only if $X=B$. Let $Y\subseteq \{a_B,b_B,c_B,d_B,e_B\}$, then $i_{G_A}(Y)\leq 2|Y|-1$ with equality if and only if $Y= \{a_B,b_B,c_B,d_B,e_B\}$. Thus $i_{G_A}(X\cup Y)\leq 2|X|-2-1+2|Y|-1+4= 2|X\cup Y|$ where equality holds if and only if $X=B$ and $Y= \{a_B,b_B,c_B,d_B,e_B\}$. 
\end{proof}

\begin{lem}\label{lem:sum4}
Let $G=(V,E)$ be a graph satisfying (4) in the list above. Let $a_A,a_B,b_B,c_B,\break d_B,e_B$ be six distinct vertices not in $V$. Then $G$ is a $(2,1)$-circuit if and only if the graphs $G_A=G[A]\cup a_A \cup \{x_1a_A,x_2a_A,x_3a_A \}$ and $G_B=G[B]\cup K_5(a_B,b_B,c_B,d_B,e_B)-f \cup \{y_1r_1,y_2r_2,y_3r_3\}$, for $r_i\in \{a_B,b_B,c_B,d_B,e_B\}$, are $(2,1)$-circuits. (See Figure \ref{fig:sum4}.)
\end{lem}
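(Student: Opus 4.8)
The plan is to follow the template of the preceding sum lemmas (Lemmas~\ref{lem:sum1}--\ref{lem:sum3}). First I would check that the vertex and edge counts are right: since $i_G(A)=2|A|-1$, the graph $G_A$ has $|A|+1$ vertices and $(2|A|-1)+3=2(|A|+1)$ edges, and since $i_G(B)=2|B|-2$ and $K_5-f$ contributes $9$ edges, $G_B$ has $|B|+5$ vertices and $(2|B|-2)+9+3=2(|B|+5)$ edges; so $f(G_A)=f(G_B)=0$ and it remains only to control the sparsity of proper subgraphs. For a set $X\subseteq V$ I would write $X_A=A\cap X$, $X_B=B\cap X$; because $A$ and $B$ partition $V$ and the only $A$--$B$ edges are the three cut edges, $i_G(X)=i_G(X_A)+i_G(X_B)+t(X)$ where $t(X)\in\{0,1,2,3\}$ counts the cut edges with both endpoints in $X$, and I set $p(X)=|\{j:x_j\in X_A\}|$, $q(X)=|\{j:y_j\in X_B\}|$, so $t(X)\le\min\{p(X),q(X)\}$. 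The distinctness of the six endpoints is what makes $G_A,G_B$ simple and makes these counts correct.

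For the ``if'' direction I would assume $G_A,G_B$ are $(2,1)$-circuits, suppose $G$ is not, and take a proper $X\subsetneq V$ with $i_G(X)\ge 2|X|$. Applying sparsity of the circuit $G_A$ to $X_A\cup\{a_A\}$ (proper in $G_A$ when $X_A\subsetneq A$) gives $i_G(X_A)\le 2|X_A|+1-p(X)$; applying sparsity of $G_B$ to $X_B\cup\{a_B,b_B,c_B,d_B,e_B\}$ (proper in $G_B$ when $X_B\subsetneq B$) gives $i_G(X_B)\le 2|X_B|-q(X)$; and $i_G(X_A)\le 2|X_A|-1$, $i_G(X_B)\le 2|X_B|-1$ hold directly in $G$. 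Splitting into the cases $X_A=A$, $X_B=B$ and ``both proper'', and combining with $t(X)\le\min\{p(X),q(X)\}$, forces $i_G(X)\le 2|X|-1$ unless $X=V$, a contradiction. For instance, when $X_A=A$ one has $t(X)=q(X)$, so $i_G(X)\le(2|A|-1)+(2|X_B|-q(X))+q(X)=2|X|-1$ unless $X_B=B$, i.e.\ $X=V$; the ``both proper'' case is the one where $t(X)$ has to be pushed up to $3$ and then the sharper gadget bounds close the estimate.

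For the ``only if'' direction I would assume $G$ is a $(2,1)$-circuit. A proper subgraph of $G_A$ has vertex set $X$ or $X\cup\{a_A\}$ with $X\subsetneq A$; the only case not immediate from $i_G(X)\le 2|X|-1$ is $X\cup\{a_A\}$ with $\{x_1,x_2,x_3\}\subseteq X$, where $X\cup B\subsetneq V$, so $i_G(X)+(2|B|-2)+3=i_G(X\cup B)\le 2|X\cup B|-1$ gives $i_G(X)\le 2|X|-2$, exactly what makes $i_G(X\cup\{a_A\})=i_G(X)+3\le 2|X|+1$. For $G_B$ a proper subgraph has vertex set $X\cup Z$ with $X\subseteq B$ and $Z\subseteq\{a_B,b_B,c_B,d_B,e_B\}$; here I would use that $K_5-f$ is critical, with $i_{K_5-f}(Z)\le 2|Z|-1$ and $i_{K_5-f}(Z)\le 2|Z|-2$ whenever $|Z|\le 4$, and handle the borderline configurations (where the pendant edges $y_jr_j$ drag several $y_j$ into $X$) by applying sparsity of $G$ to the proper set $X\cup A$ to get a strong enough bound on $i_G(X)$.

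I expect the only real obstacle to be the bookkeeping: for each test set one must track exactly how many of the three cut edges have one, both, or neither endpoint inside it and how many pendant edges are thereby induced, and then verify that every resulting inequality closes to $\le 2|\cdot|-1$ rather than $2|\cdot|$. Getting this to work uses all of the hypotheses $i_G(A)=2|A|-1$, $i_G(B)=2|B|-2$, the distinctness of the six endpoints, and the precise edge counts in $K_5-f$ (in particular that every proper subset of its five vertices spans at most $2|\cdot|-2$ edges); no idea beyond those already used for Lemma~\ref{lem:sum3} should be needed.
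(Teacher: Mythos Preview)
Your proposal is correct and follows essentially the same approach as the paper's proof: both arguments do a case analysis on how many of the three cut-edge endpoints on each side lie in the test set, deriving the needed sparsity bounds from the gadget structure (the cone vertex $a_A$ on the $A$-side and the critical $K_5-f$ on the $B$-side). Your notation $p,q,t$ packages the bookkeeping a bit more systematically, and you frame the ``if'' direction by contradiction whereas the paper verifies the sparsity inequality directly, but the underlying computations are the same.
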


Note that we may choose all the $r_i$'s to be equal giving a stronger final characterisation as then we only have one move for this case.

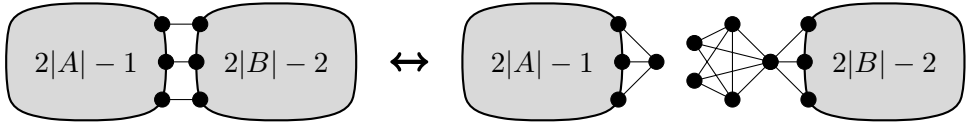
\begin{figure}
\begin{center}
\begin{tikzpicture}[scale=1, fill=gray!30, vertex/.style={circle,inner sep=2,fill=black,draw}]

\filldraw [thick] plot [smooth cycle] coordinates {(0,0.5) (0,1.5) (0.5,1.75) (1.5,1.75) (2,1.5) (2,0.5) (1.5,0.25) (0.5,0.25)};
\node at (1,0.5) [label=north:$2|A|-1$]{};

\filldraw [xshift=2.5cm, thick] plot [smooth cycle] coordinates {(0,0.5) (0,1.5) (0.5,1.75) (1.5,1.75) (2,1.5) (2,0.5) (1.5,0.25) (0.5,0.25)};
\node at (3.5,0.5) [label=north:$2|B|-2$]{};

\draw (2,1.5) -- (2.5,1.5);
\draw (2.05,1) -- (2.45,1);
\draw (2,0.5) -- (2.5,0.5);

\node at (2,1.5) [vertex]{};
\node at (2.05,1) [vertex]{};
\node at (2,0.5) [vertex]{};

\node at (2.5,1.5) [vertex]{};
\node at (2.45,1) [vertex]{};
\node at (2.5,0.5) [vertex]{};

\draw[ultra thick,<->](5,1) -- (5.5,1);


\filldraw [xshift=6cm, thick] plot [smooth cycle] coordinates {(0,0.5) (0,1.5) (0.5,1.75) (1.5,1.75) (2,1.5) (2,0.5) (1.5,0.25) (0.5,0.25)};
\node at (7,0.5) [label=north:$2|A|-1$]{};

\draw (8,1.5) -- (8.5,1);
\draw (8.05,1) -- (8.5,1);
\draw (8,0.5) -- (8.5,1);

\node at (8,1.5) [vertex]{};
\node at (8.05,1) [vertex]{};
\node at (8,0.5) [vertex]{};

\node at (8.5,1) [vertex]{};


\filldraw [xshift=10.5cm, thick] plot [smooth cycle] coordinates {(0,0.5) (0,1.5) (0.5,1.75) (1.5,1.75) (2,1.5) (2,0.5) (1.5,0.25) (0.5,0.25)};
\node at (11.5,0.5) [label=north:$2|B|-2$]{};

\draw (10.5,1.5) -- (10,1);
\draw (10.45,1) -- (10,1);
\draw (10.5,0.5) -- (10,1);

\node at (10.5,1.5) [vertex]{};
\node at (10.45,1) [vertex]{};
\node at (10.5,0.5) [vertex]{};

\node at (10,1) [vertex]{};

\draw (10,1) -- (9,1.25) -- (9.5,0.5) -- (9.5,1.5) -- (9,0.75) -- cycle;

\draw (9,1.25) -- (9.5,1.5) -- (10,1) -- (9.5,0.5) -- (9,0.75);

\node at (10,1) [vertex]{};
\node at (9,1.25) [vertex]{};
\node at (9.5,0.5) [vertex]{};
\node at (9.5,1.5) [vertex]{};
\node at (9,0.75) [vertex]{};

\end{tikzpicture}
\end{center}
\caption{Illustration of the sum move for Case (4).}
\label{fig:sum4}
\end{figure}

\begin{proof}
Suppose $G_A$ and $G_B$ are $(2,1)$-circuits. Let $X\subseteq A$ and $Y\subseteq B$. First suppose $X$ contains $x_1,x_2,x_3$ and $Y$ contains $y_1,y_2,y_3$, then $i(Y)\leq 2|Y|-2$. As $i(X)\leq 2|X|-1$, $i(X\cup Y) \leq 2|X\cup Y|-3+3=2|V|$, with equality if and only if $X\cup Y=V$. 
Next suppose $X$ contains $x_1$ and $x_2$ but not $x_3$ and $Y$ contains $y_1$ and $y_2$ but not $y_3$. Then $i(X)\leq 2|X|-1$ and, as $i_{G_B}=\{a_B,b_B,c_B,d_B,e_B\}=9$, $i(Y)\leq 2|Y|-2$. Thus $i(X\cup Y) \leq 2|X\cup Y|-3+2$.
Now suppose $X$ contains $x_1,x_2,x_3$ and $Y$ contains $y_1$ but not $y_2,y_3$. Then $i(X)\leq 2|X|-1$ and $i(Y)\leq 2|Y|-1$. Thus $i(X\cup Y) \leq 2|X\cup Y|-2+1$. The remaining cases can be proved similarly.

Conversely, suppose $G$ is a $(2,1)$-circuit. Then for any $X\subseteq A$ we have $i(X)\leq 2|X|-1$ and when  $x_1,x_2,x_3\in X$ we have equality if and only if $X=A$. It follows that $G_A$ is a $(2,1)$-circuit.

Let $Y\subseteq \{a_B,b_B,c_B,d_B,e_B\}$, then $i(Y)\leq 2|Y|-1$ with equality if and only if $Y= \{a_B,b_B,c_B,d_B,e_B\}$. Let $X\subseteq B$. If $|X\cap\{x_1,x_2,x_3\}|=0\text{ or }1$, then $i(X)\leq 2|X|-1$, thus $i(X\cup Y)\leq 2|X|-1+2|Y|-1+1= 2|X\cup Y|-1$. If $|X\cap\{x_1,x_2,x_3\}|=2$, then $i(X)\leq 2|X|-2$, otherwise $X\cup A$ would be a $(2,1)$-circuit that does not contain all of $B$,  thus $i(X\cup Y)\leq 2|X|-2+2|Y|-1+2= 2|X\cup Y|-1$. If $|X\cap\{x_1,x_2,x_3\}|=3$, then $i(X)\leq 2|X|-2$, with equality if and only if $X=B$, thus $i(X\cup Y)\leq 2|X|-2+2|Y|-1+3= 2|X\cup Y|$, with equality if and only if $X=B$ and $Y= \{a_B,b_B,c_B,d_B,e_B\}$.
\end{proof}

Recall that, e.g. in Theorem \ref{thm:Hen2admissible}, we only want to deal with non-trivial 4-edge cutsets when there are no proper critical sets. We utilise this fact in the next lemma.

\begin{lem}\label{lem:sum5a}
Let $G=(V,E)$ be a graph satisfying (5) in the list above. Let $a_A, b_A,c_A,d_A,\break a_B,b_B,c_B,d_B$ be eight distinct vertices not in $V$ and let $\E$ denote a set of four edges of the form $xy$ where $x \in A, y\in B$. 
For $I\in\{A,B\}$, if there are exactly:
\begin{itemize} 
\item 
3 distinct $x$'s in $I$, then let  $G_I=G[I]\cup K_4(a_I,b_I,c_I,d_I) \cup \F_1$, where $\F_1$ is a set of four edges such that each has exactly one end vertex in $I$, the degree of each vertex in $G_I$ is equal to their degree in $G$ and between them they have 
 two, three or four distinct end vertices in $\{a_I,b_I,c_I,d_I\}$, call the graphs in these cases type-2, type-3 and type-4 respectively and denote this set of end vertices by $F_I$;
\item four distinct $x$'s in $I$, then let  $G_I=G[I]\cup \{a_I\}\cup\{x_1a_I,x_2a_I,x_3a_I,x_4a_I\}$;
\end{itemize}
Then we have the following.
\begin{enumerate}
 \item If $G_I$ is a $(2,1)$-circuit for each $I\in\{A,B\}$, has either four distinct $x$'s in $I$, or is either  type-3 or type-4 with no proper critical sets, or type-2 with the unique proper critical set $I\cup F_I$, then $G$ is a $(2,1)$-circuit.
\item Suppose $G$ is a $(2,1)$-circuit with no proper critical sets. Then if $G_I$, for $I\in\{A,B\}$, either has four distinct $x$'s in $I$, or is type-3 or type-4, then $G_I$ is a $(2,1)$-circuit with no proper critical sets; and if $G_I$, for $I\in\{A,B\}$, is type-2, then it is a $(2,1)$-circuit with the proper critical set $I\cup F_I$.
\end{enumerate}
(See Figure \ref{fig:sum5} for an example.)
\end{lem}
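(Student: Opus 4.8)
The plan is to prove both implications by the technique used for the other ``sum'' lemmas: a hypothetical bad set --- a proper $X\subsetneq V$ with $i_G(X)\ge 2|X|$ in one direction, a proper critical set in the other --- is split across the cut as a disjoint union $X=X_A\sqcup X_B$ with $X_I=X\cap I$, and $i(X_I)$ on each side is compared with the relevant bounds for $G_I$ (or $G$). First I would record two preliminaries. A direct count gives $f(G_I)=0$, since $i_G(I)=2|I|-2$, there are four cut edges, and the gadget glued to $G[I]$ (a $K_4$ together with the four $\F_1$-edges, or a single vertex joined by four edges) contributes exactly twice its number of new vertices to the edge total. Second, because $\F_1$ (resp.\ the four attachment edges) is chosen to preserve every $G$-degree of $I$, for each $X_I\subseteq I$ the number of gadget-edges with their $I$-end in $X_I$ equals the number $c_I(X_I)$ of cut edges of $G$ whose $I$-end lies in $X_I$, whence
\[
i_{G_I}\bigl(X_I\cup K_I\bigr)=i_G(X_I)+e(K_I)+c_I(X_I),
\]
where $K_I$ is the set of new vertices and $e(K_I)\in\{6,0\}$. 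This identity drives everything.

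For direction~1, suppose $X\subsetneq V$ has $i_G(X)\ge 2|X|$, and write $X=X_A\sqcup X_B$; then $i_G(X)=i_G(X_A)+i_G(X_B)+d(X_A,X_B)$ with $d(X_A,X_B)\le\min\{c_A(X_A),c_B(X_B)\}$. When $|I\setminus X_I|\ge 2$, the set $X_I\cup K_I$ misses at least two vertices of $V(G_I)$ and is not $I\cup F_I$ (it contains all of $K_I$), hence is not a proper critical set of $G_I$; so $i_{G_I}(X_I\cup K_I)\le 2|X_I\cup K_I|-2$, which by the displayed identity gives $i_G(X_I)\le 2|X_I|-c_I(X_I)$. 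Summing these and using $d(X_A,X_B)\le\min\{c_A(X_A),c_B(X_B)\}$ yields $i_G(X)\le 2|X|$, with equality forcing $c_A(X_A)=c_B(X_B)=d(X_A,X_B)=0$ and $i_G(X_I)=2|X_I|$; but then whichever of $X_A$, $X_B$ is nonempty is a proper nonempty subset of the corresponding $V(G_I)$ with induced count $2|\cdot|$, contradicting that $G_I$ is a circuit. The boundary subcases $X_I=I$ (use $i_G(I)=2|I|-2$, $c_I(I)=4$) and $|I\setminus X_I|=1$ (write $X_I=I\setminus\{w\}$ and use $\delta(G)\ge3$ together with the fact that $w$ has a neighbour inside $I$, since at most two cut edges meet it) are treated separately and give $i_G(X)\le 2|X|-1$ directly. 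Hence no bad $X$ exists and $G$ is a $(2,1)$-circuit.

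For direction~2, assume $G$ is a $(2,1)$-circuit with no proper critical set, fix $I=A$, and consider $Y\subsetneq V(G_A)$, written $Y=Y_A\sqcup Y_K$ with $Y_K=Y\cap K_A$. From $i_{G_A}(Y)=i_G(Y_A)+\binom{|Y_K|}{2}+e(Y_A,Y_K)$ we get
\[
f(Y)=f_G(Y_A)+\Bigl(2|Y_K|-\binom{|Y_K|}{2}\Bigr)-e(Y_A,Y_K),
\]
where $f_G(\emptyset):=0$, the bracket takes the values $0,2,3,3,2$ as $|Y_K|$ runs through $0,\dots,4$, and $e(Y_A,Y_K)\le\min\{c_A(Y_A),\beta\}$ with $\beta$ the number of $\F_1$-edges whose $K$-end lies in $Y_K$. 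Since $G$ has no proper critical set, $f_G(Y_A)\ge2$ for every proper nonempty $Y_A\subsetneq V$, and $f_G(A)=2$; substituting these bounds shows $f(Y)\ge1$ in every case, so $G_A$ is a $(2,1)$-circuit. Tracing the equality $f(Y)=1$: if in addition $Y$ misses at least two vertices of $V(G_A)$, one is pushed into $Y_A=A$, $Y_K=F_A$, $|F_A|=2$ --- here it is essential to have chosen $\F_1$ to split its four edges evenly ($2$--$2$) over $F_A$, so that no $A\cup\{v\}$ is critical --- or else into $Y_A\subsetneq A$ with $c_A(Y_A)\in\{3,4\}$, where the exchange $Y_A\mapsto Y_A\cup B$ produces a critical or over-full subset of $G$ missing the same vertices of $A$; the no-proper-critical (and circuit) property of $G$ then forces that subset, and hence $Y$, to miss only one vertex, a contradiction. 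Consequently: in type~2 the unique proper critical set of $G_A$ is $A\cup F_A$ (it satisfies $i_{G_A}(A\cup F_A)=i_G(A)+1+4=2(|A|+2)-1$ and misses the two $K_4$-vertices outside $F_A$); in type~3, type~4, and the $4$-distinct case, $|F_A|\ge3$, so $A\cup F_A$ misses at most one vertex of $V(G_A)$ (it is all of $V(G_A)$ when $|F_A|=4$ or in the $4$-distinct case), so $G_A$ has no proper critical set. The case $I=B$ is identical.

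The step I expect to be the main obstacle is the critical-set bookkeeping in direction~2. One must make a judicious choice of $\F_1$ --- balanced over $F_I$ --- and verify that with it $I\cup F_I$ is the \emph{only} proper critical set of a type-2 piece; an unbalanced $\F_1$ would create spurious critical sets $I\cup\{v\}$. And to show the type-3, type-4, and $4$-distinct pieces have \emph{no} proper critical set, every candidate critical $Y$ in $G_I$ must be pushed back via $Y_I\mapsto Y_I\cup B$ to a critical or over-full subset of $G$, after which one counts exactly how many vertices are missed: it is precisely the technical meaning of \emph{proper} (missing at least two vertices, i.e.\ $|X|<|V|-1$) that makes $A\cup F_A$ a proper critical set when $|F_A|=2$ but not when $|F_A|=3$, and the hypotheses of the lemma are tuned to this. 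The remaining boundary subcases ($X_I=I$, $|I\setminus X_I|=1$, $Y_A\in\{\emptyset,A\}$) are routine, using only $\delta(G)\ge3$ and $i_G(I)=2|I|-2$.
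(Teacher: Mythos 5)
Your proposal is correct and follows essentially the same route as the paper: both directions are handled by splitting a hypothetical violating set across the cut and comparing $i(\cdot)$ against the sparsity bounds on each side, with the gadget's contribution accounted for by the degree-preservation of $\F_1$. Your systematic identity $i_{G_I}(X_I\cup K_I)=i_G(X_I)+e(K_I)+c_I(X_I)$ and the resulting formula for $f(Y)$ make the enumeration of possible critical sets of $G_I$ cleaner than the paper's case list, and your observation that $\F_1$ must be split $2$--$2$ over $F_I$ in the type-2 case (to keep $I\cup F_I$ the \emph{unique} proper critical set) is a point the paper leaves implicit.
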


\begin{figure}
\begin{center}
\begin{tikzpicture}[scale=1, fill=gray!30, vertex/.style={circle,inner sep=2,fill=black,draw}]

\filldraw [thick] plot [smooth cycle] coordinates {(0,0.5) (0,2) (0.5,2.25) (1.5,2.25) (2,2) (2,0.5) (1.5,0.25) (0.5,0.25)};
\node at (1,0.75) [label=north:$2|A|-2$]{};

\filldraw [xshift=2.5cm, thick] plot [smooth cycle] coordinates {(0,0.5) (0,2) (0.5,2.25) (1.5,2.25) (2,2) (2,0.5) (1.5,0.25) (0.5,0.25)};
\node at (3.5,0.75) [label=north:$2|B|-2$]{};

\draw (2,2) -- (2.5,2);
\draw (2.05,1.5) -- (2.45,1.5);
\draw (2.05,1) -- (2.47,0.75);
\draw (2,0.5) -- (2.47,0.75);

\node at (2,2) [vertex]{};
\node at (2.05,1.5) [vertex]{};
\node at (2.05,1) [vertex]{};
\node at (2,0.5) [vertex]{};

\node at (2.5,2) [vertex]{};
\node at (2.45,1.5) [vertex]{};
\node at (2.47,0.75) [vertex]{};

\draw[ultra thick,<->](5,1.25) -- (5.5,1.25);


\filldraw [xshift=6cm, thick] plot [smooth cycle] coordinates {(0,0.5) (0,2) (0.5,2.25) (1.5,2.25) (2,2) (2,0.5) (1.5,0.25) (0.5,0.25)};
\node at (7,0.75) [label=north:$2|A|-2$]{};

\draw (8,2) -- (8.5,1.25);
\draw (8.05,1.5) -- (8.5,1.25);
\draw (8.05,1) -- (8.5,1.25);
\draw (8,0.5) -- (8.5,1.25);

\node at (8,2) [vertex]{};
\node at (8.05,1.5) [vertex]{};
\node at (8.05,1) [vertex]{};
\node at (8,0.5) [vertex]{};
\node at (8.5,1.25) [vertex]{};


\filldraw [xshift=10cm, thick] plot [smooth cycle] coordinates {(0,0.5) (0,2) (0.5,2.25) (1.5,2.25) (2,2) (2,0.5) (1.5,0.25) (0.5,0.25)};
\node at (11,0.75) [label=north:$2|B|-2$]{};

\node at (10,2) [vertex]{};
\node at (9.95,1.5) [vertex]{};
\node at (9.97,0.75) [vertex]{};

\draw (9,1) -- (9.5,1) -- (9.5,1.5) -- (9,1.5) -- cycle;

\draw (10,2) -- (9.5,1.5);
\draw (9,1.5) -- (9.5,1) -- (9.97,0.75);

\draw[bend left=20] (9.97,0.75) edge (9,1);

\draw (9,1) -- (9.5,1.5) -- (9.95,1.5);

\node at (9,1) [vertex]{};
\node at (9.5,1) [vertex]{};

\node at (9,1.5) [vertex]{};
\node at (9.5,1.5) [vertex]{};

\end{tikzpicture}
\end{center}
\caption{Illustration of a sum move for Case (5).}
\label{fig:sum5}
\end{figure}
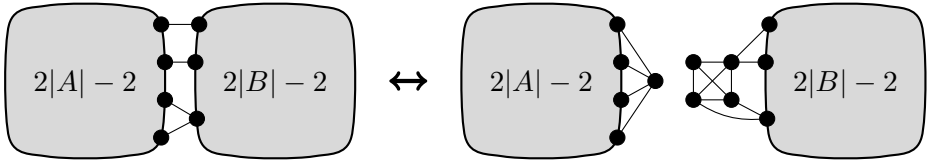

\begin{proof}
(1) Let $X\subseteq A$ and $Y\subseteq B$. 
Then $i(X)\leq 2|X|-2$ and $i(Y)\leq 2|Y|-2$ with equality if and only if $X=A$ and $Y=B$. Thus $i(X\cup Y) \leq 2|X\cup Y|-4+d(X,Y)$. We have that $i(X\cup Y)=2|X\cup Y|$ if and only if $i(X)=2|X|-2$, $i(Y)=2|Y|-2$ and $d(X,Y)=4$. This holds if and only if $X=A$ and $Y=B$ and hence $X\cup Y=V$. (Note that, if $d(X,Y)< 4$ then $i(X\cup Y)<2|X\cup Y|$.)

(2) Suppose $G$ is a $(2,1)$-circuit with no proper critical sets. 

First we consider the case where there are exactly three distinct $x$'s in $A$.  Then for any $X\subseteq A$ we have $i(X)\leq 2|X|-2$. This implies that $i(X\cup \{a_A,b_A,c_A,d_A\})\leq 2|X\cup \{a_A,b_A,c_A,d_A\}|$ with equality if and only if $X$ contains all the $x$'s in $\E$ and hence $X=A$.  Hence, $G_A$ is a $(2,1)$-circuit. Suppose that $G_A$ contains a proper critical set. Then this critical set must either be of the form $X\cup \{a_A,b_A,c_A,d_A\}$ where $X \subsetneq A$ or of the form $X\cup F_A$ where $|F_A|=2$ and $X \subseteq A$.
If the critical set is of the form $X\cup \{a_A,b_A,c_A,d_A\}$ where $X \subsetneq A$,
then $i(X\cup B)=2|X\cup B|-1$ and $|X\cup B|<|V|-1$ (since $|X|<|A|-1$), contradicting the fact that $G$ does not contain any proper critical sets. 
If $|F_A|=2$, then $A\cup F_A$ is a proper critical set.
The case where there are exactly three distinct $y$'s in $B$ is identical.

Next we consider the case where there are four distinct $x$'s in $A$. Then for any $X\subseteq A$ we have $i(X)\leq 2|X|-2$. This implies that $i(X\cup \{a_A\})\leq 2|X\cup \{a_A\}|$ with equality if and only if $X$ contains all the $x$'s in $\E$ and hence $X=A$. Suppose that $G_A$ contains a proper critical set. Then this critical set must be of the form $X\cup \{a_A\}$ where $X \subsetneq A$ and either: $X$ contains exactly three of the $x's$, in which case $i(X)=2|X|-2$; or $X$ contains all four of the $x's$, in which case $i(X)=2|X|-3$. In either case $i(X\cup B)=2|X\cup B|-1$, contradicting the fact that $G$ does not contain any proper critical sets. The case where there are four distinct $y$'s in $B$ is identical.
\end{proof}

\section{A recursive construction}
\label{sec:construction}

We need one final elementary lemma.

\begin{lem}\label{lem:forwardh2x}
Let $G$ be a $(2,1)$-circuit and let $G'$ be formed from $G$ by a 1-extension or an $X$-replacement. Then $G'$ is a $(2,1)$-circuit.
\end{lem}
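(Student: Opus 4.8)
The plan is to verify the two defining properties of a $(2,1)$-circuit for $G'$ directly from those of $G$, treating the 1-extension and $X$-replacement cases separately but by the same counting template. Recall $G'$ must satisfy $f(G')=0$ and $f(H)\geq 1$ for every proper subgraph $H\subsetneq G'$, where $f(H)=2|V(H)|-|E(H)|$.

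First I would check the count $f(G')=0$. A 1-extension removes one edge and adds one vertex together with three edges, so $|V(G')|=|V(G)|+1$ and $|E(G')|=|E(G)|+2$; hence $f(G')=f(G)=0$. An $X$-replacement removes two edges and adds one vertex with four edges, so $|V(G')|=|V(G)|+1$ and $|E(G')|=|E(G)|+2$, giving $f(G')=f(G)=0$ again. So in both cases $G'$ has the correct global count.

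The substantive part is showing every proper subgraph $H$ of $G'$ satisfies $f(H)\geq 1$. Let $v$ denote the new vertex. If $v\notin V(H)$, then $H$ is (isomorphic to) a subgraph of $G-\{xy\}$ (resp. $G-\{xy,zw\}$), hence a proper subgraph of $G$ unless $H=G$ minus those edges; either way $H$ is a subgraph of $G$ with the same or more slack, so $f(H)\geq f(\text{corresponding proper subgraph of }G)\geq 1$ — one must be slightly careful that $H$ could equal $G$ only if no deleted edge lies in $H$, but then $H\subsetneq G$ already since $G'$ has a vertex $v$ not in $H$, wait — the cleanest statement is: any $H$ with $v\notin V(H)$ is a subgraph of $G$ on at most $|V(G)|$ vertices, and since $|V(H)|\le |V(G')|-1=|V(G)|$ it is contained in a subgraph of $G$, and if $H$ is all of $G$ this cannot happen as $v\notin V(H)$; so $H$ is a proper subgraph of $G$ and $f(H)\ge 1$. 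If $v\in V(H)$ and $d_H(v)\le 1$, then $f(H-v)=f(H)-2+d_H(v)\le f(H)-1$, and $H-v$ is a (not necessarily proper) subgraph of $G$; combined with $f(G)=0$ and the subgraph bound this forces $f(H)\ge 1$ — here one wants $d_H(v)\le 1$ to be handled, but the genuinely delicate range is $d_H(v)\ge 2$. In that case I would form a subgraph $H^-$ of $G$ by deleting $v$ from $H$ and re-adding an appropriate subset of the original edges $xy$ (resp. $xy$ and/or $zw$) that $v$ "covers": if $v$ has both $x$ and $y$ as neighbours in $H$, add back $xy$; similarly for $zw$ in the $X$-replacement case. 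Then $|V(H^-)|=|V(H)|-1$ and $|E(H^-)|\ge |E(H)|-d_H(v)+(\text{number of original edges added back})$, and the point of the construction is that the number of original edges recovered is at least $d_H(v)-2$, so that $f(H^-)\le f(H)-2+d_H(v)-(d_H(v)-2)=f(H)$, hmm — I should instead aim for $f(H^-)\le f(H)$, giving $1\le f(H^-)$ hence... actually I want $f(H)\ge 1$, so I need $f(H^-)\le f(H)-1$ is too strong; rather: if $H^-$ is a proper subgraph of $G$ then $f(H^-)\ge 1$ and I need $f(H)\ge f(H^-)\ge 1$, i.e. $f(H)\ge f(H^-)$, equivalently $|E(H)|-|V(H)| \le |E(H^-)|-|V(H^-)|$ rearranged; checking the arithmetic case by case on which of $\{x,y,z,w\}$ lie in $H$ and with which multiplicity is exactly the routine-but-real work. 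The one subtlety: $H^-$ might equal $G$ itself (when $H$ is spanning and recovers all deleted edges), in which case $f(H^-)=f(G)=0$; but then a direct check shows $H=G'$, contradicting that $H$ is proper — so this boundary case is excluded.

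The main obstacle I anticipate is precisely the bookkeeping in the $d_H(v)\ge 2$ case of the $X$-replacement: one must enumerate how many of the four neighbours $x,y,z,w$ of $v$ lie in $V(H)$, whether $xy$ or $zw$ (which are non-adjacent in $G$, a fact one should use to guarantee that adding them back keeps $H^-$ a subgraph of $G$ and does not create multi-edges relative to $G$) get recovered, and to confirm in every subcase that the inequality $f(H)\ge f(H^-)$ holds with $H^-\subseteq G$, handling separately the possibility $H^-=G$. For the 1-extension this is the standard Henneberg-move argument (essentially \cite{B&J}), so I would present that briefly and spend the care on the $X$-replacement, or alternatively cite that $X$-replacement preserves independence/circuits in count matroids; but since the statement is about circuits specifically (not independent sets), I would give the short self-contained argument above.
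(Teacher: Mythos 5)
Your argument is correct and is exactly the standard counting proof the paper has in mind (the paper states this lemma without proof, as "elementary"): preserve the global count $f(G')=f(G)=0$, reduce subgraphs avoiding the new vertex $v$ to proper subgraphs of $G$, and for subgraphs containing $v$ delete $v$ and restore whichever of the deleted edges $xy$ (and $zw$) are covered, checking that the slack does not drop and that the boundary case $H^-=G$ forces $H=G'$. One micro-slip worth fixing in the write-up: in the $v\notin V(H)$ case, the reason $H\neq G$ is not that $v\notin V(H)$ (we have $v\notin V(G)$ anyway) but that $H$ necessarily omits the deleted edge $xy$, so it is a proper subgraph of $G$ and $f(H)\geq 1$ follows.
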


Now we can prove our main result. In a similar manner to that adopted in \cite{B&J} and \cite{Nix} we refer to applications of Lemmas \ref{lem:sum1} to \ref{lem:sum5a} to combine two $(2,1)$ circuits as taking sums of connected components.
\begin{thm1.1}
A simple graph $G$ is a $(2,1)$-circuit if and only if it can be generated recursively from $H\in \G$ by applying 1-extensions and $X$-replacements sequentially within connected components and taking sums of connected components.
\end{thm1.1}

\begin{proof}
($\Leftarrow$) Suppose $G$ is recursively generated by applying 1-extensions and $X$-replacements sequentially within connected components and taking sums of connected components. Then Lemmas \ref{lem:forwardh2x}, \ref{lem:sum1}, \ref{lem:sum2a}, \ref{lem:sum2b}, \ref{lem:sum3}, \ref{lem:sum4} and \ref{lem:sum5a} together imply that $G$ is a $(2,1)$-circuit.

($\Rightarrow$) Suppose that $G \in M(2,1).$ 
\begin{itemize}
\item If $G$ is essentially 5-edge-connected and $\delta(G)=4$, then Theorem \ref{thm:4reg} implies that there exists an admissible vertex (an inverse X-replacement can be performed at this vertex).
\item If $G$ is 3-connected, essentially 5-edge-connected, contains no proper critical sets and $\delta(G)=3$, then Theorem \ref{thm:Hen2admissible} (i) (restricted to $M(2,1)$) implies that there exists an admissible node (a 1-reduction can be performed at this node).
\item If $G$ is $3$-connected, essentially 4-edge-connected, contains a proper critical set and $\delta(G)=3$, then Theorem \ref{thm:Hen2admissible} (ii) (restricted to $M(2,1)$) implies that there exists an admissible node (a 1-reduction can be performed at this node).
\end{itemize}
The cases left to consider are:
\begin{enumerate}
\item $\delta(G)=4$ and  $G$ is not essentially 5-edge-connected; or
\item $\delta(G)=3$ and either:
\begin{enumerate}
\item $G$ is 3-connected and contains non-trivial 3-edge-cutsets;
\item $G$ is 3-connected, contains no proper critical sets and is not essentially 5-edge-connected. 
\item $G$ is not 3-connected.
\end{enumerate}
\end{enumerate}
We illustrate how these cases interact during the following argument in Figure \ref{fig:flow-chart}.

\medskip

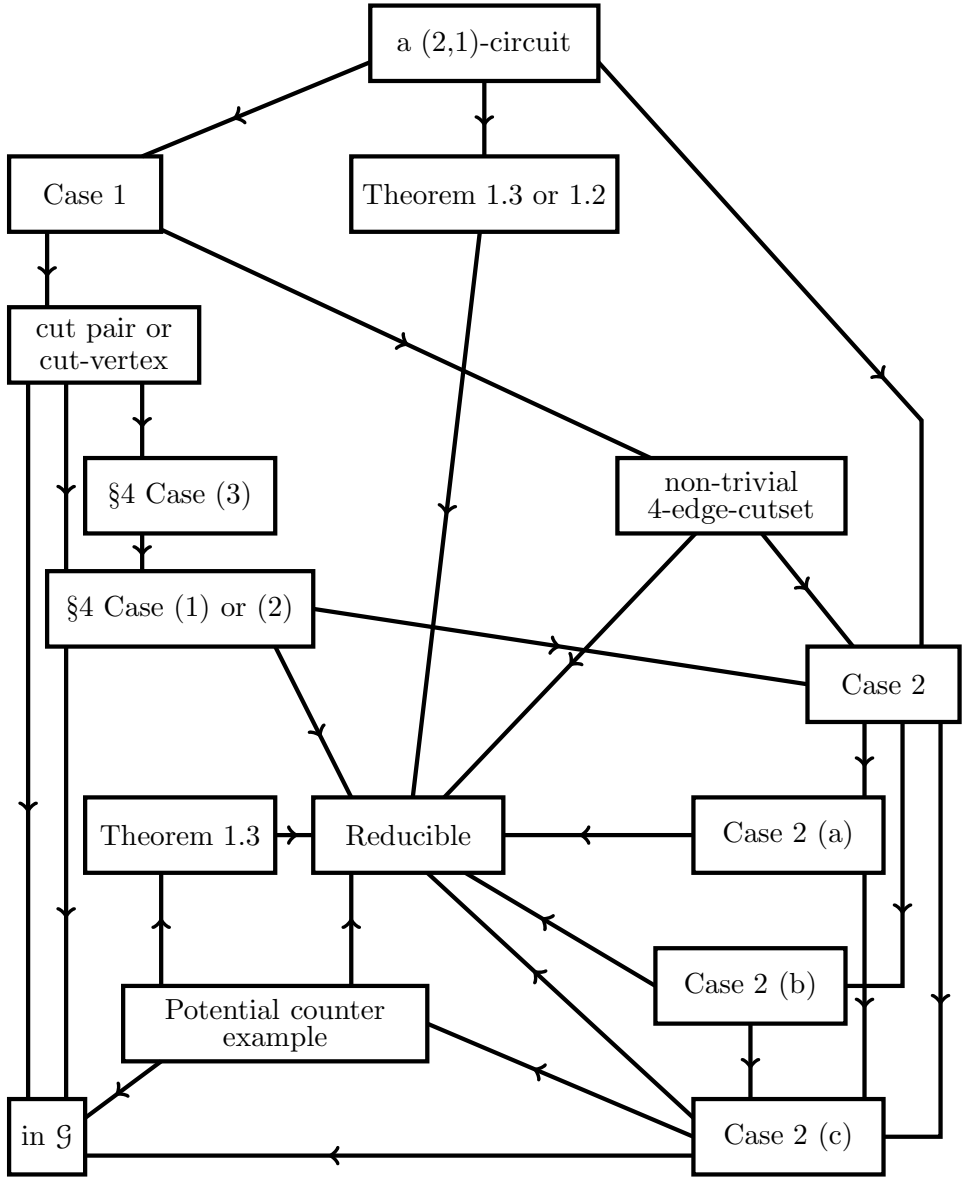
\begin{figure}
\begin{center}
\begin{tikzpicture}[scale=1, vertex/.style={circle,inner sep=2,fill=black,draw}, vertex2/.style={circle,inner sep=4,fill=black,draw}]

\draw [ultra thick, ->-=.5] (6.25,13) to (5.25,4.5);
\draw [ultra thick, ->-=.5] (1,13) to (9.5,9);
\draw [ultra thick, ->-=.5] (9.5,9) to (5.25,4.5);
\draw [ultra thick, ->-=.5] (9.5,9) to (11.5,6.5);

\draw[ultra thick] (4.75,14.5) -- (7.75,14.5) -- (7.75,15.5) -- (4.75,15.5) -- cycle;
\node at (6.25,15) {a (2,1)-circuit};


\draw[ultra thick,fill=white] (4.5,12.5) -- (8,12.5) -- (8,13.5) -- (4.5,13.5) -- cycle;
\node at (6.25,13) {Theorem \ref{thm:Hen2admissible} or \ref{thm:4reg}};


\draw[ultra thick, fill=white] (0,12.5) -- (2,12.5) -- (2,13.5) -- (0,13.5) -- cycle;
\node at (1,13) {Case 1};


\draw[ultra thick] (0,10.5) -- (2.5,10.5) -- (2.5,11.5) -- (0,11.5) -- cycle;
\node at (1.25,11.2) {cut pair or};
\node at (1.25,10.8) {cut-vertex};




\draw[ultra thick] (1,8.5) -- (3.5,8.5) -- (3.5,9.5) -- (1,9.5) -- cycle;
\node at (2.25,9) {\textsection 4 Case (3)};


\draw[ultra thick] (0.5,7) -- (4,7) -- (4,8) -- (0.5,8) -- cycle;
\node at (2.25,7.5) {\textsection 4 Case (1) or (2) };


\draw[ultra thick, fill=white] (4,4) -- (6.5,4) -- (6.5,5) -- (4,5) -- cycle;
\node at (5.25,4.5) {Reducible};




\draw[ultra thick, fill=white] (8,8.5) -- (11,8.5) -- (11,9.5) -- (8,9.5) -- cycle;
\node at (9.5,9.2) {non-trivial};
\node at (9.5,8.8) {4-edge-cutset};


\draw[ultra thick, fill=white] (10.5,6) -- (12.5,6) -- (12.5,7) -- (10.5,7) -- cycle;
\node at (11.5,6.5) {Case 2};


\draw[ultra thick] (9,4) -- (11.5,4) -- (11.5,5) -- (9,5) -- cycle;
\node at (10.25,4.5) {Case 2 (a)};


\draw[ultra thick] (8.5,2) -- (11,2) -- (11,3) -- (8.5,3) -- cycle;
\node at (9.75,2.5) {Case 2 (b)};


\draw[ultra thick] (9,0) -- (11.5,0) -- (11.5,1) -- (9,1) -- cycle;
\node at (10.25,0.5) {Case 2 (c)};


\draw[ultra thick] (1.5,1.5) -- (5.5,1.5) -- (5.5,2.5) -- (1.5,2.5) -- cycle;
\node at (3.5,2.2) {Potential counter};
\node at (3.5,1.8) {example};


\draw[ultra thick] (0,0) -- (1,0) -- (1,1) -- (0,1) -- cycle;
\node at (0.5,0.5) {in $\mathcal{G}$};


\draw[ultra thick] (1,4) -- (3.5,4) -- (3.5,5) -- (1,5) -- cycle;
\node at (2.25,4.5) {Theorem \ref{thm:Hen2admissible}};


\draw [ultra thick, ->-=.6] (6.25,14.5) to (6.25,13.5);

\draw [ultra thick, ->-=.6] (4.75,14.75) to (1.75,13.5);
\draw [ultra thick, ->-=.6] (7.75,14.75) to (12,10) to (12,7);

\draw [ultra thick, ->-=.6] (0.5,12.5) to (0.5,11.5);

\draw [ultra thick, ->-=.6] (0.25,10.5) to (0.25,1);
\draw [ultra thick, ->-=.6] (1.75,10.5) to (1.75,9.5);
\draw [ultra thick, ->-=.6] (1.75,8.5) to (1.75,8);
\draw [ultra thick, ->-=.6] (3.5,7) to (4.5,5);




\draw [ultra thick, ->-=.5] (4,7.5) to (10.5,6.5);

\draw [ultra thick, ->-=.6] (0.75,7) to (0.75,1);

\draw [ultra thick, ->-=.6] (0.75,10.5) to (0.75,8);

\draw [ultra thick, ->-=.6] (11.25,6) to (11.25,5);
\draw [ultra thick, ->-=.6] (11.75,6) to (11.75,2.5) to (11,2.5);
\draw [ultra thick, ->-=.6] (12.25,6) to (12.25,0.5) to (11.5,0.5);

\draw [ultra thick, ->-=.6] (11.25,4) to (11.25,1);
\draw [ultra thick, ->-=.6] (9.75,2) to (9.75,1);

\draw [ultra thick, ->-=.6] (9,4.5) to (6.5,4.5);
\draw [ultra thick, ->-=.6] (8.5,2.5) to (6,4);

\draw [ultra thick, ->-=.6] (9,0.75) to (5.5,4);
\draw [ultra thick, ->-=.6] (9,0.5) to (5.5,2);
\draw [ultra thick, ->-=.6] (9,0.25) to (1,0.25);

\draw [ultra thick, ->-=.6] (2,2.5) to (2,4);
\draw [ultra thick, ->-=.6] (4.5,2.5) to (4.5,4);
\draw [ultra thick, ->-=.6] (3.5,4.5) to (4,4.5);

\draw [ultra thick, ->-=.6] (2,1.5) to (1,0.75);

\end{tikzpicture}
\end{center}
\caption{The interaction between cases in the proof of Theorem \ref{thm:mainresult} .}
\label{fig:flow-chart}
\end{figure}

\medskip

\noindent\textbf{Case 1: $\mathbf{\delta(G)=4}$ and $\mathbf{G}$ is not essentially 5-edge-connected.}

First, suppose that $G$ contains a cut-vertex or a cut-pair. Then $G$ satisfies either (1), (2a), (2b) or (3) in the list at the start of Section \ref{sec:sumsec}. If $G$ satisfies (1), then we can apply Lemma \ref{lem:sum1} and reduce to two new (2,1)-circuits both with minimum degree three. If $G$ satisfies (2a), then we can apply Lemma \ref{lem:sum2a} to reduce to two new (2,1)-circuits unless one (or both) of $A$ and $B$ are isomorphic to the graph $T_1$ (the cut pair in $G$ correspond to the degree two vertices in $T_1$) shown in Figure \ref{fig:K4withwings}. (If both of $A$ and $B$ are isomorphic to $T_1$ we can apply two inverse X-replacements to achieve the graph $S_1\in\G$, see Figure \ref{fig:S234}.)

Hence, any potential counterexample arising from this case has no cut-vertices and any cut pairs, $\{x,y\}$ say, satisfy: $xy$ is not an edge and at least one of the `sides' of the cut-pair is isomorphic to $T_1$. Then, by Lemma \ref{lem:invXtoT1}, we can perform an inverse X-replacement at $x$. 
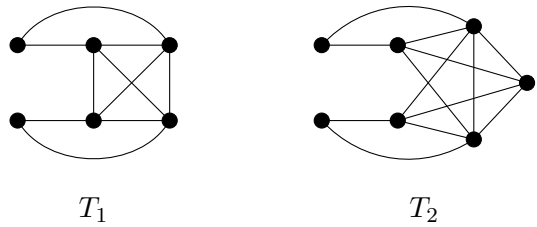
\begin{figure}
\begin{center}
\begin{tikzpicture}[scale=1, vertex/.style={circle,inner sep=2,fill=black,draw}, vertex2/.style={circle,inner sep=4,fill=black,draw}]

\coordinate (v1) at (0,0);
\coordinate (v2) at (1,0);
\coordinate (v3) at (2,0);
\coordinate (v4) at (0,1);
\coordinate (v5) at (1,1);
\coordinate (v6) at (2,1);

\node at (v1) [vertex]{};
\node at (v2) [vertex]{};
\node at (v3) [vertex]{};
\node at (v4) [vertex]{};
\node at (v5) [vertex]{};
\node at (v6) [vertex]{};

\draw (v1) -- (v3);
\draw (v4) -- (v6);
\draw (v2) -- (v5);
\draw (v2) -- (v6);
\draw (v3) -- (v5);
\draw (v3) -- (v6);

\draw[bend right=60] (v1) edge (v3);

\draw[bend left=60] (v4) edge (v6);

\node at (1,-0.75)[label=south:$T_1$]{};

\coordinate (v7) at (4,0);
\coordinate (v8) at (5,0);
\coordinate (v9) at (6,-0.25);
\coordinate (v10) at (6.7,0.5);
\coordinate (v11) at (4,1);
\coordinate (v12) at (5,1);
\coordinate (v13) at (6,1.25);

\node at (v7) [vertex]{};
\node at (v8) [vertex]{};
\node at (v9) [vertex]{};
\node at (v10) [vertex]{};
\node at (v11) [vertex]{};
\node at (v12) [vertex]{};
\node at (v13) [vertex]{};

\draw (v7) -- (v8);
\draw (v8) -- (v9);
\draw (v9) -- (v10);
\draw (v11) -- (v12);
\draw (v12) -- (v13);
\draw (v13) -- (v10);
\draw (v12) -- (v10);
\draw (v8) -- (v10);
\draw (v13) -- (v8);
\draw (v13) -- (v9);
\draw (v12) -- (v9);

\draw[bend right=40] (v7) edge (v9);

\draw[bend left=40] (v11) edge (v13);

\node at (5.35,-0.75)[label=south:$T_2$]{};

\end{tikzpicture}
\end{center}
\caption{The graphs $T_1$ and $T_2$.}
\label{fig:K4withwings}
\end{figure}

If $G$ satisfies (2b), then we can apply Lemma \ref{lem:sum2b} to reduce to two new (2,1)-circuits (one has strictly fewer vertices than $G$ and the other has minimum degree 3). If $G$ satisfies (3), then we can apply Lemma \ref{lem:sum3} to obtain two new $(2,1)$-circuits (these two circuits do not contain the edge $xy$, so we have moved to $(2,1)$-circuits satisfying (2a) or (2b)). 


Finally, suppose that $G$ is 3-connected and contains a non-trivial 4-edge-cutset, but no non-trivial 3-edge-cutset. 
\begin{claim}
\label{claim:noproper}
$G$ contains no proper critical sets.
\end{claim}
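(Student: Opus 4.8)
The plan is to show that a proper critical set would force either a cut-vertex or a $2$-vertex-cut in $G$, contradicting $3$-connectivity.

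First I would record what a critical set forces on the edge count. Since $\delta(G)=4$, Lemma~\ref{lem:4reg} tells us that $G$ is connected and $4$-regular, so for every $X\subseteq V$,
\[
4|X| \;=\; \sum_{v\in X} d_G(v) \;=\; 2\,i(X) + d(X,V-X).
\]
Thus if $X$ is critical, so that $i(X)=2|X|-1$, then $d(X,V-X)=2$. By the same identity (equivalently, since $i(V-X)=|E|-i(X)-d(X,V-X)=2|V|-2|X|-1$) the complement $V-X$ is also critical, and it is proper exactly when $X$ is. Moreover $|X|\ge 2$, since neither $\emptyset$ nor a single vertex is critical, and $|V|\ge 5$, since the smallest simple $(2,1)$-circuit is $K_5$; so, after interchanging $X$ with $V-X$ if necessary, I may assume $|X|\ge 3$ while still $|V-X|\ge 2$.

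Now I would use the resulting $2$-edge-cut. Let $u_1v_1$ and $u_2v_2$ be the two edges between $X$ and $V-X$, with $u_i\in X$ and $v_i\in V-X$. If $u_1=u_2$, then deleting this vertex leaves $X-\{u_1\}$ (nonempty as $|X|\ge 3$) with no edge to the nonempty set $V-X$, so $u_1$ is a cut-vertex, contradicting $3$-connectivity. If $u_1\ne u_2$, then deleting $\{u_1,u_2\}$ separates $X-\{u_1,u_2\}$ (nonempty as $|X|\ge 3$) from $V-X$, a $2$-vertex-cut, again contradicting $3$-connectivity. Hence $G$ has no proper critical set.

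I do not expect a genuine obstacle: the argument is entirely elementary once the $4$-regular edge count is in hand. The only point requiring a little care is the reduction to the case $|X|\ge 3$ (needed so that $X-\{u_1,u_2\}$ is nonempty), which uses both the symmetry between $X$ and its complement and the non-existence of simple $(2,1)$-circuits on at most four vertices. I would also remark that the claim does not in fact need the hypotheses about $3$- and $4$-edge-cutsets — $3$-connectivity together with $4$-regularity suffices — these merely describe the subcase in which the claim is applied.
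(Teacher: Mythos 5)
Your proof is correct and follows essentially the same route as the paper's: the $4$-regularity count shows a proper critical set has exactly two edges to its complement, and this $2$-edge-cut yields a cut-vertex or cut-pair, contradicting $3$-connectivity. You merely make explicit the non-triviality of the resulting separation (via passing to the complement if needed), a detail the paper leaves implicit.
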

\begin{proof}[Proof of Claim]
As $\delta(G)=4$, we have that $G$ is 4-regular. Suppose that $G$ did contain a proper critical set $S$, then 
$$\sum_{v\in S}d_{G[S]}(v)=4|S|-2.$$
However, as $G$ is 4-regular this means there must be either a cut-vertex or a cut-pair in $G$, contradicting the fact that $G$ is 3-connected.
\end{proof}
Thus we can apply Lemma \ref{lem:sum5a} to achieve two new (2,1)-circuits one with strictly fewer vertices than $G$ and we can apply the lemma in such a way that the other has minimum degree three.

\noindent\textbf{Case 2a: $\mathbf{\delta(G)=3}$, $\mathbf{G}$ is 3-connected and contains non-trivial 3-edge-cutsets.}

Suppose that $G$ contains a non-trivial 3-edge-cutset. Hence $G$ contains a proper critical set. Thus we can apply Lemma \ref{lem:sum4} to form two new (2,1)-circuits. When we apply Lemma \ref{lem:sum4} we choose to do so such that the resulting circuits are strictly smaller and (by setting $r_1=r_2$) not 3-connected respectively. 

\medskip

\noindent\textbf{Case 2b: $\mathbf{\delta(G)=3}$, $\mathbf{G}$ is 3-connected, contains no proper critical sets and is not essentially 5-edge-connected.}


As $G$ does not contain a proper critical set it does not contain a non-trivial 3-edge cutset.
Hence $G$ contains a non-trivial 4-edge cutset and we can apply Lemma \ref{lem:sum5a} to form two new (2,1)-circuits. We can apply Lemma \ref{lem:sum5a} so that either the resulting circuits have strictly fewer vertices than $G$ or, by the freedom in the choice of edges when applying Lemma \ref{lem:sum5a}, we may assume that they are not 3-connected.

\medskip

\noindent\textbf{Case 2c: $\mathbf{\delta(G)=3}$ and $\mathbf{G}$ is not 3-connected.}

Suppose that there exists a cut-vertex in $G$, then we can apply Lemma \ref{lem:sum1} and reduce to two smaller (2,1)-circuits unless one (or both) of the sides of the cut are isomorphic to $K_5-e$. If both sides are isomorphic to $K_5-e$, we can, if necessary, reapply Lemma \ref{lem:sum1} until we obtain one graph,  $S_5\in\G$ (see Figure \ref{fig:base9}).

Now, suppose that there exists a cut-pair in $G$. Then $G$ satisfies (2a), (2b) or (3). If $G$ contains a cut-pair that satisfies:
\begin{itemize}
\item (2a), then we can apply Lemma \ref{lem:sum2a} to obtain two new smaller $(2,1)$-circuits, unless one side of the cut-pair is isomorphic to $T_1$ (if both sides were isomorphic to $T_1$, we would have $\delta(G)=4$);
\item  (2b), then we can apply Lemma \ref{lem:sum2b} to obtain two new $(2,1)$-circuits, either these circuits are both smaller, or one of the sides of the cut-pair is isomorphic to $T_2$, or a resulting circuit is larger than the original circuit (in the last two cases we will show that either an alternative reduction is possible or the circuit is in $\G$); or
\item (3), then we can apply Lemma \ref{lem:sum3} to obtain two new $(2,1)$-circuits, the edge $xy$ occurs in $G$ but in neither of the resulting circuits. Either the new $(2,1)$-circuits are both smaller or the resulting circuits each satisfy one of case (2a) or (2b).
\end{itemize}

Assume $G$ is a counter-example to the theorem (we will show that no such $G$ exists), then $G$ has:
\begin{itemize}
\item[(I)] every cut-vertex has one side isomorphic to $K_5-e$; and
\item[(II)] every cut-pair $\{x,y\}$ satisfies $xy\not\in E$ and has one side isomorphic to $T_1$ or $T_2$.
\end{itemize}
Associate the following multigraph $G^*$ with $G$: if (I) occurs in $G$, replace the $K_5-e$ with a loop at the cut-vertex; if (II) occurs in $G$, replace any occurrences of $T_1$ with a double edge between the vertices of the cut-pair, and any occurrences of $T_2$ with a triple edge between the vertices of the cut-pair. Note that:
\begin{itemize}
\item all the vertices in $G^\ast$ incident with a multiple edge have degree greater than three and that $G^\ast$ is 3-connected (otherwise we could have reduced $G$ using one of Lemmas \ref{lem:sum1}, \ref{lem:sum2a}, \ref{lem:sum2b} or \ref{lem:sum3});
\item if $G^\ast$ contains a loop, then the vertex incident with the loop has degree greater than three and it is not incident with any multiple edges (else both sides of the 2-cut are not isomorphic to $T_1$ or $T_2$); 
\item if $G^\ast=(V,E)$ contains a vertex, $v$ say, incident to two loops then $V=\{v\}$.
\item if there are any triple edges in $G^\ast$, then they are incident to vertices of degree greater than or equal to five, and hence, $G^\ast$ must contain vertices of degree three.
\end{itemize}
Hence $G^*\in\mathcal{M}$. See Figure \ref{fig:G_to_G*} for an illustration of the construction of $G^*$. 

\begin{figure}
\begin{center}
\begin{tikzpicture}[scale=0.8, fill=gray!30, vertex/.style={circle,inner sep=2,fill=black,draw}, vertex2/.style={circle,inner sep=0.75,fill=black,draw}]

\coordinate (v1) at (3,1);
\coordinate (v2) at (2,2);
\coordinate (v3) at (1.5,2.5);
\coordinate (v4) at (0.5,3.5);
\coordinate (v5) at (0.5,4);
\coordinate (v6) at (1.5,5);
\coordinate (v7) at (2,5.25);
\coordinate (v8) at (3.5,5.25);
\coordinate (v10) at (5.5,5.25);
\coordinate (v11) at (6,5);
\coordinate (v12) at (7,4);
\coordinate (v13) at (7,3.5);
\coordinate (v14) at (6,2.5);
\coordinate (v15) at (5.5,2);
\coordinate (v16) at (4.5,1);


\coordinate (a1) at (2.5,1);
\coordinate (a2) at (2,0.5);
\coordinate (a3) at (1.5,0.5);
\coordinate (a4) at (1.5,1);
\coordinate (a5) at (2,1.5);

\coordinate (b1) at (1,2.5);
\coordinate (b2) at (0.5,2);
\coordinate (b3) at (0,2);
\coordinate (b4) at (0,2.5);
\coordinate (b5) at (0.5,3);

\coordinate (c1) at (0.5,4.5);
\coordinate (c2) at (0,5);
\coordinate (c3) at (0,5.5);
\coordinate (c4) at (0.5,5.5);
\coordinate (c5) at (1,5);

\coordinate (d1) at (1.5,6);
\coordinate (d2) at (1.75,6.5);
\coordinate (d3) at (2.25,6.5);
\coordinate (d4) at (2.5,6);

\coordinate (e1) at (3,6);
\coordinate (e2) at (3.25,6.5);
\coordinate (e3) at (3.75,6.5);
\coordinate (e4) at (4,6);

\coordinate (f1) at (5,6);
\coordinate (f2) at (5.25,6.5);
\coordinate (f3) at (5.75,6.5);
\coordinate (f4) at (6,6);

\coordinate (g1) at (6.75,5);
\coordinate (g2) at (7,5.25);
\coordinate (g3) at (7.25,5);
\coordinate (g4) at (7,4.75);

\coordinate (h1) at (7,2.75);
\coordinate (h2) at (7.25,2.5);
\coordinate (h3) at (7,2.25);
\coordinate (h4) at (6.75,2.5);

\coordinate (i1) at (5.5,1.25);
\coordinate (i2) at (5.75,1);
\coordinate (i3) at (5.5,0.75);
\coordinate (i4) at (5.25,1);


\filldraw [ultra thick] plot [smooth cycle] coordinates {(v1) (2.7,1.7) (v2) (v3) (1.2,3.2) (v4) (v5) (1.2,4.2) (v6) (v7) (v8)  (v10) (v11) (6.3,4.3) (v12) (v13) (6.3,3.2) (v14) (v15) (4.8,1.7) (v16)};

\draw (a1) -- (a4) -- (a2) -- (a5) -- (a3) -- cycle;
\draw (b1) -- (b4) -- (b2) -- (b5) -- (b3) -- cycle;
\draw (c1) -- (c4) -- (c2) -- (c5) -- (c3) -- cycle;
\draw (d1) -- (d4) -- (d2) -- (v7) -- (d3) -- cycle;
\draw (e1) -- (e4) -- (e2) -- (v8) -- (e3) -- cycle;
\draw (f1) -- (f4) -- (f2) -- (v10) -- (f3) -- cycle;

\draw (a2) -- (v1) -- (a1) -- (a2) -- (a3) -- (a4) -- (a5) -- (v2) -- (a4);
\draw (b2) -- (v3) -- (b1) -- (b2) -- (b3) -- (b4) -- (b5) -- (v4) -- (b4);
\draw (c2) -- (v5) -- (c1) -- (c2) -- (c3) -- (c4) -- (c5) -- (v6) -- (c4);

\draw (v7) -- (d1) -- (d2);
\draw (d3) -- (d4) -- (v7);
\draw (v8) -- (e1) -- (e2);
\draw (e3) -- (e4) -- (v8);
\draw (v10) -- (f1) -- (f2);
\draw (f3) -- (f4) -- (v10);

\draw (g1) -- (g2) -- (g3) -- (g4) -- cycle;
\draw (h1) -- (h2) -- (h3) -- (h4) -- cycle;
\draw (i1) -- (i2) -- (i3) -- (i4) -- cycle;

\draw (v11) -- (g1) -- (g3) -- (v12) -- (g4) -- (g2) -- cycle;
\draw (v13) -- (h1) -- (h3) -- (v14) -- (h4) -- (h2) -- cycle;
\draw (v15) -- (i1) -- (i3) -- (v16) -- (i4) -- (i2) -- cycle;

\node at (7.7,4) [vertex2]{};
\node at (7.75,3.75) [vertex2]{};
\node at (7.7,3.5) [vertex2]{};

\node at (7.7,4) [vertex2]{};
\node at (7.75,3.75) [vertex2]{};
\node at (7.7,3.5) [vertex2]{};

\node at (6.4,1.9) [vertex2]{};
\node at (6.25,1.75) [vertex2]{};
\node at (6.1,1.6) [vertex2]{};

\node at (6.4,1.9) [vertex2]{};
\node at (6.25,1.75) [vertex2]{};
\node at (6.1,1.6) [vertex2]{};

\node at (1.1,1.9) [vertex2]{};
\node at (1.25,1.75) [vertex2]{};
\node at (1.4,1.6) [vertex2]{};

\node at (4.3,6) [vertex2]{};
\node at (4.5,6) [vertex2]{};
\node at (4.7,6) [vertex2]{};

\node at (-0.2,4) [vertex2]{};
\node at (-0.25,3.75) [vertex2]{};
\node at (-0.2,3.5) [vertex2]{};

\node at (v1) [vertex]{};
\node at (v2) [vertex]{};
\node at (v3) [vertex]{};
\node at (v4) [vertex]{};
\node at (v5) [vertex]{};
\node at (v6) [vertex]{};
\node at (v7) [vertex]{};
\node at (v8) [vertex]{};
\node at (v10) [vertex]{};
\node at (v11) [vertex]{};
\node at (v12) [vertex]{};
\node at (v13) [vertex]{};
\node at (v14) [vertex]{};
\node at (v15) [vertex]{};
\node at (v16) [vertex]{};

\node at (a1) [vertex]{};
\node at (a2) [vertex]{};
\node at (a3) [vertex]{};
\node at (a4) [vertex]{};
\node at (a5) [vertex]{};

\node at (b1) [vertex]{};
\node at (b2) [vertex]{};
\node at (b3) [vertex]{};
\node at (b4) [vertex]{};
\node at (b5) [vertex]{};

\node at (c1) [vertex]{};
\node at (c2) [vertex]{};
\node at (c3) [vertex]{};
\node at (c4) [vertex]{};
\node at (c5) [vertex]{};

\node at (d1) [vertex]{};
\node at (d2) [vertex]{};
\node at (d3) [vertex]{};
\node at (d4) [vertex]{};

\node at (e1) [vertex]{};
\node at (e2) [vertex]{};
\node at (e3) [vertex]{};
\node at (e4) [vertex]{};

\node at (f1) [vertex]{};
\node at (f2) [vertex]{};
\node at (f3) [vertex]{};
\node at (f4) [vertex]{};

\node at (g1) [vertex]{};
\node at (g2) [vertex]{};
\node at (g3) [vertex]{};
\node at (g4) [vertex]{};

\node at (h1) [vertex]{};
\node at (h2) [vertex]{};
\node at (h3) [vertex]{};
\node at (h4) [vertex]{};
 
\node at (i1) [vertex]{};
\node at (i2) [vertex]{};
\node at (i3) [vertex]{};
\node at (i4) [vertex]{};

\draw [ultra thick, ->] (8.5,3.75) -- (9,3.75);


\coordinate (u1) at (13,1);
\coordinate (u2) at (12,2);
\coordinate (u3) at (11.5,2.5);
\coordinate (u4) at (10.5,3.5);
\coordinate (u5) at (10.5,4);
\coordinate (u6) at (11.5,5);
\coordinate (u7) at (12,5.25);
\coordinate (u8) at (13.5,5.25);
\coordinate (u10) at (15.5,5.25);
\coordinate (u11) at (16,5);
\coordinate (u12) at (17,4);
\coordinate (u13) at (17,3.5);
\coordinate (u14) at (16,2.5);
\coordinate (u15) at (15.5,2);
\coordinate (u16) at (14.5,1);

\filldraw [ultra thick] plot [smooth cycle] coordinates {(u1) (12.7,1.7) (u2) (u3) (11.2,3.2) (u4) (u5) (11.2,4.2) (u6) (u7) (u8)  (u10) (u11) (16.3,4.3) (u12) (u13) (16.3,3.2) (u14) (u15) (14.8,1.7) (u16)};

\draw (u1) -- (u2);
\draw (u3) -- (u4);
\draw (u5) -- (u6);

\draw[bend left=20] (u1) edge (u2);
\draw[bend left=40] (u1) edge (u2);
\draw[bend left=20] (u3) edge (u4);
\draw[bend left=40] (u3) edge (u4);
\draw[bend left=20] (u5) edge (u6);
\draw[bend left=40] (u5) edge (u6);

\draw (u11) -- (u12);
\draw (u13) -- (u14);
\draw (u15) -- (u16);

\draw[bend left=20] (u11) edge (u12);
\draw[bend left=20] (u13) edge (u14);
\draw[bend left=20] (u15) edge (u16);

\draw plot [smooth cycle] coordinates {(u7) (11.75,6) (12,6.25) (12.25,6)};
\draw plot [smooth cycle] coordinates {(u8) (13.25,6) (13.5,6.25) (13.75,6)};
\draw plot [smooth cycle] coordinates {(u10) (15.25,6) (15.5,6.25) (15.75,6)};

\node at (17.7,4) [vertex2]{};
\node at (17.75,3.75) [vertex2]{};
\node at (17.7,3.5) [vertex2]{};

\node at (17.7,4) [vertex2]{};
\node at (17.75,3.75) [vertex2]{};
\node at (17.7,3.5) [vertex2]{};

\node at (16.4,1.9) [vertex2]{};
\node at (16.25,1.75) [vertex2]{};
\node at (16.1,1.6) [vertex2]{};

\node at (16.4,1.9) [vertex2]{};
\node at (16.25,1.75) [vertex2]{};
\node at (16.1,1.6) [vertex2]{};

\node at (11.1,1.9) [vertex2]{};
\node at (11.25,1.75) [vertex2]{};
\node at (11.4,1.6) [vertex2]{};

\node at (14.3,6) [vertex2]{};
\node at (14.5,6) [vertex2]{};
\node at (14.7,6) [vertex2]{};

\node at (9.8,4) [vertex2]{};
\node at (9.75,3.75) [vertex2]{};
\node at (9.8,3.5) [vertex2]{};

\node at (u1) [vertex]{};
\node at (u2) [vertex]{};
\node at (u3) [vertex]{};
\node at (u4) [vertex]{};
\node at (u5) [vertex]{};
\node at (u6) [vertex]{};
\node at (u7) [vertex]{};
\node at (u8) [vertex]{};
\node at (u10) [vertex]{};
\node at (u11) [vertex]{};
\node at (u12) [vertex]{};
\node at (u13) [vertex]{};
\node at (u14) [vertex]{};
\node at (u15) [vertex]{};
\node at (u16) [vertex]{};

\end{tikzpicture}
\end{center}
\caption{Illustration of the construction of the multigraph $G^*$.}
\label{fig:G_to_G*}

\end{figure}
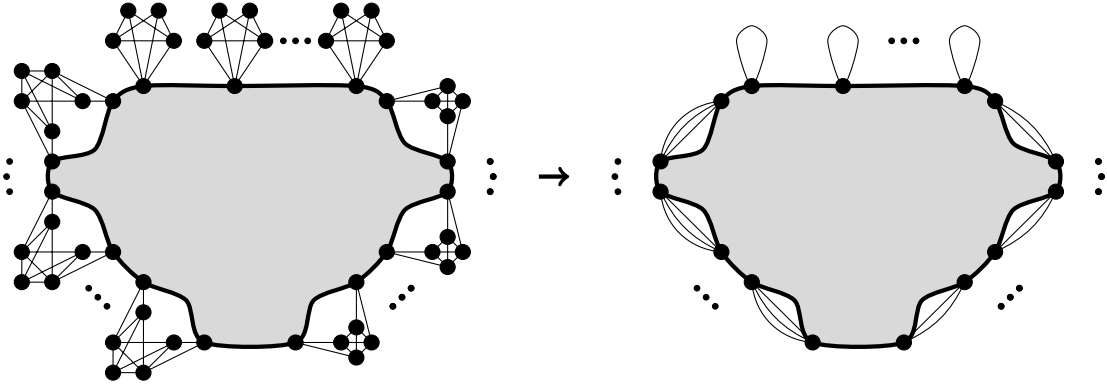

We will consider two cases, when $\delta(G^\ast)=4$ (and hence $G^\ast$ is 4-regular) and when $\delta(G^\ast)=3$.

First, suppose that $\delta(G^\ast)=4$ and $G^\ast\neq R_1$, hence $G^\ast$ is 4-regular and, as it is 3-connected, contains no proper critical sets (see Claim \ref{claim:noproper}). As $G^\ast$ is 4-regular and 3-connected it cannot contain a loop or a triple edge. If $G^\ast$ contains a double edge, then it results from a copy of $T_1$ in $G$; so, applying the same argument as in the proof of Lemma \ref{lem:invXtoT1}, we can perform an inverse X-replacement in $G$, contradicting $G$ being a counterexample.

So, suppose that $\delta(G^\ast)=3$. 
We will show that there is an admissible node $v$ in $G^*$.
Recall that $G^*$ is 3-connected and that any vertex of degree three is not incident with a loop or a multiple edge.

\begin{claim}
We may assume that the multigraph $G^*\in\mathcal{M}\setminus M(2,1)$ constructed from a counterexample $G$ does not contain any non-trivial 3-edge-cutsets.
\end{claim}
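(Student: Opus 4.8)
The plan is to argue by contradiction. Suppose $G^*$ has a non-trivial $3$-edge-cutset $F$ inducing the vertex partition $V(G^*)=A\sqcup B$ with $|A|,|B|\geq 2$; I will show that $G$ is then reducible by one of the sum lemmas, contradicting that $G$ is a counterexample. The first step is a routine count: since $G^*$ is a $(2,1)$-circuit, $|E(G^*)|=2|V(G^*)|$, hence $i_{G^*}(A)+i_{G^*}(B)=2|A|+2|B|-3$; as $A$ and $B$ are proper subsets of a circuit, after relabelling we get $i_{G^*}(A)=2|A|-1$ and $i_{G^*}(B)=2|B|-2$, so $A$ is a proper critical set of $G^*$. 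The real work is then to understand what $F$ can look like and to transfer it down to $G$.

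Next I would pin down the shape of $F$. Since $G^*$ is $3$-connected it is $3$-edge-connected, so $F$ is a minimal edge cut and equals the set of edges between $A$ and $B$; in particular no edge of $F$ is a loop, and if some edge of $F$ is one copy of a multiple edge $uw$ of $G^*$ then all copies of $uw$ lie in $F$ (an uncut copy would keep $u$ and $w$ on the same side). Recalling that the only non-simple features of $G^*$ are loops (from $K_5-e$ sides of cut-vertices of $G$), double edges (from copies of $T_1$) and triple edges (from copies of $T_2$), and that vertices incident with a multiple edge or loop have degree $\geq 4$ while the ends of a triple edge have degree $\geq 5$, this leaves exactly three configurations: (a) three edges of $G$, none a parallel copy; (b) both copies of a double edge $uw$ together with one further simple edge $e_3=x_3y_3$; (c) all three copies of a triple edge $uw$. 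Cases (b) and (c) I would dispose of by the same device: one exhibits a set $S$ of at most two vertices (e.g. $\{u\}$ in (c); $\{u,y_3\}$ in (b), with a singleton if $x_3=u$ or $y_3=w$) meeting every $A$--$B$ edge, so $G^*-S$ is disconnected; this contradicts $3$-connectivity unless a side of the partition collapses to a tiny set, in which case the collapsed side forces a multiple edge whose end has degree smaller than the bound guaranteed by $G^*\in\mathcal{M}$ --- again a contradiction.

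So $F$ consists of three edges of $G$, none a parallel copy. I would then transfer $F$ to $G$: each gadget of $G$ that was collapsed in $G^*$ sits entirely on one side of the partition (its attaching cut-vertex, respectively cut-pair, is not separated by $F$), so expanding $G^*$ back to $G$ yields a non-trivial partition $V(G)=A'\sqcup B'$ with $A\subseteq A'$, $B\subseteq B'$ and the same three crossing edges; since each gadget expansion leaves $|E|-2|V|$ unchanged, $i_G(A')=2|A'|-1$ and $i_G(B')=2|B'|-2$. Repeating the $3$-connectivity argument inside $G^*$ shows the six endpoints of the three cut edges are pairwise distinct (a coincidence again produces a vertex-cut of $G^*$ of size at most $2$, or a triple edge with an end of degree $4$). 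Hence $G$ satisfies condition $(4)$ of the list at the start of Section \ref{sec:sumsec}, so by Lemma \ref{lem:sum4} $G$ is obtained as a sum of two strictly smaller $(2,1)$-circuits, each therefore recursively generated from $\G$, and so is $G$ --- contradicting that $G$ is a counterexample.

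The main obstacle I expect is the case analysis of the previous two paragraphs: ruling out that a cut edge of $F$ is a parallel copy coming from a $T_1$- or $T_2$-gadget, and the parallel task of showing the six endpoints in case (a) are distinct. Both reduce to the single observation that if at most two vertices meet all the crossing edges then $G^*$ has a vertex-cut of size at most two, together with a careful check of the small-side exceptions against the degree constraints built into the definition of $\mathcal{M}$; everything else (the counting identities and the invariance of $|E|-2|V|$ under gadget expansion) is bookkeeping.
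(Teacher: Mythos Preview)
Your case analysis on the shape of the $3$-edge-cutset $F$ in $G^*$ and the transfer to $G$ is careful and correct; in fact it is more detailed than the paper, which dispatches this with the single clause ``as $G^*$ is $3$-connected, $G$ also contains a non-trivial $3$-edge-cutset.'' The genuine gap is in your final sentence. You assert that Lemma~\ref{lem:sum4} decomposes $G$ into two \emph{strictly smaller} $(2,1)$-circuits, but you never check this, and it is not always true. With your notation, $G_{A'}$ has $|A'|+1$ vertices, which is indeed $<|V(G)|$ since $|B'|\ge 2$; but $G_{B'}$ has $|B'|+5$ vertices, and this is $<|V(G)|$ only when $|A'|>5$. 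Since $G$ is simple and $i_G(A')=2|A'|-1$, the case $|A'|=5$ with $G[A']\cong K_5-e$ is not excluded: take $|A|=5$ in $G^*$ with $G^*[A]$ simple and all gadgets lying on the $B$-side (this is consistent with $G^*\in\mathcal{M}\setminus M(2,1)$). In that situation $|G_{B'}|=|V(G)|$, so minimality of the counterexample gives you nothing about $G_{B'}$ and your contradiction evaporates.

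This is exactly the exception the paper's own proof isolates: it explicitly notes that the two circuits produced by Lemma~\ref{lem:sum4} are both non-isomorphic to $G$ \emph{unless one side of the edge-cutset is isomorphic to $K_5-e$}, and then argues that in this residual case one can rechoose the cut so as to exhibit a cut-vertex with a $K_5-e$ side, which should already have been absorbed as a loop in the construction of $G^*$. Your argument needs an analogous clause handling the $K_5-e$ side before you can conclude.
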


\begin{proof}[Proof of Claim]
If $G^\ast$ contains a non-trivial 3-edge-cutset, then, as $G^\ast$ is 3-connected, $G$ also contains a non-trivial 3-edge-cutset. In which case we can apply Lemma \ref{lem:sum4} to form two (2,1)-circuits. These circuits are both nonisomorphic to $G$ (contradicting our assumption that $G$ is a counterexample) unless one side of the edge-cutset is isomorphic to $K_5-e$. Moreover, we can choose the cut-edges so that they are all incident with the same vertex in the copy of $K_5-e$. At this stage we have a cut-vertex $v$ with one side isomorphic to $K_5-e$ and in our construction of $G^\ast$ this is replaced by a loop on $v$.
\end{proof}

Suppose that $G^\ast$ contains a proper critical set. As $G^\ast$ is essentially 4-edge-connected, as long as $G^\ast\not\in\G^\ast$, by Theorem \ref{thm:Hen2admissible} (ii), $G^*$ has an admissible node, and, by construction, this node is also admissible in $G$.

Finally, suppose that $G^\ast$ does not contain a proper critical set. If $G^\ast$ is essentially 5-edge-connected then we may apply Theorem \ref{thm:Hen2admissible} (i).

\begin{claim}
If the $(2,1)$-multi-circuit $G^\ast$ (containing no proper critical sets) constructed from the counter-example $G$ contains a non-trivial 4-edge-cutset, then either one of the sides of the cut is a double edge or, from $G$, we can construct a new counter-example which contains a proper critical set.
\end{claim}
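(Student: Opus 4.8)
The plan is to take a non-trivial $4$-edge-cutset of $G^\ast$, check by counting that it is of the shape handled by Lemma \ref{lem:sum5a}, and then read off the dichotomy from a short analysis of its two sides. Write $A,B$ for the two sides, so $A\cap B=\emptyset$, $A\cup B=V(G^\ast)$, and exactly four edges run between them. As $G^\ast$ is a $(2,1)$-circuit, $|E(G^\ast)|=2|V(G^\ast)|$, hence $i(A)+i(B)=2|A|+2|B|-4$. Non-triviality gives $|A|,|B|\le |V(G^\ast)|-2$, so neither side can be an improper critical set, and since $G^\ast$ has no proper critical set either, $i(A)\le 2|A|-2$ and $i(B)\le 2|B|-2$; comparing with the identity forces $i(A)=2|A|-2=i(B)$, so $G^\ast$ satisfies condition (5) of the list at the start of Section \ref{sec:sumsec}. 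I would also record the useful fact that $G^\ast$ has no loops: a loop at a vertex $w$ would make $\{w\}$ a critical set, proper since $|V(G^\ast)|\ge 4$, contradicting the hypothesis; hence every multi-edge of $G^\ast$ is a double edge (from a $T_1$ in $G$) or a triple edge (from a $T_2$ in $G$).

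Next I would classify each side $I\in\{A,B\}$ by the set $S_I$ of its vertices incident with a cut-edge. Non-triviality gives $|I|\ge 2$, and $3$-connectivity gives $|S_I|\ge 2$: if $|S_I|=1$ the unique such vertex would be a cut-vertex of $G^\ast$, and if $|S_I|=2$ but $|I|\ge 3$ those two vertices would be a cut-pair of $G^\ast$. So either $|S_I|\ge 3$, or $|S_I|=2$ with $I=\{u,v\}$; in the latter case $i(I)=2|I|-2=2$, and as $G^\ast$ has no loops this means $G^\ast[I]$ is a double edge — the first alternative of the claim. From here on assume $|S_A|,|S_B|\ge 3$, so neither side is a double edge.

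Now I would apply Lemma \ref{lem:sum5a} to the cutset, obtaining $(2,1)$-circuits $G^\ast_A$ and $G^\ast_B$; by its part (2), a side that is type-$3$, type-$4$, or of the four-vertex kind gives a circuit with no proper critical set, while a type-$2$ side gives a circuit whose unique proper critical set is $I\cup F_I$. Re-inflate $G^\ast_A,G^\ast_B$ to simple graphs $H_A,H_B$ by undoing the substitutions that built $G^\ast$ from $G$ (each double edge back to $T_1$, each triple edge back to $T_2$; there are no loops to undo). Since the gadget attached by the case-(5) move is simple, the multi-edges of $G^\ast_I$ are exactly those inherited from $G$, so re-inflation is a composition of the sum moves of Lemmas \ref{lem:sum1}, \ref{lem:sum2a}, \ref{lem:sum2b} and \ref{lem:sum3}; hence $H_A,H_B$ are simple $(2,1)$-circuits and $G$ is assembled from them by those moves. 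If neither side is type-$2$ and neither resulting piece is one of the base graphs, then $H_A,H_B$ are strictly smaller simple circuits and $G$ decomposes through genuine sum moves, contradicting that $G$ is a counterexample. So (modulo the base-graph exceptions, which are $\G^\ast$-cases handled elsewhere) some side, say $A$, is type-$2$; then $H_A$ is a simple $(2,1)$-circuit, still a counterexample (else $G$ decomposes), in which the re-inflation of $A\cup F_A$ is a proper critical set — re-inflating a $T_1$ or $T_2$ changes $i(X)$ and $|X|$, for any $X$ containing it, by amounts that cancel in $i(X)-2|X|$, so criticality and properness are preserved. That is the second alternative.

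The step I expect to be the main obstacle is the last paragraph: keeping the bookkeeping straight when a $T_1$ or $T_2$ of $G$ straddles the cut of $G^\ast$ (so that the cut-edges of $G^\ast$ must be re-interpreted as cut-edges of $G$ with a possibly different crossing, and condition (5) re-checked for $G$ itself), and pinning down exactly which configurations of Lemma \ref{lem:sum5a} fail to reduce — the small-side cases that produce base graphs, and the type-$2$ case that produces the proper critical set. The connectivity arguments in the first two paragraphs are clean, but extracting the second alternative cleanly in every remaining subcase, with the explicit descriptions of $T_1$, $T_2$ and $K_5-e$, is where the real work lies.
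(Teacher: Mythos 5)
Your first two paragraphs are sound, and in places cleaner than the paper: the count forcing $i(A)=2|A|-2=i(B)$, the exclusion of loops via the no-proper-critical-set hypothesis, and the observation that $3$-connectivity forces each side either to have at least three cut-edge endpoints or to consist of exactly two vertices (hence a double edge) isolate the first alternative of the claim correctly; the paper leaves this dichotomy largely implicit.

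The gap is in your third paragraph, and it is the one you flagged yourself. First, your criterion for when the decomposition makes progress is wrong: from ``neither side is type-2 and neither resulting piece is one of the base graphs'' it does not follow that $H_A,H_B$ are strictly smaller than $G$. The piece $G_I$ has $|I|+4$ (or $|I|+1$) vertices, so it fails to be smaller precisely when the side being replaced has four vertices and $2\cdot4-2=6$ induced edges, i.e.\ is a $K_4$; being a base graph is irrelevant, and ``type'' is a choice of the attaching set $\F_1$, not a property of a side, so ``some side is type-2'' is not a conclusion you can draw. The paper identifies the obstruction as exactly ``one or both sides of the cutset is isomorphic to $K_4$'', and it is in that case that one deliberately re-applies Lemma \ref{lem:sum5a} with a type-2 attachment so that the resulting same-size circuit acquires the proper critical set of the second alternative (the both-sides-$K_4$ case giving $S_1$ or a sum of two copies of $S_1$). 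Second, applying Lemma \ref{lem:sum5a} to $G^*$ and then ``re-inflating'' does not produce a decomposition of $G$ whenever one of the four cut edges is itself a double edge, i.e.\ a $T_1$ of $G$ straddling the cut (which is compatible with $|S_A|,|S_B|\geq 3$): the $K_4$ inside that $T_1$ must be assigned to one side, and the four cut edges of $G$ are then not the re-inflations of the four cut edges of $G^*$. The paper avoids this entirely by first translating the cutset into a genuine non-trivial $4$-edge-cutset of $G$ (its three bullet cases, which re-route the cut through the straddling $T_1$'s) and applying Lemma \ref{lem:sum5a} to $G$ itself; without that translation step, or an equally careful substitute, your argument does not close.
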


\begin{proof}[Proof of Claim]
We know that $G^\ast$ is essentially 4-edge-connected. 
If $G^\ast$ contains a non-trivial 4-edge-cutset, then:
\begin{itemize}
\item if a non-trivial 4-edge-cutset is formed from four single edges, then these edges form a non-trivial 4-edge-cutset in $G$;
\item if a non-trivial 4-edge-cutset is formed from two double edges, let the endvertices of one double edge be $x_A$ and $y_A$ and the other be $x_B$ and $y_B$, these double edges resulted from two copies of $T_1$, $T_A$ and $T_B$ say, in $G$ where the degree two vertices of $T_A$ (respectively $T_B$) are $x_A$ and $y_A$ (respectively $x_B$ and $y_B$). Therefore there is a non-trivial 4-edge-cutset in $G$ consisting of the edges incident with $x_A$ and $x_B$ that are contained in $T_A$ and $T_B$.
\item if a non-trivial 4-edge-cutset is formed from two single edges and a double edge between vertices $x$ and $y$ say, then the two single edges combined with the two edges incident with $x$ in the copy of $T_1$ that results in the double edge form a non-trivial 4-edge-cutset.
\end{itemize}
So, suppose that $G^\ast$ contains a non-trivial 4-edge-cutset. As $G^\ast$ contains no proper critical sets, $G$ contains no proper critical sets. Hence we could have applied Lemma \ref{lem:sum5a} to $G$ and reduced to two $(2,1)$-circuits. These circuits are non-isomorphic to $G$ unless one or both sides of the cutset is isomorphic to $K_4$. If both sides are isomorphic to $K_4$ we either have $G=S_1\in\G$ or a graph that we can construct by summing $S_1$ with itself using Lemma \ref{lem:sum5a}. 

If only one side is isomorphic to $K_4$ a further application of Lemma \ref{lem:sum5a} guarantees that the $K_4$ contains a proper two vertex cut, $\{x,y\}$ say, this yields a proper critical set. 
\end{proof}

We may assume that, in $G^\ast$, if it contains a non-trivial 4-edge cutset then one of the sides of the cut is a double edge.

For any such 4-edge-cut in $G^\ast$ replace the two vertices forming one component with a single vertex $z$, call the resulting graph $G^{\ast\ast}$. This either results in  two vertices with four edges between them (which is in $\G^\ast$) or $z$ is a degree four vertex with either three or four neighbours (as $G^{\ast\ast}$ is 3-connected). If $z$ has four neighbours, then we have reduced to a case with one less non-trivial 4-edge-cutset. 

So assume that $z$ has three neighbours, say $u$ is the neighbour with the double edge, then the degree of $u$ is greater than three (otherwise we contradict the fact $G^\ast$ was essentially 4-edge connected). If $u$ has degree four then repeat the above over the double edge between $u$ and $z$, which is one side of a non-trivial 4-edge cutset. Hence we may assume that the degree of $u$ is at least $5$. 
In this manner we can eliminate all non-trivial 4-edge cutsets, thus we may assume that $G^{\ast\ast}$ is essentially 5-edge-connected.
If $G^{\ast\ast}\not\in\G^\ast$, then Theorem \ref{thm:Hen2admissible} (i) implies that $G^{\ast\ast}$ contains an admissible node, and, by construction, this node is also admissible in $G$.

All that remains is to consider the (2,1)-circuits $R_i$, where $0\leq i\leq 12$. First note that, as they would reduce to $R_0$, neither $R_9$ or $R_{10}$ will arise as the multigraph generated from a minimal counter example. The graphs that would yield $R_2$, $R_8$ and $R_{11}$ all contain admissible nodes. The graphs that would yield $R_1$, $R_6$, $R_7$ and $R_{12}$ all contain admissible vertices (at which an $X$-replacement can be performed). The graphs that yield $R_0$ are either isomorphic to $S_5$ or can be reduced to copies of $S_5$ through an application of Lemma \ref{lem:sum1}. 
Any graph that generates $R_3$ contains a cut pair to which Lemma \ref{lem:sum3} can be applied to yield two (2,1)-circuits, each on fewer vertices. 
Any graph that generates  $R_4$ contains a cut pair to which Lemma \ref{lem:sum3} can be applied to yield two new circuits, $G_1$ and $G_2$ say, where, without loss of generality, $G_1$ has an admissible vertex and Lemma \ref{lem:sum3} can be applied to $G_2$ to yield two (2,1)-circuits that both have admissible nodes. Finally, any graph that generates $R_5$ contains an admissible vertex.
\end{proof}

\section{Concluding remarks}

\noindent 1. We expect that our characterisation will be useful for the problem of characterising the global rigidity of realisations of graphs\footnote{A graph is realised using a map $p:V\rightarrow \mathbb{R}^3$, such a realisation is globally rigid if any other map $q$ inducing the same edge lengths is congruent to $p$. A realisation on a surface is a realisation in $\mathbb{R}^3$ in which each vertex is restricted to move only on the surface it lies on.} on surfaces of revolution (such as the cone). 
In \cite{NOP2}, rigidity of such frameworks was, generically, shown to be equivalent to the graph being $(2,1)$-tight. In \cite[Conjecture 1]{JMN} it was conjectured that the graphs which are generically globally rigid on the cone are those which are 2-connected and contain a spanning subgraph which is $(2,1)$-tight. It is not hard to prove that the graphs with $|E|=2|V|$ that are 2-connected with a spanning $(2,1)$-tight subgraph are exactly the $(2,1)$-circuits. Thus the constructive characterisation of $(2,1)$-circuits in this paper is likely to be useful in developing a combinatorial characterisation of global rigidity. See also \cite{JNstress,Nix} for analogous results for the cylinder, additional results in this direction and warnings of the additional complication for the cone.

\noindent 2.  In \cite{B&J} it was proved that all 3-connected $(2,3)$-circuits can be generated from $K_4$ by 1-extensions. (This is non-trivial since admissible nodes need not result in smaller circuits which are 3-connected even when the original circuit is 3-connected.) For $(2,1)$-circuits we now briefly comment on such extensions. It is easy to see that if $G$ is 2-connected and $v$ is an admissible node in $G$ then the resulting $(2,1)$-circuit $G'$ is 2-connected. Moreover it is easy to check that the same holds for inverse $X$-replacement and for our sum moves (clearly we do not consider the sum move for a graph with a cut-vertex). Thus we instantly have the following result. Let $\G^2$ denote all the graphs in $\G$ which are 2-connected.

\begin{cor}\label{cor:2concircuits}
Let $G=(V,E)$ be a simple graph. Then $G$ is a 2-connected $(2,1)$-circuit if and only if $G$ can be generated from some graphs in $\G^2$ by 1-extensions, $X$-replacements and sum moves.
\end{cor}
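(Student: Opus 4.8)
The plan is to bootstrap off Theorem \ref{thm:mainresult}, using only the elementary fact that every operation employed in its proof, with the single exception of the cut-vertex sum of Lemma \ref{lem:sum1}, preserves $2$-connectivity in both directions. I would first isolate this as a key lemma: if $G$ is a $2$-connected $(2,1)$-circuit then (a) any $(2,1)$-circuit obtained from $G$ by an admissible $1$-reduction or an admissible inverse $X$-replacement is again $2$-connected, and (b) any two $(2,1)$-circuits obtained from $G$ by one of the sum moves of Lemmas \ref{lem:sum2a}, \ref{lem:sum2b}, \ref{lem:sum3}, \ref{lem:sum4}, \ref{lem:sum5a} are again $2$-connected; conversely, applying (the forward form of) a $1$-extension, an $X$-replacement, or one of these sum moves to $2$-connected $(2,1)$-circuits yields a $2$-connected $(2,1)$-circuit.

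For the ``if'' direction I would argue as follows. Every graph of $\G^2$ is a $2$-connected $(2,1)$-circuit by definition, and Lemma \ref{lem:forwardh2x} together with Lemmas \ref{lem:sum2a}--\ref{lem:sum5a} shows that $1$-extensions, $X$-replacements, and the (non-cut-vertex) sum moves send $(2,1)$-circuits to $(2,1)$-circuits; combined with the forward half of the key lemma this gives, by induction on the length of the generating sequence, that anything built from $\G^2$ by these operations is a $2$-connected $(2,1)$-circuit.

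For the ``only if'' direction I would run the reduction scheme from the proof of Theorem \ref{thm:mainresult} on a $2$-connected $(2,1)$-circuit $G$, by induction on $|V(G)|$. Since $G$ has no cut-vertex, the cut-vertex sum (Lemma \ref{lem:sum1}), and hence every configuration in that proof that forces a base graph with a cut-vertex (such as $S_5$, or the multigraph $R_0$), never arises; so at each stage the current graph is either a base graph of $\G$ --- which, being $2$-connected, lies in $\G^2$ --- or it admits an admissible $1$-reduction, an admissible inverse $X$-replacement, or one of the sum moves (2a)--(5). By the key lemma each such step produces $2$-connected $(2,1)$-circuit(s), which are either strictly smaller than $G$ or, in the finitely many borderline configurations already treated in the proof of Theorem \ref{thm:mainresult}, reduce in a bounded number of further such steps; the induction hypothesis then expresses each piece as built from $\G^2$, and reversing the step expresses $G$ the same way.

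The genuinely new work is confined to the key lemma, and it is routine casework. For (a): the vertex added back by a $1$-extension has degree three and the one added by an $X$-replacement has degree four, so neither can be a cut-vertex of the enlarged graph, while deleting one of the original vertices removes at most two edges from a graph that is $2$-edge-connected (Lemma \ref{lem:connected}), and the new low-degree vertex reconnects the resulting pieces. For (b): a cut-vertex $w$ of a summand $G_A$ that is not one of the gadget vertices or the cut vertices $x,y$ would leave a component of $G_A-w$ lying entirely inside $A\setminus\{x,y\}$, and that same set would be separated from the rest of $G$ by $w$, contradicting $2$-connectivity of $G$; the cases $w\in\{x,y\}$ and the forward direction are handled similarly, using that each summand and each gadget attaches to $\{x,y\}$ (or to the relevant three or four cut vertices) through at least two edges. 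I expect this casework, together with the bookkeeping of the handful of borderline configurations inherited from the proof of Theorem \ref{thm:mainresult}, to be the only mildly delicate point; everything else is immediate.
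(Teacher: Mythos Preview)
Your proposal is correct and follows essentially the same approach as the paper. The paper's proof is the single paragraph immediately preceding the corollary: it asserts (without the detailed casework you outline) that admissible $1$-reductions, inverse $X$-replacements, and the non-cut-vertex sum moves all preserve $2$-connectivity, and then states that the corollary follows ``instantly'' from Theorem~\ref{thm:mainresult}; your plan is precisely an expanded version of this, with the key lemma made explicit and the borderline cases from the proof of Theorem~\ref{thm:mainresult} tracked.
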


However we do not know a corresponding statement for 3-connected $(2,1)$-circuits. Indeed Figure \ref{fig:3concounterex} illustrates a $(2,1)$-circuit which is 3-connected and essentially 4-edge-connected for which each admissible 1-reduction results in a $(2,1)$-circuit which is not 3-connected.

\begin{figure}
\begin{center}
\begin{tikzpicture}[scale=1, vertex/.style={circle,inner sep=2,fill=black,draw}, vertex2/.style={circle,inner sep=4,fill=black,draw}]

\coordinate (v1) at (0,0);
\coordinate (v2) at (1,1);
\coordinate (v3) at (2,0);
\coordinate (v4) at (1,-1);
\coordinate (v5) at (-2,0);
\coordinate (v6) at (-1,1);
\coordinate (v7) at (-1,-1);
\coordinate (v8) at (0,2);
\coordinate (v9) at (-2,-1);
\coordinate (v10) at (0,3);

\node at (v1) [vertex]{};
\node at (v2) [vertex]{};
\node at (v3) [vertex]{};
\node at (v4) [vertex]{};
\node at (v5) [vertex]{};
\node at (v6) [vertex]{};
\node at (v7) [vertex]{};
\node at (v8) [vertex]{};
\node at (v9) [vertex]{};
\node at (v10) [vertex]{};

\draw (v1) -- (v2);
\draw (v1) -- (v3);
\draw (v1) -- (v4);
\draw (v1) -- (v5);
\draw (v1) -- (v6);
\draw (v1) -- (v7);
\draw (v2) -- (v3);
\draw (v2) -- (v4);
\draw (v2) -- (v8);
\draw (v3) -- (v4);
\draw (v5) -- (v6);
\draw (v5) -- (v7);
\draw (v5) -- (v8);
\draw (v5) -- (v9);
\draw (v6) -- (v7);
\draw (v6) -- (v8);
\draw (v6) -- (v9);
\draw (v7) -- (v9);
\draw (v8) -- (v10);
\draw (v5) -- (v10);
\draw (v3) -- (v10);

%

\end{tikzpicture}
\end{center}
\caption{A 3-connected $(2,1)$-circuit where no degree 4 is admissible, and every admissible node results in a $(2,1)$-circuit which is not 3-connected.}
\label{fig:3concounterex}
\end{figure}
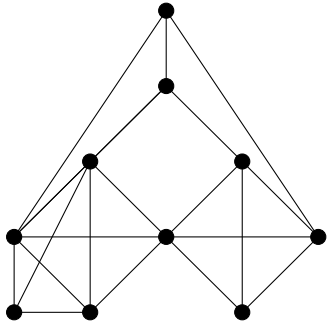

\end{document}